\documentclass[11pt,reqno, twoside]{article}
\usepackage{mathrsfs}
\usepackage{amsmath,amssymb}
\usepackage{amsthm,bm}

\textheight 230mm \textwidth 160mm \topmargin -8mm
\oddsidemargin 3mm \evensidemargin 3mm
\usepackage[colorlinks,linkcolor=red,anchorcolor=blue,citecolor=green]{hyperref}
\usepackage{graphics,graphicx}
\usepackage{cleveref}
\usepackage{color}
\usepackage{enumerate}
\usepackage[title]{appendix}
\usepackage{subfigure}
\usepackage{caption}
\usepackage{enumitem}
\usepackage{cite}
\setitemize[1]{itemsep=3pt,partopsep=0pt,parsep=\parskip,topsep=3pt}

\usepackage{todonotes}
\allowdisplaybreaks

\usepackage{tikz}
\usepackage{indentfirst}
\usepackage{titling}

\newtheorem{theorem}{Theorem}[section]

\newtheorem{lemma}[theorem]{Lemma}
\newtheorem{corollary}[theorem]{Corollary}
\newtheorem{proposition}[theorem]{Proposition}
\numberwithin{equation}{section}
\newtheorem{definition}[theorem]{Definition}
\theoremstyle{definition}
\newtheorem{remark}[theorem]{Remark}
\newtheorem{example}[theorem]{Example}
\newcommand{\runum}[1]{\romannumeral #1}

\setcounter{footnote}{-1}

\def\R{{\mathbb{R}}}
\def\E{{\mathbb{E}}}
\def\P{{\mathbb{P}}}

\def\Z{{\mathbb{Z}}}
\def\N{{\mathbb{N}}}

\def\T{{\mathbb{T}}}
\def\d{{\mathrm{d}}}
\def\e{{\mathrm{e}}}
\def\X{{\mathcal{X}}}

\newcommand{\supp}{\operatorname{supp}}
\newcommand{\leb}{\operatorname{Leb}}

\title{Asymptotic stability and mean ergodicity of Feller processes on Polish spaces}
 \author{ \bf{\  Ziyu Liu$^a$,\   Jiehao Wan$^{b,*}$} \thanks{$^*$Corresponding author. Email:\; ziyu@ustb.edu.cn(Ziyu Liu), wanjiehao@stu.pku.edu.cn(Jiehao Wan).} \vspace{1mm}\\   
 \small \it  $^a$School of Mathematics and Physics, University of Science and Technology Beijing, 100083, Beijing, China\\
\small \it $^b$LMAM, School of Mathematical Science, Peking University, 100871, Beijing, China
 }
\date{\today}

\begin{document}
\maketitle
\begin{abstract}
This article establishes several necessary and sufficient criteria on asymptotic stability and mean ergodicity in various types of topologies for Feller processes taking values in Polish spaces. In particular, asymptotic stability and mean ergodicity in Wasserstein distance and weighted total variation distance are considered. The characterizations are formulated by using the notions of generalized eventual continuity properties and lower bound conditions, where the proofs invoke the coupling approach.
\vspace{5mm}

\noindent{\bf Keywords:} Feller process; mixing; eventual continuity; lower bound conditions.

\vspace{5mm}
\noindent{\bf 2020 MR Subject Classification:} 60J25; 37A30
\end{abstract}

\tableofcontents

\section{Introduction}\label{Sec:Intro}
\renewcommand{\thethm}{\Alph{thm}}

Ergodic theory for stochastic dynamic systems is one of the fundamental topics in probability theory and plays a crucial role in understanding the long-term behavior of stochastic processes.
It has achieved fruitful theoretical results and found important applications in fields such as statistical mechanics, chemistry and biology. 
With the increasing expansion and deepening of its applications, for example, in the statistical theory of fluid mechanics (see \cite{HairerMattingly2006,BBPS2022a}), stochastic quantization (see \cite{RZZ2017,Tsatsoulis2018Weber,CCL2024})  and mixing properties for wave propagation in random media (see \cite{GMR2021,BFZ2023,LWX+2024}), it has also raised more theoretical issues that require further development in the ergodic theory of Markov processes.
Notions such as asymptotically strong Feller property, e-property, eventual continuity and approaches such as coupling, generalized coupling and functional inequality have emerged as the theory develops. On these related topics, readers may consult \cite{DaPratoZabczyk1996,MeynTweedie2009,Bakry2014,DMPS2018,Chen2005,Wang2005,GongWu2000,GongWu2006,LiuLiu2025} for more details. 

Among those topics, asymptotic stability and mean ergodicity, which refer to the transition probability kernels or their Ces\`aro averages converge toward the unique invariant probability measure under some given topology, stand as fundamental concepts for characterizing the asymptotic behavior of the process's distributions. Recently, in \cite{GLLL2024} Gong, Liu, Liu and the first author of this paper obtained equivalent characterizations of asymptotic stability and mean ergodicity in the sense of weak convergence for Markov-Feller semigroups in terms of (Ces\`aro) eventual continuity (EvC for brevity) and lower bound conditions (LBC for brevity), which are usually verifiable in specific models. In particular, eventual continuity, as a regularity of semigroups, is weaker than the well-known strong Feller property and e-property, thus easier to verify.

However, in the investigation of practical models and specific equations, apart from the ergodic behavior in the sense of weak convergence, researchers are also concerned with  the long-time convergence of processes under stronger topologies, which can provide richer information about the convergence of processes;  see e.g. \cite{Shao2015a,HairerMattingly2011b}. Therefore, a natural question arises: can we provide equivalent characterizations of asymptotic stability and mean ergodicity of Markov processes under stronger topologies? This will not only reveal deeper mathematical structures of the ergodicity of Markov processes but also provide more verifiable criteria for further applications in complex models.
This paper focuses on this question and derives equivalent characterizations of asymptotic stability and mean ergodicity in terms of generalized eventual continuity and lower bound conditions.

\subsection{Overview of main results}

Throughout this paper, we consider a Markov family $\left(\Omega, \mathcal{F}, \{\Phi_t\}_{t \ge 0}, \{\mathcal{F}_t\}_{t \ge 0}, \{\P^{x}\}_{x \in \X}\right)$, where $\Phi=\{\Phi_t\}_{t \ge 0}$ is a Feller process, right-continuous with left-hand limits, taking values in a Polish space $(\X, \rho)$. Denote by $\mathcal{P}(\X)$ the space of probability measures on $\X$, and let $\{P_t(\cdot, \cdot)\}_{t \ge 0}$ denote the transition kernels of $\Phi$. This paper investigates the \emph{asymptotic stability} and \emph{mean ergodicity} of $\Phi$ within a topological space $(\mathcal{P}(\X), \tau)$, as defined below.

\begin{definition}\label{def:AS_ME_tau}
Let $(\mathcal{P}(\X), \tau)$ be a topological space, where $\tau$ is a topology on $\mathcal{P}(\X)$.
    \begin{enumerate}[label=(\roman*)]
        \item $\Phi$ is called \textit{$\tau$-asymptotically stable} if there exists a unique invariant measure $\mu \in \mathcal{P}(\X)$ such that:
        \begin{equation*}
            \lim_{t \to \infty} P_t(x, \cdot) = \mu \quad \text{in } (\mathcal{P}(\X), \tau), \quad \forall\, x \in \X.
        \end{equation*}
        \item $\Phi$ is called \textit{$\tau$-mean ergodic} if there exists a unique invariant measure $\mu \in \mathcal{P}(\X)$ such that:
        \begin{equation*}
            \lim_{t \to \infty} \frac{1}{t} \int_0^t P_s(x, \cdot) \d s = \mu \quad \text{in } (\mathcal{P}(\X), \tau), \quad \forall\, x \in \X.
        \end{equation*}
    \end{enumerate}
\end{definition}

\vspace{2mm}

The main results of the present paper can be roughly described as follows. For a broad class of topologies $\tau$ on $\mathcal{P}(\X)$, there exists an associated family of test functions $\mathfrak{F}$ such that the following equivalent characterizations hold:
\begin{enumerate}[label=(\roman*)]
    \item 
    
    $\tau\text{-asymptotic stability}\iff \mathfrak{F}\text{-EvC + LBC \eqref{eq:LBC_P} }$;
    \item 
    $\tau\text{-mean ergodicity}\iff \mathfrak{F}\text{-Ces\`aro EvC + LBC \eqref{eq:LBC_Q}} $.
\end{enumerate} 
Here, LBC \eqref{eq:LBC_P} and LBC \eqref{eq:LBC_Q} refer to the lower bound conditions defined in Definition \ref{def:LBC}. $\mathfrak{F}$-(Ces\`aro) EvC refers to  generalized (Ces\`aro) eventually continuity  defined by test function $\mathfrak{F}$ (see Definition \ref{def:F_EvC}, \ref{def:F_Q_EvC}, \ref{def:F_uEvC} and \ref{def:F_Q_uEvC}). \vspace{2mm}

The precise statements are presented in Theorem~\ref{thm:f_mixing}, Theorem~\ref{thm:Q_f_mixing}, Theorem~\ref{thm:f_uniform_mixing}  and Theorem~\ref{thm:Q_f_uniform_mixing}. Important examples of topologies $\tau$ to which our main results apply include:
\begin{enumerate}[label=(\roman*)]
    \item the weak topology (e.g., \cite[Theorem 1]{GLLL2025}, \cite[Theorem 3.12]{GLLL2024});
    \item the topology induced by the Wasserstein distance (Theorem \ref{thm:W_mixing});
    \item the topology induced by the weighted total variation distance (Theorem \ref{thm:tv_mixing} and Theorem \ref{thm: V_mixing}).
\end{enumerate}

  Our strategy involves characterizing the topology $\tau$ through a family of test functions $\mathfrak{F}$ (see, for example, Lemma \ref{lem:equi_characterization_ofWasserstein_distance} and Kantorovich-Rubinstein duality principle in \cite[Theorem 5.1]{Villani2009}). A key difficulty arises when these test functions are unbounded, as the integrals may become ill-defined and the bounded convergence theorem is no longer applicable. To overcome this, we impose a uniform integrability condition to compensate for the lack of boundedness and integrability (see Hypotheses $\mathbf{(H_1)}$ and $\mathbf{(H_2)}$).
   
The proofs of our main results are based on a coupling approach. This method shares the same spirit in \cite{KulikScheutzow2015} and differs from the traditional proof originating from \cite{Lasota1977} and \cite{GLLL2025} based on contradiction. It is more applicable in the case of unbounded test functions since more probability tools and estimations can be used. The detailed arguments are developed in the subsequent sections.

    \subsection{Literature review }
    
    The existence and uniqueness of invariant measures and stability properties constitute basic problems in the ergodic theory of Feller processes.
    In general, the existence of invariant measures for Feller processes is typically established via Krylov-Bogoliubov theorem (see, e.g. \cite[Theorem 3.1.1]{DaPratoZabczyk1996}) and the key of the theorem is tightness of a family of the distributions evolving along with time. Concentration condition (see, e.g. \cite[Proposition 3.1]{LasotaSzarek2006}), built upon this theorem, aim to obtain existence of invariant measures by demonstrating that the mass of the process are concentrated over some compact set for all sufficiently large times. In locally compact space, this 
    can be easily established via Lyapunov functions. However, in non-locally compact spaces, identifying such a compact set becomes considerably more challenging.  A significant breakthrough in this direction emerged in 2006, when Lasota and Szarek developed lower bound techniques for Markov--Feller semigroups with the e-property to derive a sufficient condition for existence of invariant measures that is often more verifiable in infinite-dimensional settings (e.g., again through Lyapunov functions); see \cite{LasotaSzarek2006,Szarek2006}.

    Regarding uniqueness, in 1948, Doob developed the so called Doob's theorem to derive the uniqueness of invariant measure and asymptomatic stability in total variation distance. In 1960, Khas'minskii introduced the property of \emph{strong Feller}, using it to verify the regularity conditions required in Doob's theorem. Actually, strong Feller processes satisfy the ergodic measures are disjointly supported (EMDS for brevity) property. The EMDS property with suitable irreducibility conditions imply uniqueness of invariant measures. This strategy is now termed the Doob-Khas’minskii method. While the strong Feller property has successfully established unique ergodicity in many models, it often fails for stochastic differential equations with degenerate noise or when the state space of the process is infinite-dimensional; see e.g. the discussion in \cite[Example 3.15]{HairerMattingly2006}.
    In 2006, Hairer and Mattingly introduced a weaker regularity condition termed \emph{asymptotically strong Feller} in \cite{HairerMattingly2006}, successfully applying it to establish the uniqueness of invariant measure for two-dimensional stochastic Navier-Stokes equation on torus with highly degenerate additive Gaussian noise. A central component of their proof relies on the fact that asymptotically strong Feller processes possess the EMDS property. While the EMDS property is pivotal for proving the uniqueness of invariant measure, the asymptotically strong Feller condition is not necessary for deriving this property. For instance, processes with the e-property also exhibit the EMDS property; see, e.g., \cite[Theorem 1]{KSS2012} and \cite[Theorem 7.2.9]{Zaharopol2014}.

    Notably, the e-property lies at the intersection of the ingredients in proving both the existence and uniqueness of invariant measures, underscoring its significance for Markov processes. Extensive research has been conducted in this domain, including \cite{LasotaSzarek2006, Szarek2006,SSU2010,KPS2010,Worm2010,SzarekWorm2012}. This framework yields further extensions regarding asymptotic stability and mean ergodicity of Feller processes. In Worm's PhD thesis \cite{Worm2010}, it was established that for a Feller process $\Phi$ with the e-property, satisfying the lower bound condition \eqref{eq:LBC_Q} at some $z \in \X$ is equivalent to mean ergodicity under the weak topology. Subsequent developments by Szarek and Worm in \cite{SzarekWorm2012} demonstrated that for Feller process $\Phi$ with e-property, satisfying the condition \eqref{eq:LBC_P} at some $z \in \X$ characterizes asymptotic stability in weak topologies equivalently.
    
    However, the existence of Feller processes without the e-property motivates the study of such processes under weaker regularity conditions. Jaroszewska introduced asymptotic equicontinuity in \cite{Jaroszewska2013b} while Gong and Liu introduced eventual continuity in \cite{GongLiu2015}. These two concepts were proposed independently almost at the same time and are mathematically equivalent  which are strictly weaker than e-property (see \cite{LiuLiu2024} for an explicit counterexample). 
    Recent advances in this topic include works such as \cite{GLLL2025,GLLL2024,Liu2023,LiuLiu2024,LiLiu2025}. Readers may consult survey paper \cite{LiuLiu2025} for a quick overview.
    A recent development, as mentioned at the beginning of this paper is the equivalent characterization of asymptotic stability and mean ergodicity in weak topology in terms of (Ces\`aro) eventual continuity and the lower bound condition \eqref{eq:LBC_P} or \eqref{eq:LBC_Q}. 

    While progress has been made in understanding ergodicity under weak topologies with weak regularity conditions, the theory under stronger topologies remains less developed. The work presented in this paper represents a preliminary attempt to understand this question.

\vspace{3mm}

\subsection{Organization of the paper}
This paper is organized as follows. Section \ref{Sec: MainResult} presents the main results. Applications and examples are provided in Section \ref{Sec:3}, which include:
\begin{itemize}
    \item Equivalent characterizations of asymptotic stability in the Wasserstein distance and that for the stochastic Navier-Stokes equation with multiplicative noise on a bounded domain;
\item Equivalent characterizations of asymptotic stability and mean ergodicity in the (weighted) total variation distance and that for an explicit iterated function system lacking the e-property;
\item Comparison and extension of our main results to Feller processes with generalized e-property.
\end{itemize}

In Section \ref{Sec:Further discussions}, we discuss the necessity of the two conditions on the test function family $\mathfrak{F}$ in our main theorems, supported by two counterexamples.

Finally, Section \ref{Sec:4} details the proof strategies and technical proofs of the main theorems. Useful tools for the proofs, including properties of uniformly integrable random variables and a generalized Birkhoff's theorem, are provided in the Appendix.

\subsection{Notation}
Throughout this paper, we use $(\X,\rho)$ to denote the Polish space (i.e., a complete separable metric space endowed with metric $\rho$) and its Borel $\sigma$-algebra is denoted by $\mathcal{B}(\X)$.
 We define:\vspace{2mm}

$\begin{aligned}
	 \mathcal{P}(\X)&=\text{the family of probability measures on } \X,\\
    B(\X)&=\text{the family of Borel real-valued measurable functions on }\X,\\
    B_b(\X)&=\text{the family of bounded, Borel real-valued measurable functions on }\X,\\
    C(\X)&=\text{the family of continuous functions on }\X,\\
    C_c(\X)&=\text{the family of bounded continuous functions with compact support},\\
	L_b(\X)&=\text{the family of  bounded Lipschitz continuous functions, with the Lipschitz constant}\\
    &\quad\;\text{given by } \|f\|_{Lip}=\sup_{x,y\in \X,x\neq y}\,\frac{|f(x)-f(y)|}{\rho(x,y)},\\
    L^1(\mu)& =  \text{the family of integrable functions on } \X \text{ with respect to }\mu\in\mathcal{P}(\X),\\
    B(x,r)&=\{y\in \X:\rho(x,y)<r\} \text{ for } x\in \X \text{ and }r>0,\\
	\supp\mu&=\{x\in \X:\mu(B(x,\varepsilon))>0 \text{ for any } \varepsilon>0\}, \text{ for }\mu\in\mathcal{P}(\X), \\               &\quad\text{ i.e. the topological  support of the measure } \mu,\\
    \chi_{A}&= \text{indicator function of $A$},\\
        \N,\Z,\R,\R_+&= \text{natural numbers, integers, real numbers, nonnegative numbers, respectively}. 
	\end{aligned}$	
    
    \vspace{1mm}
    For brevity, we use the notation $\langle f,\mu\rangle=\int_{\X}f(x)\mu(\d x)$  for $f\in B(\X)$ and $\mu\in\mathcal{P}(\X)$ such that $\langle|f|,\mu\rangle<\infty$. For a random variable $\xi$, we denote its law by $\mathcal{D}(\xi)$.

\section{Main results}\label{Sec: MainResult}
 In this section, we present the main results of this paper. To begin with, let us introduce the following notions.  Recall that $\Phi$ denotes a Feller process on a Polish space $(\X,\rho)$. We denote $\{P_t\}_{t\geq 0}$ and $\{P_t^*\}_{t\geq 0}$ by its associated semigroups acting on $B_b(\X)$ and $\mathcal{P}(\X)$, respectively, given by
\begin{equation*}
     P_tf(x)=\int_{\X}f(y)P_t(x,\d y), \quad P_t^*\mu(A)=\int_\X P_t(x,A)\mu(\d x),
     \quad \forall\, f\in B_b(\X),\ \mu\in\mathcal{P}(\X).
 \end{equation*}

To accommodate a broader class of topologies as motivated in the introduction, we extend the domain of $\{P_t\}_{t\ge0}$ as follows. For each $t \ge 0$, define
\begin{equation*}
    \mathcal{A}_t=\left\{f\in B(\X):\int_\X|f(y)|P_t(x,\d y)<\infty,\ \forall\, x\in\X
\right\}, \quad \text{and set} \quad \mathcal{A}=\bigcap_{t\ge0}\mathcal{A}_t.
\end{equation*}
The semigroup $\{P_t\}_{t\ge0}$ can then be naturally extended to $\mathcal{A}$ by the same formula:
 \begin{equation*}
     P_tf(x)=\int_{\X}f(y)P_t(x,\d y), \quad \forall\, f\in\mathcal{A}.
 \end{equation*}
The Ces\`aro averages of the Markov semigroups and transition probabilities are denoted, respectively, by
\begin{equation*}
    Q_tf(x)=\frac{1}{t}\int_{0}^{t}P_sf(x)\d s, \quad Q_t(x,\cdot)=\frac{1}{t}\int_{0}^{t}P_s(x,\cdot)\d s, \quad \forall\, x\in\X,\ f\in\mathcal{A},\ t\geq 0.
\end{equation*}

As demonstrated by Lemma \ref{lem:equi_characterization_ofWasserstein_distance} and the Kantorovich--Rubinstein duality principle, convergence in certain topologies $\tau$ on $\mathcal{P}(\X)$ can be characterized by test functions equivalently. This paper focuses on such topologies. To be precise, we formally define the ergodic properties studied in this work as follows.

 \begin{definition}\label{def:mixing}
    Let $\mathfrak{F}\subset B(\X)$ be a family of measurable function.
    \begin{enumerate}[label=(\roman*)]
        \item $\Phi$ is \textit{asymptotically stable with respect to $\mathfrak{F}$} if it has a unique invariant measure $\mu\in\mathcal{P}(\X)$ such that $\mathfrak{F} \subset L^1(\mu)$ and for all $x\in\X$ and all $f\in\mathfrak{F}$,
        \begin{equation*}
        \lim\limits_{t\rightarrow\infty}\left|P_tf(x)-\langle f,\mu \rangle\right|=0.
        \end{equation*}
        
        \item $\Phi$ is \textit{mean ergodic with respect to $\mathfrak{F}$} if it has a unique invariant measure $\mu\in\mathcal{P}(\X)$ such that $\mathfrak{F} \subset L^1(\mu)$ and for all $x\in\X$ and all $f\in\mathfrak{F}$,
        \begin{equation*}
        \lim\limits_{t\rightarrow\infty}\left|Q_tf(x)-\langle f,\mu \rangle\right|=0.
        \end{equation*}
        
         \item $\Phi$ is \textit{uniformly asymptotically stable with respect to $\mathfrak{F}$} if it has a unique invariant measure $\mu\in\mathcal{P}(\X)$ such that $\mathfrak{F} \subset L^1(\mu)$ and for all $x\in\X$,
        \begin{equation*}
            \lim\limits_{t\rightarrow\infty}\sup_{f\in\mathfrak{F}}\left|P_tf(x)-\langle f,\mu \rangle\right|=0.
        \end{equation*}
         
         \item $\Phi$ is \textit{uniformly mean ergodic with respect to $\mathfrak{F}$} if it has a unique invariant measure $\mu\in\mathcal{P}(\X)$ such that $\mathfrak{F} \subset L^1(\mu)$ and for all $x\in\X$,
        \begin{equation*}
            \lim\limits_{t\rightarrow\infty}\sup_{f\in\mathfrak{F}}\left|Q_tf(x)-\langle f,\mu \rangle\right|=0.
        \end{equation*}
    \end{enumerate}
 \end{definition}
 
\begin{remark} \label{rem:example_for_F}
    Let us mention that the choice of $\mathfrak{F}$ can be quite general. We list here several specific cases for illustration.
    \begin{itemize}
        \item [(a)] $\mathfrak{F}=L_b(\X)$. In this case, the notion of eventual continuity with respect to $\mathfrak{F}$ coincides with the  eventual continuity defined in \cite[Definition 2.5]{GLLL2024}. Thus, Theorem \ref{thm:f_mixing} directly implies \cite[Theorem 3.16]{GLLL2024}.
        \item [(b)] $\mathfrak{F}=B_b(\X)$. In this case, the asymptotic stability with respect to $\mathfrak{F}$ is equivalent to that
        \begin{equation*}
        \lim\limits_{t\rightarrow\infty}P_t(x,A)=\mu(A)\quad\forall\, x\in\X,\;A\in\mathcal{B}(\X).
        \end{equation*}
        This conclusion aligns with Doob's theorem as stated in \cite[Theorem 4.2.1]{DaPratoZabczyk1996}. It is worth mentioning that \cite[Theorem 2.5]{LiLiu2025} proves a result that, in essence, corresponds to convergence in this topology. This result is also a consequence of Theorem~\ref{thm:f_mixing}.
        
        \item[(c)] $\mathfrak{F}=\left\{f\in C(\X):\sup_{x\in\X}\frac{|f(x)|}{1+\rho^p(x,x_0)}<\infty\right\}$ for some $x_0\in\X$ and $p\ge1$. Asymptotic stability with respect to this $\mathfrak{F}$ implies convergence in the Wasserstein distance (see Definition \ref{def:W_distance}).

        \item[(d)] $\mathfrak{F}=\left\{f\in B(\X):\sup_{x\in\X}\frac{|f(x)|}{1+V(x)}<\infty\right\}$ for some $V:\X\to\mathbb{R}_+$. Uniformly asymptotic stability with respect to this $\mathfrak{F}$ implies convergence in the $V$-weighted total variation distance (see Definition \ref{def:V_distance}).
       
    \end{itemize}               
    \end{remark}

The first component in the equivalent characterization of asymptotic stability and mean ergodicity with respect to $\mathfrak{F}$ is the lower bound condition. As introduced in Section~\ref{Sec:Intro}, such conditions play a central role in the ergodic theory of Markov processes (see, e.g., \cite{LasotaSzarek2006,Szarek2006}). More precisely, we introduce the following two types of lower bound conditions, which will be used throughout this paper.

\begin{definition}\label{def:LBC}
    
    \begin{itemize}[label=(\roman*)]
        \item[(1)] $\Phi$ is said to satisfy the lower bound condition \eqref{eq:LBC_P} at $z\in\X$ if 
        \begin{equation}
        \label{eq:LBC_P}
            \forall\, r>0,\quad\inf_{x\in \X}\liminf_{t\to \infty }P_t(x,B(z,r))>0.\tag{\(\mathcal{C}_1\)}
        \end{equation}
       
        \item[(2)] $\Phi$ is said to satisfy the lower bound condition \eqref{eq:LBC_Q} at $z\in\X$ if
        \begin{equation*}
        \label{eq:LBC_Q}
            \forall\, r>0,\quad\inf_{x\in \X}\limsup_{t\to \infty }Q_t(x,B(z,r))>0.
            \tag{\(\mathcal{C}_2\)}
        \end{equation*}
    \end{itemize}
\end{definition}

The second key concept is that of generalized (Ces\`aro) eventual continuity.

 \begin{definition}\label{def:F_EvC}$\Phi$ is said to be eventually continuous with respect to a family $\mathfrak{F}\subset\mathcal{A}$ at $x\in \X$ if for any $f\in\mathfrak{F}$,
		\begin{equation}\label{eq:f_EvC}
			\limsup\limits_{x'\rightarrow x}\limsup\limits_{t\rightarrow\infty}|P_tf(x')-P_tf(x)|=0.
		\end{equation} 
If \eqref{eq:f_EvC} holds for any $x\in\X$, then $\Phi$ is said to be eventually continuous with respect to $\mathfrak{F}$ on $\X$.
   
\end{definition}

In the definition above, the requirement $\mathfrak{F}\subset \mathcal{A}$ is necessary since it guarantees the well posedness of \eqref{eq:f_EvC}. To compensate the lack of boundedness of test function, a stronger assumption on test function $\mathfrak{F}$ is needed for deriving our main result:

\vspace{1mm}
\begin{itemize}
    \item[$\mathbf{(H_1)}$] \label{H_1}  For any $f\in\mathfrak{F}$ and $x\in\X$, $\{ f(\Phi_t)\}_{t\ge0}$ is uniformly integrable under $\P^x$.
\end{itemize}
\vspace{1mm}

 Our first characterization between asymptotic stability and semigroup regularities is formulated as follows.

\begin{theorem}\label{thm:f_mixing}
   Let $\mathfrak{F}\subset B(\X)$ be such that $L_b(\X)\subset\mathfrak{F}$ and satisfy Hypothesis $(\mathbf{H_1})$.
   Then the following statements are equivalent:
   \begin{itemize}
			\item[(\runum{1})] $\Phi$ is asymptotically stable with respect to $\mathfrak{F}$.
			\item[(\runum{2})] $\Phi$ is eventually continuous with respect to $\mathfrak{F}$ on $\X$, and there exists $z\in\X$ such that $\Phi$ satisfies the lower bound condition \eqref{eq:LBC_P} at $z$.
            \item[(\runum{3})] There exists $z\in\X$ such that $\Phi$ is eventually continuous with respect to $\mathfrak{F}$ at $z$ and satisfies the lower bound condition \eqref{eq:LBC_P} at $z$.
		\end{itemize}
\end{theorem}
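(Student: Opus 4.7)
The plan is to prove the cycle $(i)\Rightarrow(ii)\Rightarrow(iii)\Rightarrow(i)$, with $(iii)\Rightarrow(i)$ carrying essentially all the work. For $(i)\Rightarrow(ii)$, given the unique invariant $\mu$ and $P_tf(x)\to\langle f,\mu\rangle$ for every $f\in\mathfrak{F}$ and $x\in\X$, eventual continuity at every $x\in\X$ is a direct consequence of the triangle inequality
\[
|P_tf(x')-P_tf(x)|\le|P_tf(x')-\langle f,\mu\rangle|+|\langle f,\mu\rangle-P_tf(x)|,
\]
both summands vanishing as $t\to\infty$. For \eqref{eq:LBC_P}, the inclusion $L_b(\X)\subset\mathfrak{F}$ yields weak convergence $P_t(x,\cdot)\to\mu$ for every $x$, and the Portmanteau theorem applied to the open ball $B(z,r)$ with any $z\in\supp\mu$ gives $\liminf_{t\to\infty}P_t(x,B(z,r))\ge\mu(B(z,r))>0$, uniformly in $x$. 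The implication $(ii)\Rightarrow(iii)$ is immediate by specializing to the $z$ supplied by $(ii)$.

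For $(iii)\Rightarrow(i)$ I would use a coupling argument in the spirit of \cite{KulikScheutzow2015}. The central estimate is the two-point bound
\[
\lim_{t\to\infty}|P_tf(x_1)-P_tf(x_2)|=0\qquad\forall\,x_1,x_2\in\X,\ f\in\mathfrak{F}.
\]
Working with an independent coupling of two copies of $\Phi$ started from $x_1$ and $x_2$, \eqref{eq:LBC_P} at $z$ ensures that for every $\delta>0$ there is a large time $T$ at which both copies simultaneously occupy $B(z,\delta)$ with positive probability. Conditioning at $T$ and invoking eventual continuity at $z$ makes $|P_sf(\Phi_T^{x_1})-P_sf(\Phi_T^{x_2})|$ arbitrarily small as $s\to\infty$ on that event, and the Markov property then decomposes $|P_{T+s}f(x_1)-P_{T+s}f(x_2)|$ into a main term controlled by eventual continuity plus an ``off-ball'' error coming from the event that one of the copies misses $B(z,\delta)$ at time $T$. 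Once this two-point bound is available, I would apply it first on $L_b(\X)$, where the Feller property turns it into Cauchyness of $\{P_t(x,\cdot)\}_{t\ge 0}$ in the bounded-Lipschitz metric and thereby produces a unique invariant $\mu$; the extension from $L_b(\X)$ to general $f\in\mathfrak{F}$ then proceeds by integrating the two-point estimate against $\mu$ in $x_2$ and using invariance $\int P_tf\,\d\mu=\langle f,\mu\rangle$, with $\mathfrak{F}\subset L^1(\mu)$ supplied by $\mathbf{(H_1)}$ and the generalized Birkhoff theorem from the Appendix.

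The main obstacle is controlling that off-ball error when $f\in\mathfrak{F}$ is unbounded, and this is precisely the role of $\mathbf{(H_1)}$. By the Markov property the error has the form $\E^{x_i}[f(\Phi_{t+s})\chi_{\{\Phi_t\notin B(z,\delta)\}}]$, whose modulus splits as
\[
\E^{x_i}\bigl[|f(\Phi_{t+s})|\chi_{\{|f(\Phi_{t+s})|>K\}}\bigr]+K\,\P^{x_i}\bigl(\Phi_t\notin B(z,\delta)\bigr).
\]
Uniform integrability of $\{f(\Phi_u)\}_{u\ge 0}$ under $\P^{x_i}$ makes the first summand small uniformly in $t,s$ for $K$ large, after which the second summand is absorbed by a quantitative use of \eqref{eq:LBC_P} at $z$ once $\delta$ and $K$ are fixed. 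Without $\mathbf{(H_1)}$, an unbounded integrand could create arbitrarily large contributions on rare events and the entire coupling bound would collapse, so this hypothesis is structural rather than cosmetic to the whole scheme.
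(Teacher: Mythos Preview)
Your implications $(i)\Rightarrow(ii)\Rightarrow(iii)$ match the paper's. The gap is in $(iii)\Rightarrow(i)$, specifically in your treatment of the off-ball error. Conditioning at a single deterministic time $T$, the lower bound condition \eqref{eq:LBC_P} only yields $\P^{x_i}(\Phi_T\notin B(z,\delta))\le 1-\gamma$ for some $\gamma=\gamma_z(\delta)>0$; that probability does not tend to zero and cannot be made small by any ``quantitative use'' of \eqref{eq:LBC_P}. Since $K$ has already been fixed large to control the uniform-integrability tail, your second summand $K\,\P^{x_i}(\Phi_T\notin B(z,\delta))$ is of order $K(1-\gamma)$, which is large rather than small. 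The same obstruction is present for bounded $f$, so it is not cured by $\mathbf{(H_1)}$: a one-step decomposition gives at best $\limsup_t|P_tf(x_1)-P_tf(x_2)|\le 2\varepsilon+2\|f\|_\infty(1-\gamma^2)$, and you describe no mechanism for iterating away the second term.

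The paper repairs exactly this point by passing from a fixed $T$ to the stopping time $\tau=\inf\{t\ge0:\vec\Phi_t\in\overline{B(z,r)}\times\overline{B(z,r)}\}$ for the independent coupling $\vec\Phi=(\Phi^{(1)},\Phi^{(2)})$ started from $\Theta_0=\delta_x\times\mu$ (not from two arbitrary Dirac masses). The lower bound condition is iterated along a sequence of renewal times to obtain $\P^{\Theta_0}(\tau>t_0+\cdots+t_n)\le(1-\gamma_z(r)/2)^n$ and hence $\P^{\Theta_0}(\tau<\infty)=1$; the decomposition is then $\{\tau\le t\}$ versus $\{\tau>t\}$, with $\P^{\Theta_0}(\tau>t)\to0$ genuinely. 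Uniform integrability is invoked not to bound $K\cdot\P(\text{off ball})$ but to interchange $\limsup_t$ with $\E^{\Theta_0}$ on $\{\tau\le t\}$ (Lemma~\ref{lem:UI_exchange_limit_and_integral}) and to kill $\E^{\Theta_0}[V(\Phi_t^{(i)})\chi_{\{\tau>t\}}]$ via uniform absolute continuity. A secondary issue: the paper does not extract $\mu$ from Cauchyness of $\{P_t(x,\cdot)\}$ in the bounded-Lipschitz metric---your two-point bound would only give $\lim_s|P_sf(x)-P_{s+h}f(x)|=0$ for each fixed $h$, which does not force convergence of $\{P_tf(x)\}_t$---but instead invokes Proposition~\ref{Prop:Exist*} for existence and then couples directly against $\mu$, bypassing any two-point estimate.
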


Our next result concerns the asymptotic behavior of the Ces\`aro averages of the Feller process $\Phi$, which is another basic ergodic behavior of Feller processes (see \cite{Worm2010} and \cite{GLLL2024}).

\begin{definition}\label{def:F_Q_EvC}  $\Phi$ is said to be Ces\`aro eventually continuous with respect to a family $\mathfrak{F}\subset\mathcal{A}$ at $x\in \X$ if for any $f\in\mathfrak{F}$,
		\begin{equation*}
			\limsup\limits_{x'\rightarrow x}\limsup\limits_{t\rightarrow\infty}|Q_tf(x')-Q_tf(x)|=0.
		\end{equation*} 
\end{definition}

Building upon this concept, we establish the following result.
\begin{theorem}\label{thm:Q_f_mixing}
    Let $\mathfrak{F}\subset B(\X)$ be such that $L_b(\X)\subset\mathfrak{F}$ and satisfy Hypothesis $(\mathbf{H_1})$.
    Then the following statements are equivalent:
    \begin{itemize}
			\item[(\runum{1})] $\Phi$ is mean ergodic with respect to $\mathfrak{F}$.
			\item[(\runum{2})] $\Phi$ is Ces\`aro eventually continuous with respect to $\mathfrak{F}$ on $\X$, and there exists $z\in\X$ such that $\Phi$ satisfies the lower bound condition \eqref{eq:LBC_Q} at $z$.
            \item[(\runum{3})] There exists $z\in\X$ such that $\Phi$ is Ces\`aro eventually continuous with respect to $\mathfrak{F}$ at $z$ and satisfies the lower bound condition \eqref{eq:LBC_Q} at $z$.
		\end{itemize}
\end{theorem}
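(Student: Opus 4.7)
The plan is to follow the strategy of Theorem \ref{thm:f_mixing} with the Cesàro averages $\{Q_t\}$ replacing $\{P_t\}$, establishing the cycle (i) $\Rightarrow$ (ii) $\Rightarrow$ (iii) $\Rightarrow$ (i). The implication (ii) $\Rightarrow$ (iii) is immediate. For (i) $\Rightarrow$ (ii), mean ergodicity gives pointwise convergence of $Q_t f$ to the constant $\langle f,\mu\rangle$ for each $f\in\mathfrak{F}$, so Cesàro EvC on all of $\X$ is automatic; for LBC \eqref{eq:LBC_Q}, fix any $z\in\supp\mu$ and any $r>0$, and apply mean ergodicity to the Lipschitz bump $f(y)=(1-\rho(y,z)/r)^+ \in L_b(\X)\subset\mathfrak{F}$, which satisfies $f\le\chi_{B(z,r)}$ and $\langle f,\mu\rangle>0$, yielding the uniform-in-$x$ lower bound.

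The substance of the theorem lies in (iii) $\Rightarrow$ (i), which I would carry out in three stages. \emph{Existence.} From LBC \eqref{eq:LBC_Q} together with the Feller property, one extracts tightness of $\{Q_{t_k}(x_0,\cdot)\}$ along a suitable subsequence, so Krylov--Bogoliubov produces an invariant measure $\mu$, and $z$ lies in $\supp\mu$ by the lower bound. \emph{Mean ergodicity for $f\in L_b(\X)$.} Given $\varepsilon>0$, Cesàro EvC at $z$ yields $r>0$ with $\limsup_{t\to\infty}|Q_tf(x')-Q_tf(z)|<\varepsilon$ for all $x'\in B(z,r)$, and LBC \eqref{eq:LBC_Q} yields $\alpha>0$ with $\limsup_{t\to\infty}Q_t(x,B(z,r))\ge\alpha$ for every $x\in\X$. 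Build a coupling of the process started from $x$ and of an auxiliary process whose initial law is $\mu$; on the ``doubly visited'' time set, the Cesàro EvC bound transfers to $|Q_tf(x)-Q_tf(y)|$ for points $y$ near $z$, while the generalized Birkhoff theorem in the Appendix converts the frequency of doubly-visited times into a deterministic positive density. Combining these with the invariance identity $\langle f,\mu\rangle=\int Q_tf\,\d\mu$ produces $\limsup_{t\to\infty}|Q_tf(x)-\langle f,\mu\rangle|\le C\varepsilon$, and letting $\varepsilon\to 0$ closes the bounded Lipschitz case. \emph{Extension to $\mathfrak{F}$.} Truncate $f\in\mathfrak{F}$ by a sequence of bounded Lipschitz approximants; Hypothesis $(\mathbf{H_1})$ supplies uniform-in-$t$ smallness of tail integrals of $f(\Phi_t)$ under $\P^x$, and invariance of $\mu$ together with $(\mathbf{H_1})$ yields $\mathfrak{F}\subset L^1(\mu)$, so the Cesàro averages of $f$ also converge.

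The main obstacle is the coupling construction in stage two. Unlike the asymptotic-stability setting of Theorem \ref{thm:f_mixing}, where one seeks eventual proximity of the two coupled trajectories at a single large time, here one must instead track and estimate the \emph{fraction of time} during which both trajectories simultaneously lie in $B(z,r)$. Handling this time-averaged event is precisely the role of the generalized Birkhoff-type ergodic theorem placed in the Appendix, and its interaction with a Fubini exchange inside $Q_t$ and with the integrated form of LBC \eqref{eq:LBC_Q} is where the most delicate estimates are expected to occur.
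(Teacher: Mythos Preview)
Your (i)$\Rightarrow$(ii) and (ii)$\Rightarrow$(iii) match the paper. The gap is in (iii)$\Rightarrow$(i), specifically in your stage two. The ``doubly visited time set'' idea does not work: if at time $s$ both coupled trajectories lie in $B(z,r)$, then for Lipschitz $f$ you only get $|f(\Phi_s^{(1)})-f(\Phi_s^{(2)})|\le 2r\|f\|_{Lip}$, but on the complementary set (which LBC~\eqref{eq:LBC_Q} does \emph{not} make small---it only guarantees the visited set has positive, not full, density) the integrand can be as large as $2\|f\|_\infty$. No Birkhoff-type argument controls that remainder. Moreover, the generalized Birkhoff theorem in the Appendix is not about visit frequencies at all; it is used only to certify $\mathfrak{F}\subset L^1(\mu)$ (Lemma~\ref{lem:integrability_by_generalised_Bir}).

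The paper abandons the two-process coupling for the Ces\`aro case and works with a \emph{single} process. First, LBC~\eqref{eq:LBC_Q} is upgraded to the stronger liminf version \eqref{eq:LBC_3} via a bootstrap using Ces\`aro EvC at $z$ and the Markov property (your proposal skips this, but it is essential for the hitting-time estimate below). Then, for $\nu=\delta_x$ or $\nu=\mu$, let $\tau=\inf\{t\ge 0:\Phi_t\in\overline{B(z,r)}\}$; the upgraded lower bound gives $\P^\nu(\tau<\infty)=1$. Split $\frac1t\int_0^t f(\Phi_s)\,\d s$ into the pieces over $[0,\tau]$, $[\tau,t]$, and a boundary correction $[t,t+\tau]$; the first and third vanish as $t\to\infty$ (using $(\mathbf{H_1})$ for uniform integrability), and on the middle piece the strong Markov property plus Ces\`aro EvC at $z$ gives $Q_{t-\tau}f(\Phi_\tau)\approx Q_{t-\tau}f(z)$ since $\Phi_\tau\in\overline{B(z,r)}$. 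This yields $\limsup_t|Q_tf(x)-Q_tf(z)|=0$ and $\limsup_t|\langle f,\mu\rangle-Q_tf(z)|=0$ separately, with $Q_tf(z)$ as pivot; see Lemma~\ref{lemma:uniformly_cesaro_loss_of_memory}. Your stage three (truncation and $(\mathbf{H_1})$) is also unnecessary: the paper treats each $f\in\mathfrak{F}$ directly by applying the lemma to the singleton $\tilde{\mathfrak{F}}=\{f\}$.
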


\begin{remark}
   
   Theorem~\ref{thm:Q_f_mixing} refines  \cite[Theorem 3.12]{GLLL2024}  by weakening the required semigroup regularity assumption to a localized version.  In practice, condition $(\runum{3})$ is more applicable, as it only requires verifying the Ces\`aro eventual continuity  at a single point $z$, rather than over the entire state space $\X$. 
\end{remark}

 The requirement ``$L_b(\X) \subset \mathfrak{F}$'' in Theorem \ref{thm:f_mixing} and Theorem \ref{thm:Q_f_mixing} can be replaced by the condition that ``$\{f \in L_b(\X) : \|f\|_\infty \le r\} \subset \mathfrak{F}$ for some $r > 0$''. This relaxation is justified by considering the scaled family $\bar{\mathfrak{F}} = \{\lambda f : \lambda \in \R,\ f \in \mathfrak{F}\}$ and noting that (Cesàro) eventual continuity, asymptotic stability, and mean ergodicity with respect to $\mathfrak{F}$ are equivalent to those with respect to $\tilde{\mathfrak{F}}$. When $\mathfrak{F} \subset B_b(\X)$, Hypothesis $\mathbf{(H_1)}$ is fulfilled naturally and we can omit it. 
 
 When $\mathfrak{F}$ contains unbounded functions, Hypothesis $\mathbf{(H_1)}$ is important. In particular, Section \ref{Subsec: H1_necessary} provide an explicit counterexample where violation of Hypothesis $\mathbf{(H_1)}$ induces failure of our main theorems. 
    On the other hand, this hypothesis is usually not difficult to verify and combining with Lemma~\ref{lem:boundedness_implies_UI}, a practical verification through Lyapunov function techniques is provided in Proposition~\ref{lem:S_Lypv}. This method can also be used to verify Hypothesis $\mathbf{(H_2)}$ below.

    \begin{proposition}\label{lem:S_Lypv}
    Let $\mathcal{L}$ be the generator of Feller process $\Phi$. If there exists a function $V:\X\to\R_+$ with  $\lim\limits_{\rho(x,x_0)\to\infty}V(x)=\infty$ for some $x_0\in\X$ and
    \begin{equation*}
        \mathcal{L}V(x)\le-\varphi(V(x))+C,
    \end{equation*}
    where $\varphi\in C^1(\R_+)$ is an  increasing nonnegative concave function, $C\ge0$. Then $$ \sup_{t\ge0}\E \varphi(V(\Phi_t))< \infty.$$ 
    \end{proposition}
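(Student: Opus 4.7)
The plan is to combine Dynkin's formula with the concavity of $\varphi$ to produce a (local) supermartingale structure for $\varphi\circ V$, and then to iterate via the strong Markov property. As a preliminary, an application of Dynkin's formula to $V$ itself using the hypothesis $\mathcal{L}V\le-\varphi(V)+C$ and $V\ge 0$ yields the time-averaged integrability
\[
\int_0^T\E^{x}\varphi(V(\Phi_s))\,\d s\le V(x)+CT,
\]
which will be useful for handling the jump case.

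The first key step will be to exploit the concavity of $\varphi$ at the level of the generator. Since $\varphi$ is concave, Jensen's inequality for the Markov semigroup gives $P_t(\varphi\circ V)(y)\le\varphi(P_tV(y))$ for all $y\in\X$ and $t\ge 0$; dividing by $t$ and letting $t\to 0^{+}$ (and using that $\varphi'(0)<\infty$, since $\varphi\in C^{1}(\R_+)$) then yields
\[
\mathcal{L}(\varphi\circ V)(y)\;\le\;\varphi'(V(y))\,\mathcal{L}V(y)\;\le\;\varphi'(V(y))\bigl(C-\varphi(V(y))\bigr).
\]
If $\varphi$ is bounded the conclusion is immediate from $\varphi\circ V\le\sup\varphi<\infty$, so I restrict to the non-trivial unbounded case and set $v_\ast:=\varphi^{-1}(C)$. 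The right-hand side above is non-positive on $\{V\ge v_\ast\}$, so the process $\{\varphi(V(\Phi_t))\}_{t\ge 0}$ is a non-negative local supermartingale on excursions into $\{V\ge v_\ast\}$.

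The second step will be an optional-stopping and regenerative argument. Define $\tau_\ast:=\inf\{t\ge 0:V(\Phi_t)\le v_\ast\}$; for $x$ with $V(x)>v_\ast$, optional stopping of the supermartingale $\{\varphi(V(\Phi_{t\wedge\tau_\ast}))\}$ yields $\E^{x}\varphi(V(\Phi_{t\wedge\tau_\ast}))\le\varphi(V(x))$, hence $\E^{x}[\varphi(V(\Phi_t))\chi_{t<\tau_\ast}]\le\varphi(V(x))$. For the complementary event $\{t\ge\tau_\ast\}$, the strong Markov property at $\tau_\ast$ reduces matters to a uniform-in-$(s,y)$ bound of $\E^{y}\varphi(V(\Phi_s))$ for starting points $y$ with $V(y)\le v_\ast$. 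Here I would decompose each trajectory via $\sigma_s:=\sup\{r\le s:V(\Phi_r)\le v_\ast\}$: on $\{\sigma_s=s\}$ one has $\varphi(V(\Phi_s))\le C$ pointwise, while on $\{\sigma_s<s\}$ the process is in an excursion above $v_\ast$ started from the boundary $\{V=v_\ast\}$, so the supermartingale property gives $\E[\varphi(V(\Phi_s))\mid\mathcal{F}_{\sigma_s}]\le\varphi(V(\Phi_{\sigma_s}))=C$. Combining the two cases yields $\E^{y}\varphi(V(\Phi_s))\le C$, and therefore $\sup_{t\ge 0}\E^{x}\varphi(V(\Phi_t))\le\varphi(V(x))+C<\infty$.

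The hard part will be extending this excursion argument to general Feller processes with jumps, where $V(\Phi_{\sigma_s})$ may overshoot the barrier $v_\ast$ and the clean boundary identity $\varphi(V(\Phi_{\sigma_s}))=C$ is lost. My plan is to combine a slight enlargement of the sublevel set (choosing a larger threshold so that $\mathcal{L}V\le-C$ strictly outside, keeping $\E^{x}\tau_\ast$ finite via optional stopping on $V$) with the time-averaged bound derived in the preliminary step, which will absorb the cumulative overshoot contribution through a Foster--Lyapunov iteration over successive excursions.
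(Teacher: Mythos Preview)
Your approach is genuinely different from the paper's, and the excursion argument has a real gap that is distinct from the overshoot issue you flag.

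The paper never works pathwise. It applies Dynkin to $V$ to obtain
\[
\E^{x}V(\Phi_t)\;\le\;V(x)-\int_0^t \E^{x}\varphi(V(\Phi_s))\,\d s+Ct,
\]
then uses Jensen for the \emph{convex} function $\varphi^{-1}$ to get $\varphi^{-1}(u(t))\le \E^{x}V(\Phi_t)$ with $u(t):=\E^{x}\varphi(V(\Phi_t))$, and finally compares the resulting integral inequality for $u$ with the scalar ODE $f'=(C-f)\,\varphi'(\varphi^{-1}(f))$, whose solutions converge to the fixed point $f=C$. Everything happens at the level of expectations, so no distinction between continuous and jump processes is needed.

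Your route instead pushes Jensen down to the generator to get $\mathcal{L}(\varphi\circ V)\le\varphi'(V)(C-\varphi(V))$, which is correct and yields the supermartingale property on $\{V\ge v_\ast\}$. The problem is the decomposition via $\sigma_s:=\sup\{r\le s:V(\Phi_r)\le v_\ast\}$. This is a \emph{last exit time}, not a stopping time, so neither the optional stopping theorem nor the strong Markov property is available at $\sigma_s$; the displayed inequality $\E[\varphi(V(\Phi_s))\mid\mathcal{F}_{\sigma_s}]\le\varphi(V(\Phi_{\sigma_s}))$ is therefore unjustified, even for continuous paths. This is a more basic obstruction than the jump overshoot you identify as ``the hard part.'' Repairing it would require a forward-in-time decomposition via successive up/down crossing stopping times together with a uniform control on the number of excursions, which is substantially more work than your sketch suggests---and at that point the paper's ODE comparison is both shorter and insensitive to the path regularity of $\Phi$.
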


The third main result of this paper establishes a connection between convergence in specific coupling distances on probability measures and the regularity properties of the associated semigroup. According to the Kantorovich--Rubinstein duality principle (see, e.g., \cite[Theorem 5.1]{Villani2009}), convergence in these distances is equivalent to uniform convergence over test functions. To address this, we introduce a uniform version of eventual continuity, derived by appropriately modifying Definition~\ref{def:F_EvC}.

\begin{definition}\label{def:F_uEvC}
    $\Phi$ is said to be \textit{uniformly eventually continuous with respect to a family $\mathfrak{F}\subset\mathcal{A}$ at $x\in\X$} if 
		\begin{equation*}
			\limsup\limits_{x'\rightarrow x}\limsup\limits_{t\rightarrow\infty}\sup_{f\in\mathfrak{F}}|P_tf(x')-P_tf(x)|=0.
		\end{equation*}
\end{definition}

It therefore follows that the notion of uniform eventual continuity is closely related to asymptotic stability with respect to coupling distances on probability measures. More specifically, we obtain the following result, which in turn implies asymptotic stability with respect to the ($V$-weighted) total variation distance; see Corollary~\ref{thm:tv_mixing} later.

\vspace{2mm}
We introduce the following hypothesis:
\begin{itemize}
    \item[$\mathbf{(H_2)}$] \label{H_2} There exists a function $V \in \mathcal{A}$ such that $|f(x)| \le V(x)$ for all $f\in\mathfrak{F}$ and $x\in\X$, and the family $\{ V(\Phi_t)\}_{t\ge0}$ is uniformly integrable under $\P^x$ for all $x\in\X$.
\end{itemize}

\begin{theorem}\label{thm:f_uniform_mixing}
    Let $\mathfrak{F}\subset B(\X)$ be such that $\{f\in L_b(\X):\|f\|_\infty\le 1\}\subset \mathfrak{F}$ and satisfy Hypothesis $\mathbf{(H_2)}$.
    Then the following statements are equivalent:
    \begin{itemize}
			\item[(\runum{1})] $\Phi$ is uniformly asymptotically stable with respect to $\mathfrak{F}$.
			\item[(\runum{2})] $\Phi$ is uniformly eventually continuous with respect to $\mathfrak{F}$ on $\X$, and there exists $z\in\X$ such that $\Phi$ satisfies the lower bound condition \eqref{eq:LBC_P} at $z$.
            \item[(\runum{3})] There exists $z\in\X$ such that $\Phi$ is uniformly eventually continuous with respect to $\mathfrak{F}$ at $z$ and satisfies the lower bound condition \eqref{eq:LBC_P} at $z$.
		\end{itemize}
\end{theorem}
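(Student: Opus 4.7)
The plan is to prove the circular chain (i) $\Rightarrow$ (ii) $\Rightarrow$ (iii) $\Rightarrow$ (i); the first two implications are essentially formal, and the real work lies in (iii) $\Rightarrow$ (i). For (i) $\Rightarrow$ (ii), the triangle inequality
\[
\sup_{f\in\mathfrak{F}}|P_tf(x')-P_tf(x)|\le\sup_{f\in\mathfrak{F}}|P_tf(x')-\langle f,\mu\rangle|+\sup_{f\in\mathfrak{F}}|P_tf(x)-\langle f,\mu\rangle|
\]
yields uniform eventual continuity on all of $\X$; since $\mathfrak{F}$ contains the unit ball of $L_b(\X)$, condition (i) forces $P_t(x,\cdot)\to\mu$ weakly, and the Portmanteau theorem then supplies LBC~\eqref{eq:LBC_P} at any $z\in\supp\mu$. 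The implication (ii) $\Rightarrow$ (iii) is pure restriction.

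For (iii) $\Rightarrow$ (i), I would proceed in three stages. First, uniform eventual continuity at $z$ with respect to $\mathfrak{F}$ restricts to pointwise eventual continuity at $z$ for every $f$ in $\{g\in L_b(\X):\|g\|_\infty\le 1\}$, and by linearity extends to all of $L_b(\X)$; Theorem~\ref{thm:f_mixing}, invoked via the rescaling remark preceding this theorem, then delivers the unique invariant measure $\mu$ together with weak convergence $P_t(x,\cdot)\to\mu$ for every $x$. Second, $(\mathbf{H_2})$ gives $\sup_{t\ge0}\E^xV(\Phi_t)<\infty$, and combining the generalized Birkhoff theorem of the appendix with Fatou's lemma shows $\langle V,\mu\rangle<\infty$, so $\mathfrak{F}\subset L^1(\mu)$. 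Third, I would upgrade weak convergence to uniform $\mathfrak{F}$-convergence via a Kulik--Scheutzow style generalized coupling: on a common probability space, realize $\Phi^x$ alongside a stationary copy $\tilde\Phi$ of law $\mu$, and use LBC~\eqref{eq:LBC_P} to engineer successive simultaneous returns of both trajectories to $B(z,r)$. At a joint return time $\tau$, the strong Markov property and the invariance identity $\langle f,\mu\rangle=\langle P_{t-\tau}f,\mu\rangle$ give
\[
P_tf(x)-\langle f,\mu\rangle=\E\bigl[P_{t-\tau}f(\Phi^x_\tau)-P_{t-\tau}f(\tilde\Phi_\tau)\bigr]+R_t,
\]
where the main term is made uniformly small in $f\in\mathfrak{F}$ by uniform eventual continuity at $z$, and the remainder $R_t$, supported on the event that a simultaneous return has not yet occurred, is controlled by the dominating function $V$ through $(\mathbf{H_2})$ and the uniform integrability of $\{V(\Phi_t)\}$ under both $\P^x$ and $\P^\mu$.

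The principal obstacle is clearly the last stage. LBC~\eqref{eq:LBC_P} only delivers a positive lower bound on the probability of returning to $B(z,r)$, so no single visit can drive the coupling failure probability to zero; one has to iterate the return mechanism and show by a geometric-trials or Borel--Cantelli argument that the probability of never achieving a simultaneous visit in $B(z,r)$ decays. The unboundedness of $f\in\mathfrak{F}$ then rules out trivial sup-norm estimates on the failure event, so every such error has to be funneled through $V$ and the uniform integrability coming from $(\mathbf{H_2})$. Balancing the four parameters in play—the radius $r$, the number of coupling attempts, the time $t$, and the truncation level hidden inside the uniform integrability—so that the eventual continuity error, the coupling failure, and the residual $V$-tail simultaneously vanish, is where the technical heart of the proof resides.
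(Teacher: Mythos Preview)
Your proposal is correct and follows essentially the same route as the paper: the formal implications (i)$\Rightarrow$(ii)$\Rightarrow$(iii) are handled identically, and for (iii)$\Rightarrow$(i) the paper likewise reduces to Theorem~\ref{thm:f_mixing} for existence/uniqueness of $\mu$ and $\mathfrak{F}\subset L^1(\mu)$, then proves the key convergence (packaged as Lemma~\ref{lemma:uniformly_loss_of_memory}) via the independent coupling $\Theta_0=\delta_x\times\mu$, a geometric iteration showing the joint hitting time $\tau$ of $\overline{B(z,r)}\times\overline{B(z,r)}$ is a.s.\ finite, the strong Markov decomposition on $\{\tau\le t\}$ versus $\{\tau>t\}$, and uniform integrability of $\{V(\Phi_t^{(i)})\}$ under $\P^{\Theta_0}$ to control both pieces. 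One minor simplification relative to your outline: the ``number of coupling attempts'' is not a parameter to balance in the final estimate---the iteration is used only to establish $\P^{\Theta_0}(\tau<\infty)=1$, after which the decomposition involves just $r$, $t$, and the uniform-integrability truncation.
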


Similarly, we can modify Definition \ref{def:F_Q_EvC} to a uniform version, analogous to the extension from Definition \ref{def:F_EvC} to Definition \ref{def:F_uEvC}, as follows.

\begin{definition}\label{def:F_Q_uEvC}
    $\Phi$ is said to be \textit{uniformly Ces\`aro eventually continuous with respect to a family $\mathfrak{F}\subset\mathcal{A}$ at $x\in\X$} if 
		\begin{equation*}
			\limsup\limits_{x'\rightarrow x}\limsup\limits_{t\rightarrow\infty}\sup_{f\in\mathfrak{F}}|Q_tf(x')-Q_tf(x)|=0.
		\end{equation*}
\end{definition}

Accordingly, we have the following theorem:

\begin{theorem}\label{thm:Q_f_uniform_mixing}
    Let $\mathfrak{F}\subset B(\X)$ be such that $\{f\in L_b(\X):\|f\|_\infty\le 1\}\subset \mathfrak{F}$ and satisfy Hypothesis $\mathbf{(H_2)}$.
    Then the following statements are equivalent:
    \begin{itemize}
			\item[(\runum{1})] $\Phi$ is uniformly mean ergodic with respect to $\mathfrak{F}$.
			\item[(\runum{2})] $\Phi$ is uniformly Ces\`aro eventually continuous with respect to $\mathfrak{F}$ on $\X$, and there exists $z\in\X$ such that $\Phi$ satisfies the lower bound condition \eqref{eq:LBC_Q} at $z$.
            \item[(\runum{3})] There exists $z\in\X$ such that $\Phi$ is uniformly Ces\`aro eventually continuous with respect to $\mathfrak{F}$ at $z$ and satisfies the lower bound condition \eqref{eq:LBC_P} at $z$.
		\end{itemize}
\end{theorem}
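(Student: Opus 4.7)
The plan is to mirror the strategy of Theorem \ref{thm:f_uniform_mixing}, replacing the semigroup $\{P_t\}_{t\ge 0}$ by its Ces\`aro averages $\{Q_t\}_{t\ge 0}$ and replacing the lower bound condition \eqref{eq:LBC_P} by \eqref{eq:LBC_Q} throughout. The implication (ii)$\Rightarrow$(iii) is immediate. For (i)$\Rightarrow$(ii): uniform mean ergodicity with common limit $\mu$ independent of the starting point forces $\sup_{f\in\mathfrak{F}}|Q_tf(x')-Q_tf(x)|\to 0$ as $t\to\infty$ for every pair $x,x'\in\X$, which is uniform Ces\`aro eventual continuity on all of $\X$; restricting the convergence to the subfamily $\{f\in L_b(\X):\|f\|_\infty\le 1\}\subset\mathfrak{F}$ delivers weak convergence $Q_t(x,\cdot)\to\mu$, whence \eqref{eq:LBC_Q} holds at any $z\in\supp\mu$ by testing against a bump function concentrated on $B(z,r)$.

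The substantive direction is (iii)$\Rightarrow$(i), which I would split into three stages. First, apply Theorem \ref{thm:Q_f_mixing} to the subfamily $\mathfrak{F}_0=\{f\in L_b(\X):\|f\|_\infty\le 1\}$: Hypothesis $\mathbf{(H_1)}$ is automatic on $\mathfrak{F}_0$, and uniform Ces\`aro eventual continuity at $z$ obviously implies the pointwise Ces\`aro eventual continuity at $z$ required by that theorem. This produces a unique invariant probability measure $\mu\in\mathcal{P}(\X)$ with $Q_t(x,\cdot)\to \mu$ weakly for every $x\in\X$. Second, Hypothesis $\mathbf{(H_2)}$ gives uniform integrability of $\{V(\Phi_t)\}_{t\ge 0}$ under $\P^x$, hence $\{\langle V,Q_t(x,\cdot)\rangle\}_{t\ge 0}$ is bounded, and a Fatou passage yields $V\in L^1(\mu)$, so that $\mathfrak{F}\subset L^1(\mu)$.

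Third and crucially, I need to upgrade pointwise convergence to uniform convergence over $\mathfrak{F}$. Fix $\varepsilon>0$ and decompose
\begin{equation*}
    Q_tf(x)-\langle f,\mu\rangle=\bigl[Q_tf(x)-Q_tf(z)\bigr]+\bigl[Q_tf(z)-\langle f,\mu\rangle\bigr].
\end{equation*}
By uniform Ces\`aro eventual continuity at $z$, choose $r>0$ so that $\limsup_{t\to\infty}\sup_{f\in\mathfrak{F}}\sup_{x'\in B(z,r)}|Q_tf(x')-Q_tf(z)|<\varepsilon$. The LBC \eqref{eq:LBC_Q} at $z$ then yields $\delta>0$ with $\limsup_{t\to\infty}Q_t(x,B(z,r))\ge\delta$ for all $x\in\X$. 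Using the generalized Birkhoff-type theorem from the Appendix I would construct a coupling measure $\eta_t^x$ on $\X\times\X$ whose first marginal equals $Q_t(x,\cdot)$ and which places at least mass $\delta$ on $B(z,r)\times\X$, so that the first bracket is controlled by $\varepsilon$ plus a tail term bounded by $\langle V\chi_{\{V>R\}},Q_t(x,\cdot)\rangle+\langle V\chi_{\{V>R\}},\mu\rangle$; this is uniformly small in $f\in\mathfrak{F}$ and $t$ by Hypothesis $\mathbf{(H_2)}$ upon choosing $R$ large. The second bracket is the special case $x=z$ and is treated by the same scheme.

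The main obstacle is precisely this coupling step, and it is where the Ces\`aro case diverges substantively from the $P_t$-case of Theorem \ref{thm:f_uniform_mixing}. Condition \eqref{eq:LBC_Q} supplies positive mass near $z$ only in a time-averaged sense rather than at each large $t$, so the coupling has to be built from the empirical occupation measure rather than from a single-time transition kernel. Managing the joint uniformity in $f\in\mathfrak{F}$, the truncation level $R$, the ball radius $r$, and the time $t$—so that the tail error inherited from the unboundedness of $\mathfrak{F}$ does not spoil the estimate—is the technical core, and Hypothesis $\mathbf{(H_2)}$ is invoked precisely to convert the pointwise domination $|f|\le V$ into the required uniform-in-$f$ control.
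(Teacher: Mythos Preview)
Your treatment of (i)$\Rightarrow$(ii), (ii)$\Rightarrow$(iii), and the first two stages of (iii)$\Rightarrow$(i) (existence/uniqueness of $\mu$ via Theorem~\ref{thm:Q_f_mixing} on the bounded subfamily, and $V\in L^1(\mu)$ via Fatou) are correct and match the paper. The decomposition $Q_tf(x)-\langle f,\mu\rangle=[Q_tf(x)-Q_tf(z)]+[Q_tf(z)-\langle f,\mu\rangle]$ is also exactly what the paper uses (Lemma~\ref{lemma:uniformly_cesaro_loss_of_memory}).

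The gap is in how you propose to control the first bracket. Your ``coupling measure $\eta_t^x$'' is not a workable device: knowing $Q_t(x,B(z,r))\ge\delta$ only says a fraction $\delta$ of the Ces\`aro mass sits near $z$; it gives no control over the remaining mass $1-\delta$, and hence no way to compare $\int f\,\mathrm{d}Q_t(x,\cdot)$ with $Q_tf(z)$ uniformly over $\mathfrak{F}$. The generalized Birkhoff theorem in the appendix concerns divergence of $Q_t|f|$ when $f\notin L^1(\mu)$ and plays no role here.

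The missing idea is a stopping-time argument at the level of the \emph{process}, not the kernels. One first upgrades \eqref{eq:LBC_Q} to the $\liminf$ form $(\mathcal{C}_3)$ (this is Step~1 in the proof of Theorem~\ref{thm:Q_f_mixing}, which you have already invoked), guaranteeing that $\tau=\inf\{t\ge 0:\Phi_t\in\overline{B(z,r)}\}$ is $\P^\nu$-a.s.\ finite for every $\nu\in\mathcal{P}(\X)$. One then writes, for $\nu=\delta_x$ or $\nu=\mu$,
\[
\E^\nu\Bigl[\tfrac1t\int_0^t f(\Phi_s)\,\mathrm{d}s\Bigr]
\approx
\E^\nu\Bigl[\tfrac1t\int_\tau^{t+\tau} f(\Phi_s)\,\mathrm{d}s\Bigr],
\]
the error being dominated by $\E^\nu\bigl[\tfrac1t\int_0^\tau V(\Phi_s)\,\mathrm{d}s\bigr]+\E^\nu\bigl[\tfrac1t\int_t^{t+\tau}V(\Phi_s)\,\mathrm{d}s\bigr]$, which is uniform in $f\in\mathfrak{F}$ and vanishes as $t\to\infty$ by uniform integrability of $\{\tfrac1t\int_0^t V(\Phi_s)\,\mathrm{d}s\}_{t\ge0}$ (Lemma~\ref{lem:UI_Cesaro_av}). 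By the strong Markov property, $\E^\nu\bigl[\tfrac1t\int_\tau^{t+\tau} f(\Phi_s)\,\mathrm{d}s\,\big|\,\mathcal{F}_\tau\bigr]=Q_tf(\Phi_\tau)$, and since $\Phi_\tau\in\overline{B(z,r)}$ a.s., the uniform Ces\`aro eventual continuity at $z$ yields $\sup_{f\in\mathfrak{F}}|Q_tf(\Phi_\tau)-Q_tf(z)|\le\varepsilon$ for large $t$; one then passes the $\limsup$ inside the outer expectation via Lemma~\ref{lem:UI_exchange_limit_and_integral}. This is how both brackets are handled, and it is the step your proposal does not supply.
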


To conclude this section, we note that our results naturally extend to discrete-time Markov processes.

\section{Applications}\label{Sec:3}

\subsection{Asymptotic stability in $p$-Wasserstein distance}
 As a first application, we provide an equivalent characterization on asymptotic stability in the $p$-Wasserstein distance. 
 Recall that for $\mu,\nu\in\mathcal{P}(\X)$, a \emph{coupling} of $\mu,\nu$ is a probability measure $\pi$ on the product space $\X\times\X$ with marginals $\mu$ and $\nu$. We denote the set of all couplings between $\mu$ and $\nu$ by $\mathscr{C}{(\mu,\nu)}$. The $p$-Wasserstein distances is defined as follows.
\begin{definition}[{\cite[Definition 6.1]{Villani2009}}]\label{def:W_distance}
		Let $d$ be a semicontinuous metric on $(\X,\rho)$. For $p\in[1,\infty)$ \emph{$p$-Wasserstein distance for $d$} on $\mathcal{P}(\X)$ is
		\begin{equation*}
			W_{p,d}(\mu,\nu) := \inf_{\pi \in \mathscr{C}{(\mu,\nu)}}\left\{\int_{\X\times \X}d(x,y)^p\pi(\d x,\d y)\right\}^{1/p}.
		\end{equation*}
	\end{definition}
\noindent We note that when $d = \rho$, the distance $W_{p,\rho}$ reduces to the standard $p$-Wasserstein distance, denoted simply by $W_p$. Another classical example is the total variation distance, which will be discussed in the next subsection, obtained by setting $p = 1$ and $d(x, y) = \chi_{\{x \neq y\}}$. Convergence with respect to the $p$-Wasserstein distance can be characterized in terms of test functions, as detailed in the following lemma.

    \begin{lemma}[{\cite[Theorem 6.8]{Villani2009}}]
    \label{lem:equi_characterization_ofWasserstein_distance}
        Let $\{\mu_t\}_{t\ge0}\subset\mathcal{P}(\X)$ and $\mu\in \mathcal{P}(\X)$. Then the following statements are equivalent: 
    \begin{enumerate}[label=(\roman*)]
        \item$\lim\limits_{t\to\infty}W_{p}(\mu_t,\mu)=0$;
        \item For any  $\varphi\in C(\X)$ with $|\varphi(x)|\le 1+\rho(x_0,x)^p$, where $x_0\in\X$, one has
        \begin{equation*}
        \lim\limits_{t\to\infty}\langle \varphi,\mu_t\rangle =\langle \varphi,\mu\rangle.
    \end{equation*}
    \end{enumerate}
    \end{lemma}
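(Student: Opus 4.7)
The plan is to interpose two elementary convergence facts between (i) and (ii): (a) weak convergence $\mu_t \Rightarrow \mu$, and (b) convergence of the $p$-th moment $\langle \rho(x_0,\cdot)^p, \mu_t\rangle \to \langle \rho(x_0,\cdot)^p, \mu\rangle$. That $W_p$-convergence is equivalent to the conjunction (a)+(b) is the classical moment-plus-weak characterization of Wasserstein convergence on a Polish space (see, e.g., \cite[Theorem 6.9]{Villani2009}), which I would use as a black box. Granting this, the lemma reduces to showing that (ii) is equivalent to (a)+(b).

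For (ii) $\Rightarrow$ (i): specializing (ii) to bounded continuous $\varphi$, whose growth bound is trivial, yields (a); applying (ii) to $\varphi(x)=\rho(x_0,x)^p$, which is continuous and satisfies $|\varphi(x)|\le 1+\rho(x_0,x)^p$, yields (b). The cited equivalence then delivers $W_p(\mu_t,\mu)\to 0$.

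For (i) $\Rightarrow$ (ii): $W_p$-convergence yields (a) via $W_1\le W_p$ and Kantorovich--Rubinstein duality applied to bounded Lipschitz test functions, and it yields (b) from the triangle inequality
\[
\bigl|W_p(\delta_{x_0},\mu_t)-W_p(\delta_{x_0},\mu)\bigr|\le W_p(\mu_t,\mu)\to 0,
\]
together with the identity $W_p(\delta_{x_0},\nu)^p=\langle \rho(x_0,\cdot)^p,\nu\rangle$. Now for a continuous $\varphi$ with $|\varphi|\le 1+\rho(x_0,\cdot)^p$, I would pick a cutoff $\psi_R\in C_c(\X)$ with $0\le\psi_R\le 1$ and $\psi_R\equiv 1$ on $B(x_0,R)$, and split $\varphi=\varphi\psi_R+\varphi(1-\psi_R)$. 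The first summand is bounded continuous, so $\langle \varphi\psi_R,\mu_t\rangle\to\langle \varphi\psi_R,\mu\rangle$ by (a); the second is controlled pointwise by $(1+\rho(x_0,\cdot)^p)\chi_{\{\rho(x_0,\cdot)>R\}}$.

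The main obstacle is the uniform-integrability step: showing that these tails can be made arbitrarily small uniformly in $t$ by choosing $R$ large, i.e.\ that $\{1+\rho(x_0,\cdot)^p\}$ is uniformly integrable with respect to $\{\mu_t\}$. I would deduce this from (a)+(b) by a Vitali-type argument: for a truncation level $M>0$, the function $g_M:=(1+\rho(x_0,\cdot)^p)\wedge M$ is bounded continuous, so $\langle g_M,\mu_t\rangle\to\langle g_M,\mu\rangle$ by (a); combined with $\langle 1+\rho(x_0,\cdot)^p,\mu_t\rangle\to\langle 1+\rho(x_0,\cdot)^p,\mu\rangle$ from (b), subtracting and sending $M\to\infty$ yields $\sup_t \langle (1+\rho(x_0,\cdot)^p)\chi_{\{1+\rho(x_0,\cdot)^p>M\}},\mu_t\rangle\to 0$. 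Once this is in place, letting $t\to\infty$ for each fixed $R$ and then $R\to\infty$ completes the argument; this uniform-integrability step is the quantitatively substantive core, while the rest is a routine cutoff decomposition.
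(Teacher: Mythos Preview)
The paper does not prove this lemma; it is stated with a direct citation to \cite[Theorem 6.8]{Villani2009} and used as a black box. So there is no ``paper's proof'' to compare against, and your write-up is essentially a reconstruction of the classical argument in Villani. The overall strategy --- reducing to the equivalence $W_p$-convergence $\Leftrightarrow$ (weak convergence $+$ moment convergence), then handling (ii) by a truncation and uniform-integrability argument --- is sound.

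There is one genuine gap worth flagging: your cutoff step invokes $\psi_R \in C_c(\X)$ with $\psi_R \equiv 1$ on $B(x_0,R)$. On a general Polish space this need not exist --- the paper works explicitly in non-locally-compact settings (e.g.\ the stochastic Navier--Stokes example lives on an infinite-dimensional Hilbert space), where closed balls are not compact and $C_c(\X)$ contains no function that is identically $1$ on an open ball. The fix is routine: replace $\psi_R$ by a bounded Lipschitz cutoff such as $\psi_R(x) = \max\{0,\min\{1, R+1-\rho(x_0,x)\}\}$, so that $\varphi\psi_R \in C_b(\X)$ with $\|\varphi\psi_R\|_\infty \le 1+(R+1)^p$, and the rest of your argument goes through unchanged. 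A smaller point: ``bounded continuous $\varphi$, whose growth bound is trivial'' is slightly loose, since $|\varphi|\le 1+\rho(x_0,\cdot)^p$ forces $|\varphi(x_0)|\le 1$; you should rescale by $\|\varphi\|_\infty$ first.
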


   By applying Theorem \ref{thm:f_mixing} with the specific choice of test functions $\mathfrak{F}=\{f\in C(\X):|f(x)|\le (1+\rho(x_0,x))^p)\}$, we have:
    \begin{theorem}
		\label{thm:W_mixing}
		Let $\mathfrak{F} = \{ f \in C(\X) : |f(x)| \le 1 + \rho(x_0, x)^p \}$ with any given $x_0 \in \X$ be a family of test functions satisfying Hypothesis $\mathbf{(H_1)}$. Then the following statements are equivalent:
		\begin{itemize}
			\item [(\runum{1})] $\Phi$ admits a unique invariant measure $\mu\in\mathcal{P}(\X)$ 
            and for all $x\in\X$,
            \begin{equation*}
               \lim_{t\rightarrow\infty} W_p(P^*_t\delta_x,\mu)=0.
            \end{equation*}
			\item[(\runum{2})] $\Phi$ is eventually continuous with respect to $\mathfrak{F}$ on $\X$, and there exists $z\in\X$ such that $\Phi$ satisfies the lower bound condition \eqref{eq:LBC_P} at $z$.
            \item[(\runum{3})] There exists $z\in\X$ such that $\Phi$ is eventually continuous with respect to $\mathfrak{F}$ at $z$ and  satisfies the lower bound condition \eqref{eq:LBC_P} at $z$.
    \end{itemize}
\end{theorem}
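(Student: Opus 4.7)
The plan is to derive Theorem~\ref{thm:W_mixing} as an immediate corollary of Theorem~\ref{thm:f_mixing} applied to the test-function family $\mathfrak{F} = \{f \in C(\X) : |f(x)| \le 1 + \rho(x_0, x)^p\}$, using Lemma~\ref{lem:equi_characterization_ofWasserstein_distance} as the dictionary between convergence in $W_p$ and asymptotic stability with respect to $\mathfrak{F}$. Once this dictionary is established, the three-way equivalence is inherited verbatim from Theorem~\ref{thm:f_mixing}.

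First I would verify that $\mathfrak{F}$ is admissible in Theorem~\ref{thm:f_mixing}, using the weakened inclusion noted following Theorem~\ref{thm:Q_f_mixing}. The strict inclusion $L_b(\X) \subset \mathfrak{F}$ fails for bounded Lipschitz functions of large sup-norm, but the relaxed requirement $\{f \in L_b(\X) : \|f\|_\infty \le 1\} \subset \mathfrak{F}$ is immediate: any continuous $f$ with $\|f\|_\infty \le 1$ satisfies $|f(x)| \le 1 \le 1 + \rho(x_0, x)^p$. Hypothesis~$(\mathbf{H_1})$ on $\mathfrak{F}$ is part of the standing assumptions of the present theorem.

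Next I would identify statement~(\runum{1}) with asymptotic stability of $\Phi$ with respect to $\mathfrak{F}$ in the sense of Definition~\ref{def:mixing}(\runum{1}). The continuous function $V(x) := 1 + \rho(x_0, x)^p$ belongs to $\mathfrak{F}$, so Hypothesis~$(\mathbf{H_1})$ applied to $V$ yields $\sup_{t \ge 0} \int V(y)\, P_t(x, \d y) < \infty$ for every $x \in \X$, hence each $P_t^*\delta_x$ has finite $p$-th moment. Starting from $\mathfrak{F}$-asymptotic stability, the requirement $\mathfrak{F} \subset L^1(\mu)$ (applied again to $V$) forces $\mu$ to have finite $p$-th moment as well, and Lemma~\ref{lem:equi_characterization_ofWasserstein_distance} upgrades the pointwise convergence $P_tf(x) \to \langle f,\mu\rangle$ for every $f \in \mathfrak{F}$ to $W_p(P_t^*\delta_x,\mu) \to 0$. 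Conversely, $W_p$-convergence of $P_t^*\delta_x$ to $\mu$ forces $\mu$ to have finite $p$-th moment, so $\mathfrak{F} \subset L^1(\mu)$, and Lemma~\ref{lem:equi_characterization_ofWasserstein_distance} converts $W_p$-convergence back to pointwise convergence on $\mathfrak{F}$. Invoking Theorem~\ref{thm:f_mixing} then transports the equivalence of (\runum{1}) with (\runum{2}) and (\runum{3}) from the $\mathfrak{F}$-asymptotic-stability formulation to the $W_p$-formulation.

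There is no serious obstacle; the argument is essentially bookkeeping, and the only subtlety, the joint finiteness of $p$-th moments of $\mu$ and of the laws $P_t^*\delta_x$, is handled in a unified way by applying Hypothesis~$(\mathbf{H_1})$ to the single test function $V(x) = 1 + \rho(x_0, x)^p$.
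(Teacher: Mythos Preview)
Your proposal is correct and follows exactly the paper's approach: the paper derives Theorem~\ref{thm:W_mixing} in one line, stating it is obtained ``by applying Theorem~\ref{thm:f_mixing} with the specific choice of test functions $\mathfrak{F}=\{f\in C(\X):|f(x)|\le 1+\rho(x_0,x)^p\}$,'' implicitly using Lemma~\ref{lem:equi_characterization_ofWasserstein_distance} as the dictionary. Your write-up is in fact more careful than the paper's, since you explicitly address the relaxed inclusion $\{f\in L_b(\X):\|f\|_\infty\le 1\}\subset\mathfrak{F}$ and the finiteness of $p$-th moments via $V(x)=1+\rho(x_0,x)^p\in\mathfrak{F}$, points the paper leaves tacit.
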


Below we present an example involving the stochastic Navier-Stokes equation. As a direct application of Theorem \ref{thm:W_mixing}, we establish asymptotic stability in the $W_p$ distance for any $p \ge 1$. 
We believe the method employed in this example is applicable to many other models as well.
\begin{example}
Recall the  two-dimensional stochastic Navier-Stokes equation with multiplicative noise posed on a bounded domain \( D \subset \mathbb{R}^2 \) with a smooth boundary \( \partial D \):
\begin{equation*}\label{eq:SNS}\tag{SNS}
\begin{cases}
\d\mathbf{u} + \mathbf{u} \cdot \nabla \mathbf{u} \, \d t = (\nu \Delta \mathbf{u} - \nabla p) \d t + \sum_{k=1}^{m} \sigma_k(\mathbf{u}) \, \d W_t^k, \\
\mathbf{u}_0 = x, \quad \nabla \cdot \mathbf{u} = 0, \quad \mathbf{u}|_{\partial D} = 0,
\end{cases}
\end{equation*}
where \( \mathbf{u} = (u_1, u_2) \) is the unknown velocity field, \( p \) is the unknown pressure, \( m \in \mathbb{N} \), \( W = (W^1, \ldots, W^m) \) is a standard \( m \)-dimensional Brownian motion, \( \sigma_1, \ldots, \sigma_m : H \to H \) are measurable mappings, \( \nu > 0 \).

Consider \eqref{eq:SNS} on the phase space
\begin{equation*}
H := \{ \mathbf{u} \in L^2(D)^2 : \nabla \cdot \mathbf{u} = 0, \, \mathbf{u} \cdot \mathbf{n} = 0 \},
\end{equation*}
where \( \mathbf{n} \) is the outward normal to \( \partial D \). Denote \( P_L \) as the orthogonal projection of \( L^2(D)^2 \) onto \( H \). Define
\begin{equation*}
V := \{ \mathbf{u} \in H^1(D)^2 : \nabla \cdot \mathbf{u} = 0, \, \mathbf{u}|_{\partial D} = 0 \}.
\end{equation*}
The norms associated with \( H \) and \( V \) are denoted by \( |\cdot| \) and \( \|\cdot\| \), respectively. 

The Stokes operator is \( A\mathbf{u} = -P_L \Delta \mathbf{u} \) for \( \mathbf{u} \in V \cap H^2(D)^2 \). \( A \) is self-adjoint with compact inverse, so \( A \) has eigenvalues \( \lambda_k \sim k \) (diverging to infinity) with eigenvectors \( e_k \) forming a complete orthonormal basis for \( H \). Let \( P_N \) (projection onto \( H_N = \operatorname{span}\{e_k : k=1,\ldots,N\} \)) and \( Q_N \) (orthogonal complement).

We impose the following hypotheses:
\begin{itemize}
   \item[$\mathbf{(A_1)}$]  \( \sigma = (\sigma_1, \ldots, \sigma_m) \) is bounded and Lipschitz: \( \exists\, B_0, L > 0 \) with
  \begin{equation*}
  |\sigma(\mathbf{u})|^2 = \sum_{k=1}^m |\sigma_k(\mathbf{u})|^2 \leq B_0 \, (\forall\, \mathbf{u} \in H), \quad |\sigma(\mathbf{u}) - \sigma(\mathbf{v})|^2 \leq L |\mathbf{u} - \mathbf{v}|^2 \, (\forall\, \mathbf{u}, \mathbf{v} \in H).
  \end{equation*}
  \item[$\mathbf{(A_2)}$] \( \exists\, N \in \mathbb{N} \) with \( P_N H \subset \operatorname{Range}(\sigma_k(\mathbf{u})) \, (\forall\, \mathbf{u} \in H, \, k=1,\ldots,m) \). Pseudo-inverses \( \sigma_k^{-1} : P_N H \to H \) are uniformly bounded: \( \exists\, C_0 \) with \( |\sigma(\mathbf{u})^{-1}(P_N \mathbf{w})| \leq C_0 |P_N \mathbf{w}| \, (\forall\, \mathbf{u}, \mathbf{w} \in H) \).
   \item[$\mathbf{(A_3)}$]\( \lambda_N > \frac{L}{\nu} + \frac{C_0^2}{\nu^3} B_0 \).
\end{itemize}

Under these assumptions, \eqref{eq:SNS} has a unique strong solution (argued as in \cite{Odasso2008}) and it was shown in \cite{Odasso2008} that for any initial condition \( x \in H \) this equation has a unique strong solution, which in the case of ambiguity will be denoted later by \( \mathbf{u}^x \). To conclude, the equation \eqref{eq:SNS} defines a Feller process $\mathbf{u}$ on $H$. The main result of this example is stated as follows.
   \begin{proposition}
       Under the hypotheses $\mathbf{(A_1)}$-$\mathbf{(A_3)}$, the Feller process $\mathbf{u}$ admits unique invariant measure $\mu\in\mathcal{P}(\X)$. Moreover, for any $p\ge 1$ and $x\in H$,
            \begin{equation*}
               \lim_{t\rightarrow\infty} W_p(P_t^*\delta_x,\mu)=0.
            \end{equation*}
   \end{proposition}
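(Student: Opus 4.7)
The plan is to apply Theorem~\ref{thm:W_mixing} with reference point $x_0=\mathbf{0}$ and test-function family $\mathfrak{F}_p=\{f\in C(H):|f(\mathbf{v})|\le 1+|\mathbf{v}|^p\}$, verifying condition (iii) at $z=\mathbf{0}$ for each $p\ge 1$ separately. Since the unique invariant measure produced by that theorem does not depend on the particular $p$, treating each $p$ in turn yields the full conclusion of the proposition. Three items must be checked: Hypothesis $\mathbf{(H_1)}$ for $\mathfrak{F}_p$, eventual continuity of $\mathbf{u}$ with respect to $\mathfrak{F}_p$ at $\mathbf{0}$, and the lower bound condition \eqref{eq:LBC_P} at $\mathbf{0}$.

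Hypothesis $\mathbf{(H_1)}$ reduces to polynomial moment bounds. Applying It\^o's formula to $|\mathbf{u}_t|^{2k}$ with $2k>p$ and using the cancellation $\langle\mathbf{u}\cdot\nabla\mathbf{u},\mathbf{u}\rangle=0$, the Poincar\'e inequality $\|\mathbf{u}\|^2\ge\lambda_1|\mathbf{u}|^2$ from the Dirichlet boundary condition, together with the uniform noise bound $|\sigma(\mathbf{u})|^2\le B_0$ from $\mathbf{(A_1)}$, one obtains $\mathcal{L}|\cdot|^{2k}\le -\alpha_k|\cdot|^{2k}+\beta_k$ for suitable positive constants $\alpha_k,\beta_k$. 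Proposition~\ref{lem:S_Lypv} with $V=|\cdot|^{2k}$ and linear $\varphi$ then delivers $\sup_{t\ge 0}\mathbb{E}|\mathbf{u}_t^x|^{2k}<\infty$, and Lemma~\ref{lem:boundedness_implies_UI} upgrades this to $\mathbf{(H_1)}$. For the eventual continuity at $\mathbf{0}$, I follow the Girsanov-based asymptotic coupling of \cite{Odasso2008}: given $x'$ close to $\mathbf{0}$, modify the noise driving the second copy by an adapted low-mode shift so that the low-mode difference $P_N(\mathbf{u}_t^{x'}-\mathbf{u}_t^{\mathbf{0}})$ is driven to zero after a finite random time; this is feasible thanks to the nondegeneracy on $P_NH$ together with the uniform bound $|\sigma^{-1}P_N\cdot|\le C_0|\cdot|$ from $\mathbf{(A_2)}$. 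The spectral-gap inequality $\lambda_N>L/\nu+C_0^2B_0/\nu^3$ of $\mathbf{(A_3)}$ is precisely what makes the remaining high-mode difference $Q_N(\mathbf{u}_t^{x'}-\mathbf{u}_t^{\mathbf{0}})$ contract exponentially in $L^2$. Truncating onto $\{|\mathbf{u}_t^{x'}|\vee|\mathbf{u}_t^{\mathbf{0}}|\le R\}$, combining with the uniform integrability from the previous step and with the continuity of each $f\in\mathfrak{F}_p$ on bounded sets, yields $\limsup_{x'\to\mathbf{0}}\limsup_{t\to\infty}|P_tf(x')-P_tf(\mathbf{0})|=0$ for every $f\in\mathfrak{F}_p$.

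The main obstacle is verifying \eqref{eq:LBC_P} at $z=\mathbf{0}$. The step-one moment bound gives $\limsup_t\mathbb{E}|\mathbf{u}_t^x|^2\le M$ uniformly in $x$, so Markov's inequality only delivers $\inf_x\liminf_t P_t(x,B(\mathbf{0},R))>0$ for $R>\sqrt{M}$. To descend to arbitrary $r>0$, the coupling contraction of step two bounds
$P_t(x,B(\mathbf{0},r))\ge P_t(\mathbf{0},B(\mathbf{0},r/2))-\mathbb{P}(|\mathbf{u}_t^{x}-\mathbf{u}_t^{\mathbf{0}}|\ge r/2)$,
and the strong Markov property applied after the process started at $x$ first enters $B(\mathbf{0},R)$ makes the second term small uniformly in $x$. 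The argument then reduces to $\liminf_t P_t(\mathbf{0},B(\mathbf{0},r/2))>0$ for every $r>0$, which I expect to be the most delicate ingredient: it can be established by a support/controllability argument around the fixed point $\mathbf{0}$ of the unperturbed Navier--Stokes dynamics, using the nondegeneracy of the noise on $P_NH$ from $\mathbf{(A_2)}$ to produce a positive density of times at which $P_t(\mathbf{0},\cdot)$ charges any neighborhood of $\mathbf{0}$. Once these three items are in hand, Theorem~\ref{thm:W_mixing} yields the unique invariant measure $\mu$ and the convergence $W_p(P_t^*\delta_x,\mu)\to 0$ for every $p\ge 1$ and $x\in H$.
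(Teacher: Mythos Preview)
Your strategy is sound and would succeed, but the paper takes a more economical route at each of the three steps. The central difference is the moment estimate: rather than polynomial moments $\sup_t\E|\mathbf{u}_t^x|^{2k}$, the paper proves directly the exponential bound $\sup_{t\ge0}\E\exp(\varepsilon|\mathbf{u}_t^x|^2)<\infty$ via It\^o's formula applied to $e^{\varepsilon|\cdot|^2}$. This single estimate yields $(\mathbf{H_1})$ for every $p\ge1$ at once and, crucially, gives control of the tail $\E[V(\mathbf{u}_t^y)\chi_{\{V>K\}}]$ that is locally uniform in the initial point $y$, which is exactly what is needed below.

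For eventual continuity with respect to $\mathfrak{F}_p$ the paper does not rerun the Girsanov coupling. It splits $f=\underline{f}+\overline{f}$ with $\underline{f}$ the truncation of $f$ at level $K$, invokes the already established eventual continuity with respect to $L_b(H)$ from \cite{Liu2023} for the bounded piece, and bounds $|P_t\overline{f}(y)|\le\E[V^2(\mathbf{u}_t^y)]/K$ uniformly for $y$ near $x$ using the exponential moments. This truncation device is much shorter than redoing the coupling for unbounded test functions; your proposal essentially reconstructs the input to the cited $L_b$ result and then adds a similar truncation, so the net content is the same but with more work.

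Your treatment of \eqref{eq:LBC_P} is the softest part. The inequality $P_t(x,B(\mathbf{0},r))\ge P_t(\mathbf{0},B(\mathbf{0},r/2))-\P(|\mathbf{u}_t^{x}-\mathbf{u}_t^{\mathbf{0}}|\ge r/2)$, combined with a strong Markov restart inside $B(\mathbf{0},R)$, requires the coupling to contract starting from \emph{every} $y\in B(\mathbf{0},R)$, not merely from points near $\mathbf{0}$; and the small-ball estimate $\liminf_t P_t(\mathbf{0},B(\mathbf{0},r/2))>0$ still needs a genuine controllability argument. The paper bypasses all of this by citing the scheme of \cite[Proposition~7.1]{Liu2023} (energy estimate plus irreducibility implies \eqref{eq:LBC_P}), with irreducibility obtained as in \cite{DongPeng2024}. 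That route is both shorter and free of the uniformity issues in your sketch.
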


\begin{proof}
The proof is divided into two parts, consisting of the verification of the lower bound condition and the eventual continuity.

\vspace{2mm}

\noindent\textbf{Lower bound condition.}
 We first establish a useful energy estimate, which serves as a Lyapunov functional for the system. Namely, we derive that there exists $\varepsilon>0$ such that for all $x\in H$
\begin{equation}\label{eq NS energy}
    \sup_{t\ge0}\E e^{\varepsilon\vert \mathbf{u}^x_t\vert^2}<\infty.
\end{equation}
 
 \vspace{1mm}
 
 To prove \eqref{eq NS energy}, using It\^{o}'s formula, one has
    \begin{align*}
        \d |\mathbf{u^x}|^2 =&-2\nu |\nabla\mathbf{u}^x|^2\d t +  2\sum_{k=1}^{m} \ \langle \mathbf{u}^x, \sigma_k(\mathbf{u}^x) \rangle\, \d W_t^k + \sum_{k=1}^{m} \ |\sigma_k(\mathbf{u}^x)|^2\d t\\
        \le&(-2\nu\lambda|\mathbf{u}^x|^2+B_0)\d t+2\sum_{k=1}^{m} \ \langle \mathbf{u}^x, \sigma_k(\mathbf{u}^x) \rangle\, \d W_t^k,
    \end{align*}
Furthermore, when $\varepsilon$ is sufficiently small, there exists $C_1,C_2>0$ such that 
    \begin{equation*}\label{eq:SNS_Ito}
        \begin{aligned}
            &{\d} e^{\varepsilon|\mathbf{u}^x|^2} \\
        ={} &\varepsilon e^{\varepsilon|\mathbf{u}^x|^2}
        \left(\d |\mathbf{u}^x|^2+ \frac{\varepsilon}{2}\d\langle |\mathbf{u}^x|^2 \rangle \right)\\
        ={}&\varepsilon e^{\varepsilon|\mathbf{u}^x|^2}\left[ -2\nu |\nabla\mathbf{u}^x|^2\d t +  2\sum_{k=1}^{m}  \langle \mathbf{u}^x, \sigma_k(\mathbf{u}^x) \rangle\, \d W_t^k + \sum_{k=1}^{m} |\sigma_k(\mathbf{u}^x)|^2\d t 
        + 2\varepsilon \sum_{k=1}^{m}  \left(\langle \mathbf{u}^x, \sigma_k(\mathbf{u}^x)\rangle\right)^2\d t \right]\\
        \le{}&\varepsilon \left[-2(\nu \lambda-\varepsilon B_0)|\mathbf{u}^x|^2 +B_0\right]e^{\varepsilon|\mathbf{u}^x|^2}\d t+\d M_t\\
        \le{}& C_1 -C_2 e^{\varepsilon|\mathbf{u}^x|^2}+ \d M_t,
        \end{aligned}
    \end{equation*}
    where $M_t=\sum_{k=1}^m\int_0^t\varepsilon e^{\varepsilon|\mathbf{u}^x(s)|^2}\langle\mathbf{u}^x(s),\sigma_k(\mathbf{u}^x(s))\rangle \d W_s^k$ is a local martingale.
    Therefore, there exists a sequence of nondeceasing random variables $\{T_n\}_{n=1}^\infty$ with $\lim\limits_{n\to\infty}T_n=\infty$ a.s. such that $M_{t\wedge T_n}$ is a martingale for all $n\in\N$. 
    Hence, proposition \ref{lem:S_Lypv}, this implies the desired inequality \eqref{eq NS energy}. 

Now given $p\geq 1$, let us consider the family 
\begin{equation*}
\mathfrak{F}=\{f\in C(H):|f(x)|\le 1+\|x\|^p\}.
\end{equation*} 
Note that for all $p\ge1$, there exists $C_p>0$ such that $\sup_{a\in\R}|a|^p/e^{a^2}\le C_p$. 
Combining Lemma \ref{lem:boundedness_implies_UI} with estimate \eqref{eq NS energy}, one sees that $\mathfrak{F}$ and the fact that satisfies Hypothesis $\mathbf{(H_1)}$.

Additionally, following the arguments as in \cite[Proposition 7.1]{Liu2023}, lower bound condition \eqref{eq:LBC_P} can be guaranteed by the energy estimate \eqref{eq NS energy} combined with a form of irreducibility. Noting that the associated irreducibility can be derived 
by similar arguments in \cite[Proposition 1.11]{DongPeng2024}, we therefore obtain the lower bound condition \eqref{eq:LBC_P}.

\vspace{2mm}
\noindent\textbf{Eventual continuity.} The second step is to derive the eventual continuity with respect to $\mathfrak{F}$. For \(f \in \mathfrak{F}\), denote
\begin{equation*}
    \underline{f}=f\chi_{\{|f|\le K\}}+K\chi_{\{f> K\}}-K\chi_{\{f<- K\}},\quad \overline{f} = f-\underline{f},
\end{equation*}
where $K\ge0$ will be determined later.
Hence,  $f =\underline{f}+\overline{f}$ and
    \begin{align*}
        &\lim_{y\to x}\limsup\limits_{t\to\infty}|P_t f(x) - P_t f(y)|\\
        \leq& \lim_{y\to x}\limsup\limits_{t\to\infty}|P_t \underline{f}(x) - P_t \underline{f}(y)|+\lim_{y\to x}\limsup\limits_{t\to\infty}|P_t \overline{f}(x)-P_t \overline{f}(y)|.
    \end{align*}
It is already proven that the system is eventually contiunous with respect to $L_b(H)$ in weak topology, see e.g. \cite[Proposition 7.6]{Liu2023}. Since $\underline{f}$ is a bounded Lipchitz function, we have
\begin{equation*}
    \lim_{y\to x}\limsup\limits_{t\to\infty}|P_t \underline{f}(x) - P_t \underline{f}(y)|=0
\end{equation*}
Let $V(x)=1+|x|^p$, then
\begin{equation*}
    | P_t \overline{f}(x)|\le \E\left[V(\mathbf{u}^x_t)\chi_{\{V(\mathbf{u}^x_t)>K\}}\right] \le  \E\left[V^2(\mathbf{u}^x_t)\right]^{1/2}\P(V(\mathbf{u}^x_t)\ge K)^{1/2}\le \frac{\sup_{t\ge0}\E\left[V^2(\Phi^x_t)\right]}{K}.
\end{equation*}

There exists $\tilde{C}>0$ independent of $x$ such that \begin{equation*}
\sup_{t\ge0}\E\left[V^2(\mathbf{u}^x_t)\right]\le1+2\sup_{t\ge0}E\left[|\mathbf{u}^x_t|^{p}\right]+\sup_{t\ge0}E\left[|\mathbf{u}^x_t|^{2p}\right]<\tilde{C}e^{\varepsilon|x|^2}.
\end{equation*}
Given $\varepsilon>0$, for $K\ge3\tilde{C}e^{\varepsilon|x|^2}/\varepsilon$
\begin{equation*}
     | P_t \overline{f}(x)|\le \varepsilon/3.
\end{equation*}
For this $\varepsilon$, there exists $r\ge0$ such that for any $y\in B(x,r)$, $\tilde{C}e^{\varepsilon|y|^2}\le 2\tilde{C}e^{\varepsilon|x|^2}$
\begin{equation*}
    | P_t \overline{f}(y)|\le \frac{\sup_{t\ge0}\E\left[V^2(\Phi^y_t)\right]}{K}\le\frac{2\tilde{C}e^{\varepsilon|x|^2}}{K}\le\frac{2\varepsilon}{3}.
\end{equation*}
To conclude, we get \begin{equation*}
    \lim_{y\to x}\limsup\limits_{t\to\infty}|P_t f(x) - P_t f(y)|\le \varepsilon.
\end{equation*}
Since $\varepsilon$ is arbitrary, 
\[
\limsup_{y\to x} \limsup_{t\to\infty} |P_t f(x) - P_t f(y)| = 0,
\]
proving eventual continuity w.r.t $\mathfrak{F}$ for $p\ge1$.

\end{proof}
\end{example}

\vspace{2mm}
\subsection{Asymptotic stability and mean ergodicity in (weighted) total variation distance}
At the beginning of this subsection, we provide an application of Theorem \ref{thm:f_uniform_mixing}, which further derives the equivalent characterization of asymptotical stability in total variation distance.\vspace{2mm}

To start, let $d$ be a bounded lower semi-continuous metric on $(\X,\rho)$. Define
    \begin{equation*}
        \|f\|_{d,Lip}:=\sup_{x\neq y}\frac{\left\vert f(x)-f(y)\right\vert}{d(x,y)}
    \end{equation*}
    Fix some $x_0\in\X$, define \begin{equation*}
        \mathfrak{F}_d:=\{f\in B(\X):f(x_0)=0,\|f\|_{d,Lip}\le1\}.
    \end{equation*} 
     Then for all $f\in\mathfrak{F}_d$, $|f(x)|=|f(x)-f(x_0)|\le d(x,x_0)$ and
        \begin{equation*}
            \|f\|_{\infty}\le \sup_{x\in\X}d(x,x_0)=M_0<\infty.
        \end{equation*}
    The last inequality is due to the boundedness of $d$. This implies that $\mathfrak{F}_d\subset\{f\in\mathfrak{F}:\|f\|_\infty\le M_0\}$ which ensures $\mathfrak{F}_d$ satisfies Hypothesis $\mathbf{(H_1)}$.
    By Kantorovich-Rubinstein duality principle (see e.g. \cite[Theorem 5.1]{Villani2009}), \begin{equation*}
        W_{d,1}(\mu,\nu)=\sup_{\|f\|_{d,Lip}\le1}|\langle f,\mu\rangle-\langle f,\nu\rangle|.
    \end{equation*}
    Since \begin{equation*}
        |\langle f+c,\mu\rangle-\langle f+c,\nu\rangle|=|\langle f,\mu\rangle-\langle f,\nu\rangle|,\quad \forall\, c\in\R,
    \end{equation*} we have
    \begin{equation}\label{eq:convergence_under_W_d1}
        W_{d,1}(\mu,\nu)=\sup_{f\in\mathfrak{F}_d}|\langle f,\mu\rangle-\langle f,\nu\rangle|.
    \end{equation}
    Hence uniformly asymptotic stability w.r.t $\mathfrak{F}_d$ is equivalent to asymptotic stability under $W_{d,1}$ distance. As a concrete application of Theorem~\ref{thm:f_uniform_mixing}, we have the following.
    
    \begin{theorem}
        Let $\mathfrak{F}_d$ satisfy $\{f\in L_b(\X):\|f\|_{\infty}\le r\}\subset\mathfrak{F}_d$ for some $r>0$. Then the following three statements are equivalent.
        \begin{itemize}
			\item [(\runum{1})] $\Phi$ admits a unique invariant measure $\mu\in\mathcal{P}(\X)$, and:
            \begin{equation}\label{eq:initial_distribution}
            \lim\limits_{t\to\infty}W_{d,1}(P_t^*v,\mu)=0,\quad \forall\nu\in \mathcal{P}(
            X
            ).
            \end{equation}
			\item[(\runum{2})] $\Phi$ is uniformly eventually continuous with respect to $\mathfrak{F}_d$ on $\X$, and there exists $z\in\X$ such that $\Phi$ satisfies the lower bound condition \eqref{eq:LBC_P} at $z$.
            \item[(\runum{3})] There exists $z\in\X$ such that $\Phi$ is uniformly eventually continuous with respect to $\mathfrak{F}_d$ at $z$ and satisfies the lower bound condition \eqref{eq:LBC_P} at $z$.
		\end{itemize}
    \end{theorem}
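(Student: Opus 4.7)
The plan is to treat this theorem as essentially a packaging of Theorem~\ref{thm:f_uniform_mixing} applied to the specific family $\mathfrak{F}_d$, together with the duality identity~\eqref{eq:convergence_under_W_d1} and a one-line dominated-convergence argument to upgrade from Dirac initial data to arbitrary $\nu \in \mathcal{P}(\X)$. Under this scheme, the equivalence of~(ii) and~(iii) becomes a direct quotation of Theorem~\ref{thm:f_uniform_mixing}, and the only genuine content is the equivalence of these statements with~(i).

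First I would verify that $\mathfrak{F}_d$ satisfies the hypotheses of Theorem~\ref{thm:f_uniform_mixing}. Hypothesis $\mathbf{(H_2)}$ is immediate: since $d$ is bounded on $\X$, every $f \in \mathfrak{F}_d$ obeys $|f(x)| \le \sup_{x\in\X} d(x,x_0) =: M_0 < \infty$, so the constant function $V \equiv M_0$ is an $\mathcal{A}$-valued majorant and $\{V(\Phi_t)\}_{t\ge 0}$ is trivially uniformly integrable. To reconcile the normalization $\{f \in L_b(\X) : \|f\|_\infty \le 1\} \subset \mathfrak{F}$ required by Theorem~\ref{thm:f_uniform_mixing} with the given hypothesis $\{f \in L_b(\X) : \|f\|_\infty \le r\} \subset \mathfrak{F}_d$ for some $r > 0$, I would pass, when needed, to the rescaled family $\tilde{\mathfrak{F}}_d := (1/r)\,\mathfrak{F}_d$, noting that multiplication by a positive constant preserves uniform eventual continuity, uniform asymptotic stability, and the majorization condition in $\mathbf{(H_2)}$ (with $V$ replaced by $M_0/r$). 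Theorem~\ref{thm:f_uniform_mixing} then yields that (ii), (iii), and uniform asymptotic stability of $\Phi$ with respect to $\mathfrak{F}_d$ are mutually equivalent.

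It remains to identify uniform asymptotic stability with respect to $\mathfrak{F}_d$ with statement~(i). The duality~\eqref{eq:convergence_under_W_d1} gives immediately
\begin{equation*}
    \sup_{f \in \mathfrak{F}_d} |P_t f(x) - \langle f, \mu\rangle| = W_{d,1}(P_t^* \delta_x, \mu),
\end{equation*}
so uniform asymptotic stability with respect to $\mathfrak{F}_d$ is exactly the case $\nu = \delta_x$ of~(i). For the remaining step I would estimate, via Fubini and the duality,
\begin{equation*}
    W_{d,1}(P_t^*\nu, \mu) = \sup_{f \in \mathfrak{F}_d} \left| \int_{\X} \bigl(P_t f(x) - \langle f, \mu\rangle\bigr)\,\nu(\d x) \right| \le \int_{\X} W_{d,1}(P_t^* \delta_x, \mu)\,\nu(\d x),
\end{equation*}
where the integrand is uniformly bounded by $2M_0$ and converges pointwise to $0$; dominated convergence then yields $W_{d,1}(P_t^*\nu, \mu) \to 0$ for every $\nu \in \mathcal{P}(\X)$. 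The reverse implication (from~(i) back to the Dirac case) is trivial by specializing $\nu = \delta_x$. The main bookkeeping obstacle is purely the rescaling step when $r \neq 1$, together with the swap of supremum and integral in the duality; beyond that the argument is a clean reduction to Theorem~\ref{thm:f_uniform_mixing}.
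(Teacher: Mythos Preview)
Your proposal is correct and follows essentially the same route as the paper: verify $\mathbf{(H_2)}$ via the uniform bound $M_0$, invoke Theorem~\ref{thm:f_uniform_mixing} to get the equivalence of (ii), (iii), and uniform asymptotic stability with respect to $\mathfrak{F}_d$, then use the duality~\eqref{eq:convergence_under_W_d1} together with the bound $\sup_{f\in\mathfrak{F}_d}|P_tf(x)-\langle f,\mu\rangle|\le 2M_0$ and dominated convergence to pass from Dirac initial data to arbitrary $\nu$. Your explicit handling of the rescaling when $r\neq 1$ is in fact more careful than the paper, which applies Theorem~\ref{thm:f_uniform_mixing} directly without commenting on the normalization.
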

    \begin{proof}
        $\mathfrak{F}$ satisfies Hypothesis $(\mathbf{H_2})$ since for all $f\in\mathfrak{F}$, $\Vert f\Vert_\infty\le M_0$. 
        By Theorem~\ref{thm:f_uniform_mixing}, we have $(i)\implies(ii)$ and $(ii)\implies(iii)$. 
        
        When $(iii)$ holds true, according to Theorem~\ref{thm:f_uniform_mixing}, we have unique invariant measure $\mu\in\mathcal{P}(\X)$ such that for all $x\in\X$,
        \begin{align*}
            \lim_{t\ge0}\sup_{f\in\mathfrak{F}_d}| P_tf(x)-\langle f,\mu\rangle|=0.
        \end{align*}
        Note that \begin{align*}
            \limsup_{t\ge0}\sup_{f\in\mathfrak{F}_d}|\langle f,P_t^*f\rangle-\langle f,\mu\rangle|=&\limsup_{t\ge0}\sup_{f\in\mathfrak{F}_d}|\int_\X P_tf(x)-\langle f,\mu\rangle\nu(\d x)|\\
            \le &\limsup_{t\ge0}\int_\X\sup_{f\in\mathfrak{F}_d}| P_tf(x)-\langle f,\mu\rangle|\nu(\d x).
        \end{align*}
        Since
        \begin{equation*}
            \sup_{f\in\mathfrak{F}_d}| P_tf(x)-\langle f,\mu\rangle|\le2M_0,
        \end{equation*}
        we have 
        \begin{align*}
            \limsup_{t\ge0}\sup_{f\in\mathfrak{F}_d}|\langle f,P_t^*\nu\rangle-\langle f,\mu\rangle|            
            \le \int_\X\limsup_{t\ge0}\sup_{f\in\mathfrak{F}_d}| P_tf(x)-\langle f,\mu\rangle|\nu(\d x)=0.
        \end{align*}
        This combining \eqref{eq:convergence_under_W_d1} completes the proof.
    \end{proof}
    \begin{remark}
    In  \eqref{eq:initial_distribution}, the convergence is valid for any initial distribution. This statement relies on the boundedness of the test function family $\mathfrak{F}_d$. It is important to note that, in general, the convergence in part \runum{1} of our main results does not hold for arbitrary initial distributions. A counterexample validating this claim is presented in Section \ref{Subsec: H1_necessary}.
\end{remark}
   In particular, if we take $d(x,y)=\chi_{\{x=y\}}$, then $W_{d,1}$ distance is equivalent to total variation distance, which implies the following theorem.

    \begin{theorem}
		\label{thm:tv_mixing} 
		  Let $\mathfrak{F}=\{f\in B(\X):\Vert f\Vert_\infty\le1\}$. Then the following three statements are equivalent: 
		\begin{itemize}
			\item [(\runum{1})] There exists unique invariant probability measure $\mu$ for $\Phi$ such that $\forall\,\nu\in\mathcal{P}(\X)$, 
        \begin{equation*}
            \lim\limits_{t\to\infty}\|P_t^*\nu-\mu\|_{TV}=0.
        \end{equation*}
			\item[(\runum{2})] $\Phi$ is uniformly eventually continuous with respect to $\mathfrak{F}$ on $\X$, and there exists $z\in\X$ such that $\Phi$ satisfies the lower bound condition \eqref{eq:LBC_P} at $z$.
            \item[(\runum{3})] There exists $z\in\X$ such that $\Phi$ is uniformly eventually continuous with respect to $\mathfrak{F}$ at $z$ and satisfies the lower bound condition \eqref{eq:LBC_P} at $z$.
		\end{itemize}
	\end{theorem}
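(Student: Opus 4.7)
The plan is to derive Theorem~\ref{thm:tv_mixing} as a direct specialization of Theorem~\ref{thm:f_uniform_mixing} with the test family $\mathfrak{F}=\{f\in B(\X):\|f\|_\infty\le 1\}$. The key observation is the dual representation of the total variation distance: convergence $\|\nu_n-\nu\|_{TV}\to 0$ in $\mathcal{P}(\X)$ is equivalent to $\sup_{f\in\mathfrak{F}}|\langle f,\nu_n\rangle-\langle f,\nu\rangle|\to 0$, up to a convention-dependent multiplicative constant. This is the same mechanism already exploited in the $W_{d,1}$ theorem preceding the statement, now applied with $d(x,y)=\chi_{\{x\neq y\}}$.

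First, I would verify that $\mathfrak{F}$ fits the hypotheses of Theorem~\ref{thm:f_uniform_mixing}. The inclusion $\{f\in L_b(\X):\|f\|_\infty\le 1\}\subset\mathfrak{F}$ is immediate. Hypothesis $\mathbf{(H_2)}$ is satisfied with the trivial dominating function $V\equiv 1$: clearly $|f|\le V$ for every $f\in\mathfrak{F}$, the constant $V$ lies in $\mathcal{A}$, and the family $\{V(\Phi_t)\}_{t\ge 0}$ of almost-sure constants is uniformly integrable under every $\P^x$.

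Next, I would translate between convergence modes. By the dual representation, uniform asymptotic stability with respect to $\mathfrak{F}$ in the sense of Definition~\ref{def:mixing}(iii) is exactly $\|P_t(x,\cdot)-\mu\|_{TV}\to 0$ for every $x\in\X$. To lift pointwise-in-$x$ convergence to convergence from an arbitrary initial distribution $\nu\in\mathcal{P}(\X)$, I would combine Fubini with dominated convergence: since
\begin{equation*}
\sup_{f\in\mathfrak{F}}|\langle f,P_t^*\nu\rangle-\langle f,\mu\rangle|\le\int_{\X}\sup_{f\in\mathfrak{F}}|P_tf(x)-\langle f,\mu\rangle|\,\nu(\d x)
\end{equation*}
and the integrand is bounded uniformly by $2$, dominated convergence yields $\|P_t^*\nu-\mu\|_{TV}\to 0$ from the pointwise statement; conversely, the choice $\nu=\delta_x$ recovers the pointwise form. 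This gives the equivalence between statement (\runum{1}) and uniform asymptotic stability with respect to $\mathfrak{F}$.

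Finally, invoking the equivalences (\runum{1})$\Leftrightarrow$(\runum{2})$\Leftrightarrow$(\runum{3}) of Theorem~\ref{thm:f_uniform_mixing} for this $\mathfrak{F}$ delivers the three-way equivalence claimed in Theorem~\ref{thm:tv_mixing}. There is no genuine obstacle here: the reduction is essentially bookkeeping, since $\mathbf{(H_2)}$ trivializes on a uniformly bounded family and the dual representation of the total variation distance handles the passage to a test-function formulation. All substantive analysis has already been absorbed into Theorem~\ref{thm:f_uniform_mixing}.
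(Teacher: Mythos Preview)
Your proposal is correct and matches the paper's approach in substance. The paper derives Theorem~\ref{thm:tv_mixing} as the special case $d(x,y)=\chi_{\{x\neq y\}}$ of the preceding $W_{d,1}$ theorem, which in turn is proved exactly as you outline: verify $\mathbf{(H_2)}$ via the uniform bound, invoke Theorem~\ref{thm:f_uniform_mixing}, and upgrade from $\nu=\delta_x$ to arbitrary $\nu$ by dominated convergence using the bound $\sup_{f\in\mathfrak{F}}|P_tf(x)-\langle f,\mu\rangle|\le 2$; you simply bypass the intermediate $W_{d,1}$ layer and apply Theorem~\ref{thm:f_uniform_mixing} directly, which is harmless.
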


    Recall that Doob's theorem provides a specific sufficient condition for asymptotic stability in total variation distance. Actually, we can derive it from Theorem \ref{thm:f_uniform_mixing}:
    \begin{corollary}[{\cite[Theorem 1]{KulikScheutzow2015}}]
    \label{thm:Doob's_theorem}
        If stochastically continuous Markov process $\{\Phi_t\}_{t\ge0}$ is $t_0$-regular for some $t_0\ge0$ on Polish space $(\X,\rho)$ and has an invariant probability measure $\mu$, then $\mu$ is unique and all transition probabilities converge to $\mu$ in total variation distance.
    \end{corollary}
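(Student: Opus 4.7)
The plan is to derive the corollary as a direct application of Theorem~\ref{thm:tv_mixing}(\runum{3}) by choosing a distinguished point $z \in \supp \mu$ (nonempty since $\mu$ is a probability measure on the Polish space $\X$) and verifying both uniform eventual continuity and the lower bound condition \eqref{eq:LBC_P} at $z$ for the test family $\mathfrak{F} = \{f \in B(\X) : \|f\|_\infty \le 1\}$.

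The key auxiliary claim I would first establish is that $t_0$-regularity forces
\[
\lim_{t \to \infty} \|P_t(x, \cdot) - P_t(y, \cdot)\|_{TV} = 0, \qquad \forall\, x, y \in \X.
\]
The function $t \mapsto \|P_t(x, \cdot) - P_t(y, \cdot)\|_{TV}$ is non-increasing by the general contractivity of Markov kernels in total variation. The $t_0$-regularity (mutual absolute continuity of the kernels $P_{t_0}(x, \cdot)$ across $x \in \X$) supplies a maximal coupling on each time interval of length $t_0$ under which the two chains meet with strictly positive probability; iterating this coupling and exploiting stochastic continuity to interpolate to non-multiples of $t_0$ drives the TV distance to zero. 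This coupling argument, essentially a reformulation of the classical Doob--Khas'minskii argument, is the central technical difficulty of the proof; once it is settled, the rest reduces to bookkeeping within our framework.

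Given the auxiliary claim, uniform eventual continuity at $z$ is immediate: $\sup_{\|f\|_\infty \le 1} |P_t f(x') - P_t f(z)|$ coincides (up to a constant) with $\|P_t(x', \cdot) - P_t(z, \cdot)\|_{TV}$, and the inner $\limsup_{t \to \infty}$ in Definition~\ref{def:F_uEvC} already vanishes for each fixed $x'$, so the outer $\limsup_{x' \to z}$ is trivially zero. For \eqref{eq:LBC_P}, fix $r > 0$; since $z \in \supp \mu$, $\mu(B(z, r)) > 0$. Invariance gives $\mu(B(z, r)) = \int_\X P_t(y, B(z, r))\, \mu(\d y)$, so
\[
|P_t(x, B(z, r)) - \mu(B(z, r))| \le \int_\X \|P_t(x, \cdot) - P_t(y, \cdot)\|_{TV}\, \mu(\d y) \longrightarrow 0 \quad \text{as } t \to \infty
\]
by the auxiliary claim and dominated convergence. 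Hence $\liminf_{t \to \infty} P_t(x, B(z, r)) = \mu(B(z, r))$ for every $x$, so $\inf_{x \in \X} \liminf_{t \to \infty} P_t(x, B(z, r)) = \mu(B(z, r)) > 0$. Theorem~\ref{thm:tv_mixing}(\runum{3}) then yields both the uniqueness of $\mu$ and the convergence $\|P_t(x, \cdot) - \mu\|_{TV} \to 0$ for every $x \in \X$; the extension to arbitrary initial distributions $\nu$ follows by integration against $\nu$ and dominated convergence, exactly as in the $W_{d,1}$-version established earlier in this subsection.
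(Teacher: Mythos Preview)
Your proposal is correct and follows essentially the same route as the paper: both verify condition~(\runum{3}) of Theorem~\ref{thm:tv_mixing} at a point $z\in\supp\mu$, obtaining uniform eventual continuity from the Kulik--Scheutzow coupling that drives $\|P_t(x,\cdot)-P_t(y,\cdot)\|_{TV}\to 0$. Your derivation of \eqref{eq:LBC_P} via the integral bound $|P_t(x,B(z,r))-\mu(B(z,r))|\le\int_\X\|P_t(x,\cdot)-P_t(y,\cdot)\|_{TV}\,\mu(\d y)$ is in fact more self-contained than the paper's, which somewhat elliptically points to the (\runum{1})$\Rightarrow$(\runum{2}) step of Theorem~\ref{thm:f_uniform_mixing}---a step that already presupposes asymptotic stability and so does not stand on its own here.
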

    \begin{proof}
        We verify the conditions  in $(iii)$ of Theorem \ref{thm:tv_mixing}. Due to the existence of invariant measure $\mu$, as in the proof of Theorem~\ref{thm:f_uniform_mixing} in Section~\ref{Sec:4}, lower bound condition \eqref{eq:LBC_P} hold at $z$ once $z\in\supp(\mu)$. 

        To verify the uniformly eventual continuity w.r.t $\mathfrak{F}$, we use coupling argument along the lines in \cite{KulikScheutzow2015}. \begin{equation*}
            \lim_{n\to\infty}\P(Z_n^1\neq Z_n^2)=0.
        \end{equation*}
        \begin{equation*}
            \sup_{f\in\mathfrak{F}}|P_nf(x)-P_nf(y)|
            \le\sup_{f\in\mathfrak{F}}\E|f(Z_n^1)-f(Z_n^2)|=\sup_{f\in\mathfrak{F}}\E|f(Z_n^1)-f(Z_n^2)\chi_{\{Z_n^1\neq Z_n^2\}}|\le2\P(Z_n^1\neq Z_n^2).
        \end{equation*}
    \end{proof}
    \vspace{3mm}

Weighted norm of probability is also frequently used in literature, especially for finite denominational systems \cite{MeynTweedie2009,HairerMattingly2011}. 

\begin{definition}\label{def:V_distance} Let  $V:\X\to\R_+$ be a continuous function. 
The \emph{$V$-weighted norm} of $\mu\in\mathcal{P}(\X)$ is defined by
    \begin{equation*}
        \Vert \mu\Vert_{V} := 1+\int_{\X}V(x)\mu(\d x)
    \end{equation*}
\end{definition}
Define the subspace of probability measure by
\begin{equation*}
    \mathcal{P}_V(\X):=\{\mu\in\mathcal{P}(\X):\Vert \mu\Vert_{V}<\infty\}
\end{equation*}
Note that we have the characterization 
\begin{equation*}
    d_V(\mu,\nu):=\Vert \mu-\nu\Vert_V= \inf_{\pi \in \mathscr{C}(\mu,\nu)}\int_{\{x\neq y\}}(2+V(x)+V(y))\pi(\d x,\d y),\quad \forall\,\mu,\nu\in\mathcal{P}_V(\X).
\end{equation*}

This implies that $d_V$ can be interpreted as a special case of $1$-Wasserstein distance defined in Definition \ref{def:W_distance} with $d(x,y) = \chi_{\{x\neq y\}}(2+V(x)+V(y))$.

With the preparation above, we provide an application of Theorem~\ref{thm:Q_f_uniform_mixing}, which derives an equivalent characterization of mean ergodicity under $d_V$.
    
    \begin{theorem}
        \label{thm: V_mixing} 
		  Let $\mathfrak{F}=\{f\in B(\X):|f(x)|\le1+V(x)\}$ satisfy Hypothesis $(\mathbf{H_2})$. Then the  following three statements are equivalent: 
		\begin{itemize}
			\item [(\runum{1})] $\Phi$ admits a unique invariant measure $\mu\in\mathcal{P}(\X)$ with $\mathfrak{F}\subset L^1(\mu) $ and for all $x\in\X$,
            \begin{equation*}
                \lim_{t\to\infty}d_V(Q_t^*\delta_x,\mu)=0.
            \end{equation*}
			\item[(\runum{2})] $\Phi$ is uniformly Ces\`aro eventually continuous with respect to $\mathfrak{F}$ on $\X$, and there exists $z\in\X$ such that $\Phi$ satisfies the lower bound condition \eqref{eq:LBC_Q} at $z$.
            \item[(\runum{3})] There exists $z\in\X$ such that $\Phi$ is uniformly Ces\`aro eventually continuous with respect to $\mathfrak{F}$ at $z$ and satisfies the lower bound condition \eqref{eq:LBC_Q} at $z$.
 		\end{itemize}
    \end{theorem}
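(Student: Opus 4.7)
The strategy is to recognize Theorem \ref{thm: V_mixing} as a direct consequence of Theorem \ref{thm:Q_f_uniform_mixing}, applied to the test--function family $\mathfrak{F}=\{f\in B(\X):|f(x)|\le 1+V(x)\}$. The bridge between the two formulations is the identity
\begin{equation*}
d_V(\mu_1,\mu_2)=\sup_{f\in\mathfrak{F}}|\langle f,\mu_1\rangle-\langle f,\mu_2\rangle|,\qquad\forall\,\mu_1,\mu_2\in\mathcal{P}_V(\X),
\end{equation*}
which converts convergence in $d_V$ into uniform convergence of $\langle f,\cdot\rangle$ over $\mathfrak{F}$.

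First, I would verify the structural hypotheses of Theorem \ref{thm:Q_f_uniform_mixing}. Since $V\ge 0$, any $f\in L_b(\X)$ with $\|f\|_\infty\le 1$ automatically satisfies $|f(x)|\le 1\le 1+V(x)$, so $\{f\in L_b(\X):\|f\|_\infty\le 1\}\subset\mathfrak{F}$. Hypothesis $(\mathbf{H_2})$ is assumed in the statement, with $1+V$ (or a constant multiple) serving as the dominating function. Next, I would derive the above identity. One clean way is to note, as in the paragraph preceding the theorem, that $d_V$ coincides with the $1$-Wasserstein distance for the discrete weighted metric $d(x,y)=\chi_{\{x\neq y\}}(2+V(x)+V(y))$, and to apply the Kantorovich--Rubinstein duality principle (\cite[Theorem 5.1]{Villani2009}), whose Lipschitz class with constant one in this metric coincides (up to an irrelevant additive constant) with $\mathfrak{F}$. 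An alternative, more direct route is to decompose $\mu_1-\mu_2$ into its Jordan parts: writing $\mu_1=\sigma+\alpha$, $\mu_2=\sigma+\beta$ with $\sigma=\mu_1\wedge\mu_2$ and $\alpha\perp\beta$, the optimal coupling places mass $\sigma$ on the diagonal (since off-diagonal cost is at least $2$), yielding $d_V(\mu_1,\mu_2)=\int(1+V)\,\mathrm{d}|\mu_1-\mu_2|$, which is exactly the supremum of $|\langle f,\mu_1-\mu_2\rangle|$ over $|f|\le 1+V$.

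With the identity in place, the three equivalences transfer mechanically. For (i) $\Leftrightarrow$ uniform mean ergodicity with respect to $\mathfrak{F}$: taking $\mu_1=Q_t^*\delta_x$ and $\mu_2=\mu$, we have
\begin{equation*}
d_V(Q_t^*\delta_x,\mu)=\sup_{f\in\mathfrak{F}}|Q_tf(x)-\langle f,\mu\rangle|,
\end{equation*}
so $d_V$-convergence and uniform mean ergodicity are the same statement; one also needs $\mathfrak{F}\subset L^1(\mu)$, but this is guaranteed by invariance of $\mu$, the Fatou bound, and Hypothesis $(\mathbf{H_2})$, which together give $\langle 1+V,\mu\rangle<\infty$. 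The conditions (ii) and (iii) are identical in both formulations since they refer only to the semigroup regularity and the lower bound condition \eqref{eq:LBC_Q}. Invoking Theorem \ref{thm:Q_f_uniform_mixing} then closes the proof.

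The main obstacle is the test--function identity for $d_V$; everything else is bookkeeping. Once that is established (either via Kantorovich--Rubinstein or directly through the Jordan decomposition argument above), the conclusion follows immediately.
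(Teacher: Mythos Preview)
Your proposal is correct and follows essentially the same approach as the paper, which presents Theorem~\ref{thm: V_mixing} as a direct application of Theorem~\ref{thm:Q_f_uniform_mixing} without supplying an explicit proof. You have correctly identified and filled in the one nontrivial ingredient, namely the duality identity $d_V(\mu_1,\mu_2)=\sup_{f\in\mathfrak{F}}|\langle f,\mu_1-\mu_2\rangle|$, which the paper only alludes to via the remark that $d_V$ is a $1$-Wasserstein distance for the weighted discrete metric.
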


As a concrete example, below we present an application on an iterated function system coupled with a periodic motion, originally constructed in \cite[Example 4.2]{GLLL2024}, which does not satisfy the Ces\`aro e-property. In particular, as a direct application of Theorem \ref{thm: V_mixing}, we establish the mean ergodicity in $V$-weighted total variation distance.

\begin{example}
    Let $\T=\R/2\pi$ be the $1$-D tours.
   The Feller process $\Xi=(\Phi,\Psi)$ on $\X=\R^+\times\T$ is given as follows. 
   For $\Phi$, let $\{\tau_n\}_{n\geq 1}$ be a sequence of random variables such that $\tau_0=0$ and $\triangle\tau_n=\tau_n-\tau_{n-1},n\geq 1$, are i.i.d. with density $\e^{-t}$. Define measurable mappings $w_i$ on $\R$ by
    \begin{equation*}
		w_1(x)=0,\quad w_2(x)=x,\quad w_3(x)=x^{-1}\chi_{\{x\neq 0\}},
    \end{equation*}
   and $p_i\in C(\X,[0,1])$ by
		\begin{equation*}
		(p_1(x),p_2(x),p_3(x))=\begin{cases}
		(\frac{x}{2},1-x,\frac{x}{2}),&0\leq x< \frac{2}{3},\\
		(\frac{1}{3},\frac{1}{3},\frac{1}{3}),&\frac{2}{3}\leq x\leq \frac{3}{2},\\
		(\frac{1}{2x},1-x^{-1},\frac{1}{2x}),&x>\frac{3}{2}.
		\end{cases}
		\end{equation*}
   With these settings, $\Phi=\{\Phi^x_t:x\in \X\}_{t\geq 0}$ is constructed in the following way. 

   \noindent\textbf{Step 1.} For any $x\in\X$, set $\hat{\Phi}_0^x:=x$.

   \noindent\textbf{Step 2.} Recursively, assume that $\hat{\Phi}_0^x,\dots,\hat{\Phi}_{n-1}^x,n\geq 1$ are given. We randomly choose $i_n\in \{1,2,3\}$ with probability $p_{i_n}(\hat{\Phi}_{n-1}^x),$ i.e,
	\begin{equation*}
	\P(i_n=k)=p_{k}(\hat{\Phi}_{n-1}^x)\quad\text{for } k\in \{1,2,3\}.
	\end{equation*}
	Then let $\hat{\Phi}_n^x=w_{i_n}(\hat{\Phi}_{n-1}^x).$

   \noindent\textbf{Step 3.} Now define 
	\begin{equation*}
	\Phi^x_t:=\hat{\Phi}_n^x\quad\text{for  }\tau_n\leq t<\tau_{n+1},\;n\geq 0.
	\end{equation*}
	  Next, we define $\Psi$ by
    $$
\Psi_t^y := y + t \pmod{2\pi}, \quad \text{for } y \in \mathbb{T}, \ t \geq 0.
$$
   \begin{proposition}
       Under the above settings, the Feller process $\Xi$ admits a unique invariant probability measure $\mu=\delta_0\times\leb(\T)$. Moreover, for any $V:\R_+\times\T\rightarrow[0,\infty)$ that is continuous,
            \begin{equation*}
               \lim_{t\rightarrow\infty} d_V(P_t^*\delta_x,\mu)=0, \quad \forall\, x\in\X.
            \end{equation*}
   \end{proposition}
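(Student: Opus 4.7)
The plan is to apply Theorem~\ref{thm: V_mixing} with test family $\mathfrak{F}=\{f\in B(\X):|f|\le 1+V\}$ and reference point $z=(0,0)\in\R_+\times\T$. The candidate invariant measure is $\mu=\delta_0\times m_\T$, where $m_\T:=(2\pi)^{-1}\leb(\T)$; invariance is immediate since $w_i(0)=0$ for every $i\in\{1,2,3\}$ (using the convention $w_3(0)=0$), so $\delta_0$ is fixed by the IFS component $\Phi$, while $m_\T$ is invariant for the uniform translation $\Psi$. It thus suffices to verify Hypothesis~$(\mathbf{H_2})$, the lower bound condition~\eqref{eq:LBC_Q} at $z$, and uniform Ces\`aro eventual continuity with respect to $\mathfrak{F}$ at $z$.

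I would first verify~\eqref{eq:LBC_Q} at $z=(0,0)$. Using $B(z,r)\supset B_{\R_+}(0,\delta)\times B_\T(0,\delta)$ for some $\delta>0$ together with the independence of $\Phi$ and $\Psi$, we have $Q_t(x,B(z,r))\ge(1/t)\int_0^t\P(\Phi_s^x=0)\,\chi_{B_\T(0,\delta)}(\Psi_s^y)\,\d s$. A direct analysis of the IFS (tracking the two-point orbit $\{x,1/x\}$ under $w_2,w_3$, on which the jump-to-zero probability $p_1$ stays bounded below by $\min(1/(2x),x/2)$) shows that $\Phi^x$ hits $0$ in finite time almost surely for every $x$, and once at $0$ stays there, so $\P(\Phi_s^x=0)\to 1$ as $s\to\infty$. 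Combined with the equidistribution of $\Psi_s^y$ on $\T$, Ces\`aro averaging yields $\lim_{t\to\infty}Q_t(x,B(z,r))=\delta/\pi>0$ for every $x$, hence~\eqref{eq:LBC_Q}.

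For uniform Ces\`aro eventual continuity at $z=(0,0)$, I would synchronously couple $\Xi^{(x',y')}$ with $\Xi^{(0,0)}$: share the Poisson clock $\{\tau_n\}$ and use a common uniform driver to select the indices at each jump, so that the two torus coordinates differ only by $|y'|\to 0$ and the perturbed IFS coordinate merges with $0$ the first time an index $1$ is drawn (after which both stay at $0$ forever). Splitting $Q_tf(x',y')-Q_tf(0,0)$ around the coupling time $\tau_c$ and bounding the pre-coupling difference uniformly in $f\in\mathfrak{F}$ by $\E[(2+V(\Xi^{x'}_t)+V(\Xi^0_t))\chi_{\{t<\tau_c\}}]$, Hypothesis~$(\mathbf{H_2})$ forces this to vanish as $t\to\infty$, while the torus-perturbation contribution is $O(|y'|/t)$ by a change of variables, independent of $f$.

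The main obstacle is verifying Hypothesis~$(\mathbf{H_2})$ for an arbitrary continuous $V$ and ensuring $\P(\tau_c>t)\to 0$ uniformly for $x'$ near $0$. The reciprocal map $w_3(x)=1/x$ can inflate small positive values to arbitrarily large ones, and $p_1(x)=x/2$ is small when $x$ is small, so naively $\E\tau_c\to\infty$ as $x'\to 0$. The key mitigating observations are $w_3\circ w_3=\mathrm{id}$ and the symmetry $p_1=p_3$ in every regime, which render the induced chain on $\{x',1/x'\}$ symmetric and recurrent to $0$. I would construct a Lyapunov function $W:\R_+\to\R_+$ of logarithmic type (e.g.\ $W(x)=\log(1+x)+\chi_{\{x>0\}}\log(1+1/x)$) satisfying the hypothesis of Proposition~\ref{lem:S_Lypv}; combined with Lemma~\ref{lem:boundedness_implies_UI}, this yields uniform integrability of $\{V(\Xi_t^{x'})\}_{t\ge 0}$ for continuous $V$ dominated by a function of $W$, as well as the needed uniform tail bound on $\tau_c$, completing the verification of~$(\mathbf{H_2})$.
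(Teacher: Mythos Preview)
Your overall plan (apply Theorem~\ref{thm: V_mixing} at $z=(0,0)$, verify \eqref{eq:LBC_Q}, Hypothesis~$(\mathbf{H_2})$, and uniform Ces\`aro eventual continuity) matches the paper, and your treatment of the lower bound condition is fine. The genuine gap is in your verification of Hypothesis~$(\mathbf{H_2})$.

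You propose a Lyapunov function $W(x)=\log(1+x)+\chi_{\{x>0\}}\log(1+1/x)$ and then say this gives uniform integrability of $\{V(\Xi_t^{x'})\}_{t\ge0}$ ``for continuous $V$ dominated by a function of $W$''. But the proposition is stated for \emph{every} continuous $V:\R_+\times\T\to[0,\infty)$, with no growth restriction, so a continuous $V$ growing like $e^{e^x}$ is allowed and no Lyapunov bound of logarithmic type can control it. The Lyapunov route through Proposition~\ref{lem:S_Lypv} and Lemma~\ref{lem:boundedness_implies_UI} is simply not strong enough here.

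The observation you already made --- that $\Phi^x$ lives on the finite orbit $\{0,x,1/x\}$ --- is exactly what closes this gap, and the paper uses it directly. For a fixed initial point $(x,y)$ one has $\Xi_t^{(x,y)}\in\{0,x,1/x\}\times\T$ for all $t\ge0$, so by continuity of $V$ and compactness of $\T$,
\[
\sup_{t\ge0}|f(\Xi_t^{(x,y)})|\le 1+\sup_{b\in\T}V(0,b)+\sup_{b\in\T}V(x,b)+\sup_{b\in\T}V(x^{-1},b)=:M_V(x)<\infty,
\]
uniformly in $f\in\mathfrak{F}$. This single deterministic bound gives $(\mathbf{H_2})$ trivially and also replaces your coupling estimate for the pre-merging part: the paper bounds $\sup_{f\in\mathfrak{F}}|f(\Xi_s^{(x,y)})-f(\Xi_s^{(0,0)})|\le 2M_V(x)$ and then applies Fatou's lemma to the Ces\`aro average, using only that $\Phi_s^x\to0$ a.s. The post-merging torus shift is handled by the same change-of-variables you describe.

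A related over-complication: you worry that $p_1(x')=x'/2\to0$ forces $\E\tau_c\to\infty$ and that you therefore need a tail bound on $\tau_c$ uniform in $x'$. You do not. In the definition of (uniform Ces\`aro) eventual continuity the limit $t\to\infty$ is taken for \emph{fixed} $x'$ before letting $x'\to z$; since the inner $\limsup_{t\to\infty}$ already vanishes for every $x'$ (by the bounded-orbit argument above), the outer $\limsup_{x'\to z}$ is automatically zero, with no uniformity in $x'$ required.
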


   \begin{proof}
       From the discussion in \cite{GLLL2024}, the process $\Xi$ does not satisfy the Ces\`aro e-property at $0$,  $\Phi_t^x$ converges almost surely to $0$ as $t\rightarrow\infty$ for any $x\in\X$ and the lower bound condition \eqref{eq:LBC_Q} with $z=(0,0)$ holds true.

         It therefore suffices to verify Hypothesis $(\mathbf{H_2})$ and Ces\`aro eventual continuity at $(0,0)$ with respect to the family 
       \begin{equation*}
           \mathfrak{F}=\{f\text{ measurable}:f(x,y)\le1+V(x,y)\}.
       \end{equation*}

        Given $(x,y)\in\R_+\times\T$, since $
        \Phi_t\in\{0,x,x^{-1}\}$ and $V$ is continuous,
       \begin{equation*}  |f(\Phi_t,\Psi_t)|\le\sup_{y\in\T}V(0,y)+\sup_{y\in\T}V(x,y)+\sup_{y\in\T}V(x^{-1},y):=M_V<\infty. 
       \end{equation*}
       This implies that $\{|f(\Phi_t,\Psi_t)|\}_{t\ge0}$ is uniformly integrable.

        To get the Ces\`aro eventual continuity at $(0,0)$, let us fix $(x,y)\in\X$.
        \begin{align*}
        &\limsup_{t\rightarrow\infty}\sup_{f\in\mathfrak{F}}|Q_tf(x,y)-Q_tf(0,0)|\\
            \leq& \limsup_{t\rightarrow\infty}\sup_{f\in\mathfrak{F}}\E \left|\frac{1}{t}\int_0^tf(\Phi_s^x,\Psi_s^y)-f(\Phi_s^0,\Psi_s^0)\d s\right|\\
            \le&\limsup_{t\rightarrow\infty}\sup_{f\in\mathfrak{F}}\E \left|\frac{1}{t}\int_0^tf(\Phi_s^x,\Psi_s^y)-f(\Phi_s^0,\Psi_s^y)\d s\right|+\limsup_{t\rightarrow\infty}\sup_{f\in\mathfrak{F}}\E \left|\frac{1}{t}\int_0^tf(\Phi_s^0,\Psi_s^y)-f(\Phi_s^0,\Psi_s^0)\d s\right|\\
            =&I+I\!I.
        \end{align*}
        For the first term,
        \begin{align*}
            I\le&\limsup_{t\rightarrow\infty}\sup_{f\in\mathfrak{F}}\E \left|\frac{1}{t}\int_0^tf(\Phi_s^x,\Psi_s^y)-f(\Phi_s^0,\Psi_s^y)\chi_{\{\Phi_s\neq0\}}\d s\right|\\
            \le&\limsup_{t\rightarrow\infty}\E \frac{1}{t}\int_0^t\sup_{f\in\mathfrak{F}}\left|f(\Phi_s^x,\Psi_s^y)-f(\Phi_s^0,\Psi_s^y)\chi_{\{\Phi_s\neq0\}}\right|\d s.
        \end{align*}
        Note that\begin{equation*}
            \frac{1}{t}\int_0^t\sup_{f\in\mathfrak{F}}\left|f(\Phi_s^x,\Psi_s^y)-f(\Phi_s^0,\Psi_s^y)\chi_{\{\Phi_s\neq0\}}\right|\le2M_V<\infty.
        \end{equation*}
        By Fatou's lemma,
        \begin{equation*}
            I\le\E \limsup_{t\rightarrow\infty}\frac{1}{t}\int_0^t\sup_{f\in\mathfrak{F}}\left|f(\Phi_s^x,\Psi_s^y)-f(\Phi_s^0,\Psi_s^y)\chi_{\{\Phi_s\neq0\}}\right|\d s=0.
        \end{equation*}
        Since $\Phi_t^0\equiv0$, for the second term,
        \begin{align*}
            I\!I=&\limsup_{t\rightarrow\infty}\left|\frac{1}{t}\int_0^tf(0,\Psi_s^y)\d s-\frac{1}{t}\int_0^tf(0,\Psi_s^0)\d s\right|\\
            =&\limsup_{t\rightarrow\infty}\left|\frac{1}{t}\int_y^{t+y}f(0,\Psi_{s-y}^y)\d s-\frac{1}{t}\int_0^tf(0,\Psi_s^0)\d s\right|\\
            \le &\limsup_{t\rightarrow\infty}\frac{1}{t}\left\{\int_t^{t+y}V(0,\Psi_{s}^0)\d s+\int_0^yV(0,\Psi_s^0)\d s\right\}\\
            \le& \limsup_{t\rightarrow\infty}\frac{2y}{t}\cdot \sup_{y\in\T}V(0,y)=0.
        \end{align*}
        This completes the proof.

   \end{proof}
   \end{example}

\subsection{Asymptotic stability and mean ergodicity of equicontinuous processes}
     Equicontinuity, also known as the e-property, is an important regularity property for Markov process (see e.g. \cite{SzarekWorm2012}). Following Definition \ref{def:F_EvC}, \ref{def:F_Q_EvC}, \ref{def:F_uEvC} and \ref{def:F_Q_uEvC}, we can define (Ces\`aro) equicontinuity with respect to a collection of function $\mathfrak{F}$ in a similar way.
     
    \begin{definition}
         Let $\mathfrak{F}$ be a sub-family of $\mathcal{A}$.
    \begin{enumerate}[label=(\roman*)]
        \item $\Phi$ is said to be \emph{equicontinuous w.r.t $\mathfrak{F}$} at $x\in \X$ if 
		\begin{equation}
			\limsup\limits_{x'\rightarrow x}\sup\limits_{t\ge0}|P_tf(x')-P_tf(x)|=0,\quad \forall\,f\in \mathfrak{F}.
		\end{equation}
        \item $\Phi$ is said to be  \emph{uniformly equicontinuous w.r.t $\mathfrak{F}$} at $x\in \X$ if 
		\begin{equation*}
			\limsup\limits_{x'\rightarrow x}\sup\limits_{t\ge0}\sup_{f
            \in\mathfrak{F}}|P_tf(x')-P_tf(x)|=0.
		\end{equation*} 
    \end{enumerate}
    If $P_t$ is replaced by $Q_t$, then we refer to these properties as  \emph{Cesàro equicontinuous w.r.t.\ $\mathfrak{F}$} and \emph{uniformly Cesàro equicontinuous w.r.t.\ $\mathfrak{F}$} at $x \in \X$. If the property holds for all $x \in \X$, then we replace ``at $x \in \X$" by ``on $\X$" and sometimes omit this phrase for convenience.
    \end{definition}

    Since (Ces\`aro) equicontinuity is strictly stronger than (Ces\`aro) eventual continuity, we can apply Theorems~\ref{thm:f_mixing}, \ref{thm:f_uniform_mixing}, \ref{thm:Q_f_mixing} and \ref{thm:Q_f_uniform_mixing}
    
    to yield analogous results. we state the outcomes of Theorems~\ref{thm:f_mixing} and \ref{thm:Q_f_mixing} serve as examples; analogous extensions for the other theorems are omitted for brevity.
    \begin{corollary}\label{corollary:f_equi_mixing}
		Let $\mathfrak{F}\subset B(\X)$ be such that $L_b(\X)\subset \mathfrak{F}$ and satisfy Hypothesis $\mathbf{(H_1)}$. Suppose that $\Phi$ is equicontinuous w.r.t $\mathfrak{F}$ on $\X$, then the following statements are equivalent:
		\begin{itemize}
			\item [(\runum{1})] $\Phi$ is asymptotically stable w.r.t $\mathfrak{F}$ with unique invariant measure $\mu$.
			\item[(\runum{2})]  There exists $z\in\X$ such that $\Phi$ satisfies the lower bound condition \eqref{eq:LBC_P} at $z$.
		\end{itemize}
	\end{corollary}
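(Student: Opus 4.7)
The plan is to deduce this corollary directly from Theorem \ref{thm:f_mixing}, exploiting the fact that equicontinuity with respect to $\mathfrak{F}$ is strictly stronger than eventual continuity with respect to $\mathfrak{F}$. Specifically, for any $f\in\mathfrak{F}$ and any $x\in\X$, one has the trivial bound
\begin{equation*}
    \limsup_{x'\to x}\limsup_{t\to\infty}|P_tf(x')-P_tf(x)|
    \;\le\;\limsup_{x'\to x}\sup_{t\ge0}|P_tf(x')-P_tf(x)|,
\end{equation*}
so that equicontinuity of $\Phi$ w.r.t.\ $\mathfrak{F}$ on $\X$ (the standing hypothesis) automatically yields eventual continuity w.r.t.\ $\mathfrak{F}$ on $\X$ in the sense of Definition \ref{def:F_EvC}.

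For the implication $(i)\Rightarrow(ii)$, I would simply invoke Theorem \ref{thm:f_mixing}: under the hypothesis $L_b(\X)\subset\mathfrak{F}$ and $\mathbf{(H_1)}$, asymptotic stability w.r.t.\ $\mathfrak{F}$ is one of the three equivalent statements there, and in particular it implies the existence of some $z\in\X$ at which the lower bound condition \eqref{eq:LBC_P} is satisfied.

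For the reverse implication $(ii)\Rightarrow(i)$, I would combine the eventual continuity on $\X$ (extracted from the assumed equicontinuity as noted above) with the lower bound condition \eqref{eq:LBC_P} at the point $z$ provided by $(ii)$. These two ingredients together are precisely condition $(ii)$ of Theorem \ref{thm:f_mixing}, which then delivers the asymptotic stability of $\Phi$ w.r.t.\ $\mathfrak{F}$, i.e.\ statement $(i)$ of the corollary.

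Since this is essentially a repackaging of Theorem \ref{thm:f_mixing} under a strengthened regularity hypothesis, there is no real technical obstacle; the only point worth making carefully is that the transition from $\sup_{t\ge0}$ to $\limsup_{t\to\infty}$ is valid uniformly in $f\in\mathfrak{F}$ without requiring anything beyond the definitions, so that Hypothesis $\mathbf{(H_1)}$ and the assumption $L_b(\X)\subset\mathfrak{F}$ (which are needed to invoke Theorem \ref{thm:f_mixing}) are inherited verbatim.
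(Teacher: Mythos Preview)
Your proposal is correct and follows essentially the same approach as the paper. The only cosmetic difference is that for $(\runum{1})\Rightarrow(\runum{2})$ the paper repeats the short direct argument (taking $z\in\supp\mu$ and using the Portmanteau theorem) rather than invoking Theorem~\ref{thm:f_mixing} as a black box, but this is the very same step already contained in the proof of Theorem~\ref{thm:f_mixing}, so your shortcut is fully justified.
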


        \begin{proof}
        Suppose (i) holds. Take $z\in \supp \mu$, then for all $x \in \X$, \begin{equation*}
        \lim\limits_{t\to\infty}P_t(x,B(z,r))\ge\mu(B(z,r))>0,
        \end{equation*}
        where the second inequality is justified by Portmanteau theorem.
        
        \noindent\textbf{(ii) $\Rightarrow$ (i):} Suppose (ii) holds true, we can apply Theorem \ref{thm:f_mixing} to get (i).
    \end{proof}

    \begin{corollary}
    		Let $\mathfrak{F}\subset B(\X)$ be such that $L_b(\X)\subset \mathfrak{F}$ and satisfy Hypothesis $\mathbf{(H_1)}$. Suppose that $\Phi$ is Ces\`{a}ro equicontinuous w.r.t $\mathfrak{F}$ on $\X$, then the following statements are equivalent: 
		\begin{itemize}
			\item [(\runum{1})] $\Phi$ is mean ergodic w.r.t $\mathfrak{F}$ with unique invariant measure $\mu$.
			\item[(\runum{2})]  There exists $z\in\X$ such that $\Phi$ satisfies the lower bound condition \eqref{eq:LBC_Q} at $z$.
		\end{itemize}
	\end{corollary}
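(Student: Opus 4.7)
The plan is to mirror the proof of Corollary~\ref{corollary:f_equi_mixing} in the Ces\`aro setting, since the two implications are essentially formal consequences of Theorem~\ref{thm:Q_f_mixing} together with the Portmanteau theorem.

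For the implication $(\runum{1})\Rightarrow(\runum{2})$, I would first exploit the inclusion $L_b(\X)\subset\mathfrak{F}$. Mean ergodicity with respect to $\mathfrak{F}$ then forces $Q_tf(x)\to\langle f,\mu\rangle$ for every bounded Lipschitz $f$, which is exactly weak convergence $Q_t^*\delta_x\to\mu$ on $(\mathcal{P}(\X),\text{weak})$. Pick any $z\in\supp\mu$. For arbitrary $r>0$ the ball $B(z,r)$ is open, so the Portmanteau theorem yields
\begin{equation*}
\liminf_{t\to\infty}Q_t(x,B(z,r))\ge\mu(B(z,r))>0\quad\text{for all }x\in\X,
\end{equation*}
and the infimum over $x$ of the $\limsup$ is at least $\mu(B(z,r))>0$, giving \eqref{eq:LBC_Q} at $z$.

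For the reverse implication $(\runum{2})\Rightarrow(\runum{1})$, the key observation is that Ces\`aro equicontinuity w.r.t.\ $\mathfrak{F}$ on $\X$ is \emph{strictly stronger} than Ces\`aro eventual continuity w.r.t.\ $\mathfrak{F}$ on $\X$; indeed, replacing $\sup_{t\ge 0}$ with $\limsup_{t\to\infty}$ in the definition can only make the quantity smaller. Hence the hypotheses of condition $(\runum{2})$ of Theorem~\ref{thm:Q_f_mixing} are satisfied, and invoking that theorem gives mean ergodicity w.r.t.\ $\mathfrak{F}$.

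There is essentially no technical obstacle here beyond correctly citing the pieces: the nontrivial work has already been absorbed into Theorem~\ref{thm:Q_f_mixing}. The only delicate point to state clearly in the write-up is why mean ergodicity w.r.t.\ $\mathfrak{F}$ delivers weak convergence of $Q_t^*\delta_x$ (used in the Portmanteau step) — this follows immediately from $L_b(\X)\subset\mathfrak{F}$ together with the standard fact that convergence against bounded Lipschitz functions characterizes weak convergence on a Polish space.
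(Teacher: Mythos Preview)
Your proposal is correct and follows essentially the same route as the paper: for $(\runum{1})\Rightarrow(\runum{2})$ you pick $z\in\supp\mu$ and use weak convergence of $Q_t^*\delta_x$ (via $L_b(\X)\subset\mathfrak{F}$) together with Portmanteau to obtain the lower bound, and for $(\runum{2})\Rightarrow(\runum{1})$ you reduce to Theorem~\ref{thm:Q_f_mixing} after noting that Ces\`aro equicontinuity implies Ces\`aro eventual continuity. Your write-up is in fact slightly more explicit than the paper's about the Portmanteau step.
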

    \begin{proof}
        Suppose (i) holds. As the proof in Corollary \ref{corollary:f_equi_mixing}, take $z\in \supp \mu$, then for all $x \in \X$, \begin{equation*}
        \lim\limits_{t\to\infty}Q_t(x,B(z,r))\ge\mu(B(z,r))>0.
        \end{equation*}
        
        \noindent\textbf{(ii) $\Rightarrow$ (i):} Suppose (ii) holds true, we can apply Theorem \ref{thm:Q_f_mixing} to get (i).
    \end{proof}
    \section{Further discussions}\label{Sec:Further discussions}
    In this section, we discuss the necessity of the conditions raised in the main result and discuss some hypotheses on the restriction.
    
    \subsection{One hypothesis on the restriction $L_b(\X)\subset\mathfrak{F} $}\label{Subsec:counterexample}

In Theorem~\ref{thm:f_mixing}, a key requirement on $\mathfrak{F}$ is that $L_b(\X) \subset \mathfrak{F}$, which ensures that the topology induced by the test function family $\mathfrak{F}$ is stronger than the weak topology. Indeed, once the inclusion $L_b(\X) \subset \mathfrak{F}$ is removed, it becomes difficult to guarantee the existence of an invariant measure (see Proposition~\ref{Prop:Exist*}).

One hypothesis on the restriction is that there exists one example, such that
\begin{equation*}
    \text{EvC w.r.t. } C_c(\X) + \text{LBC}\eqref{eq:LBC_P} \nRightarrow \text{ AS w.r.t. }C_c(\X).
\end{equation*}
Although we are not able to construct a precise counterexample that simultaneously satisfies the lower bound condition \eqref{eq:LBC_P} and the eventual continuity with respect to $C_c(\X)$ while failing to be asymptotically stable, there exists a related construction on this hypothesis. The following example, inspired by \cite{Szarek2006} with suitable modifications, demonstrates that if $\Phi$ is eventually continuous with respect to the family $\mathfrak{F} = C_c(\X)$, then a lower bound condition slightly weaker than \eqref{eq:LBC_P} is insufficient to guarantee the existence of an invariant measure. In other words, this indicates that the test function family $L_b(\X)$ is more suitable for studying asymptotic stability properties in Polish spaces.

\begin{example}
        
    Define a mapping $h: \mathbb{Z}_+ \times \mathbb{Z}_+ \times \bar{\mathbb{Z}}_+ \rightarrow l^\infty$ by
    \begin{equation*}
        h(i,j,k) \;=\; \big(i,\,\,\overbrace{0,\cdots,0}^{j\text{ terms}},\,\,2^{-k},\,\,0,\cdots\big),
    \end{equation*}
    where we adopt the convention that $2^{-\infty} = 0$. Let $\X$ denote the range of $h$, i.e., $\X
    =\{h(i,j,k):(i,j,k)\in\mathbb{Z}_+ \times \mathbb{Z}_+ \times \bar{\mathbb{Z}}_+\}$. Since $\mathbb{Z}_+ \times \mathbb{Z}_+ \times \bar{\mathbb{Z}}_+$ is countable,  $\X$ is countable as well.

    Consider a sequence $\{x_n=h(y_n,z_n,w_n)\}_{n \ge 1} \subset \X$. If $\lim\limits_{n\to\infty}x_n=x\in l^\infty$, then one of the following holds:
    \begin{itemize}
        \item[(1)] There exists $N \in \mathbb{N}$ such that $x_n = x$ for all $n \ge N$;
        \item[(2)] $\lim\limits_{n\to\infty}w_n=\infty$ and $x=(\xi,0,0,\cdots)=h(\xi,0,\infty)$ for some $\xi\in\Z$.
    \end{itemize}
    Consequently, $\X$ is a closed subspace of $l^\infty$ under the induced norm, and hence a Polish space.\vspace{2mm}

    We next define a Markov process $\Phi_n = h(\xi_n,\zeta_n,\eta_n)$ where $(\xi_n,\zeta_n,\eta_n)$ is a Markov process in $\mathbb{Z}_+ \times \mathbb{Z}_+ \times \bar{\mathbb{Z}}_+$. The transition probabilities of $(\xi_n,\zeta_n,\eta_n)$ is defined by:
    \begin{equation*}
        \begin{cases}
            P_{(i,j,k) \to (i,j+1,k+1)} \;=\; p_1(k), \\[6pt]
    
            P_{(i,j,k) \to (i+1,j+1,k)} \;=\;  p_2(i,k)  \\[6pt]
            P_{(i,j,k) \to (1,j+1,1)} \;=\; 1 - p_1(k)- p_2(i,k) .
        \end{cases}
    \end{equation*}

    \noindent\textbf{Step 1. $\Phi$ is a Feller process.}
    
    Let $\{x_n = m(y_n,z_n,w_n)\}_{n \ge 0} \subset \mathbb{X}$ be a sequence satisfying $\lim\limits_{n\to\infty}x_n=x$.
    Without loss of generality, assume $x = (\xi,0,\infty)$ for some $\xi\in\R$ and $y_n\equiv\xi$, $\lim\limits_{t\to\infty}w_n = \infty$. For the transition operator, we have:
    \begin{align*}
        P_1 f(x_n) &= p_1(w_n) \cdot f(m(\xi,z_n+1,w_n+1))+p_2(\xi,w_n) \cdot f(m(\xi+1,z_n+1,w_n))\\
        &\quad+ (1-p_1(w_n)-p_2(\xi,w_n)) \cdot f(m(1,z_n+1,1)) .
    \end{align*}
    By continuity of $f$, $\lim\limits_{n \to \infty} f(m(\xi_0,1,\eta_n+1)) = f(m(\xi_0,1,\infty))$. Thus:
    \[
        \lim\limits_{n \to \infty} P_t f(x_n) \;=\; f(m(\xi_0,1,\infty)) \;=\; P_t f(m(\xi_0,0,\infty)),
    \]
    confirming the Feller property.

    \vspace{1mm}
    
    \noindent\textbf{Step 2. Lower bounded condition.}
    
    From \cite[Section 4]{Szarek2006}, $\Phi$ satisfies the following lower bounded condition at $z=m(1,0,\infty)$: there exists $x\in\X$ for any $r\ge0$,
   \[
\limsup_{n\to\infty} \left( \frac{1}{n} \sum_{i=1}^n P^i(x, B(z,r)) \right) > 0 .
\]

    \noindent\textbf{Step 3. Eventual continuity at $z = m(1,0,\infty)$ with respect to $\mathfrak{F} = C_c(\X)$.}
    
    Compact subsets $K \subset \X$ take one of two forms:
    \begin{itemize}
        \item[(1)] $K$ is a finite set;
        \item[(2)] $K = \{x_n = m(\xi_n,\zeta_n,\eta_n)\}_{n \ge 0}$ with $x_0 = (\xi_0,0,\infty)$, and there exists $N \ge 1$ such that $\xi_n = \xi_0$ for all $n \ge N$ and $\lim\limits_{n \to \infty} \eta_n = \infty$.
    \end{itemize}
    
    For $f \in C_c(\X)$ with support of type (1): since $\zeta_n \to \infty$, we have $P_n f(x) \to 0$ for all $x \in \X$. Thus:
    \[
        \limsup_{n \to \infty} |P_n f(x) - P_n f(y)| \;\le\; \lim_{n \to \infty} |P_n f(x)| + \lim_{n \to \infty} |P_n f(y)| \;=\; 0.
    \]
    
    For $f \in C_c(\X)$ with support of type (2): there exists a sequence $\{y_m\}_{m \ge 1} \subset \X \setminus \supp(f)$ such that $\lim\limits_{m \to \infty} \|y_m - z\|_\infty = 0$. By continuity of $f$, this implies $f(z) = 0$. For any $\varepsilon > 0$, there exists $\delta > 0$ such that $|f(x)| < \varepsilon$ for all $x \in B(z,\delta)$. Note that $\supp(f) \setminus B(z,\delta)$ contains only finitely many elements. For sufficiently large $n$, $\P(\Phi_n \in \supp(f) \setminus B(z,\delta)) = 0$, so:
    \begin{equation*}
        \mathbb{E}f(\Phi_n) \;\le\; \varepsilon \cdot \P(\Phi_n \in B(z,\delta)) \;\le\; \varepsilon.
    \end{equation*}
    Since $\varepsilon$ is arbitrary, eventual continuity with respect to $C_c(\X)$ follows.

    \vspace{1mm}
    \noindent\textbf{Step 4. No invariant measure exists.}
    
    This follows from the fact that $\eta_n \to \infty$ almost surely, which would contradict Birkhoff's Ergodic Theorem if an invariant measure existed.
\end{example}

\subsection
{The necessity of the hypotheses on uniform integrability }\label{Subsec: H1_necessary}\label{Subsec:CE_U}

In this subsection, we present an example demonstrating that Hypothesis~$\mathbf{(H_1)}$ is, in some cases, necessary for $\mathfrak{F}$ in Theorem~\ref{thm:f_mixing}. Specifically, we construct a Feller process on a Polish space that satisfies the lower bound condition~\eqref{eq:LBC_P} and is eventually continuous with respect to a certain class of test functions $\mathfrak{F}\subset \mathcal{A}$, yet fails to satisfy Hypothesis~$\mathbf{(H_1)}$. We then verify that this process is not asymptotically stable with respect to $\mathfrak{F}$. The construction is described in detail below.

Consider a Markov process $\Phi$ on $\X=\{0\}\cup\{2^i:i\in\N\}$ with transition probabilities
\begin{equation*}
    p_{0,0}=1;\quad p_{2^i,0}=\frac{1}{2},\ p_{2^i,2^{i+1}}=\frac{1}{2},\quad\forall\, i\ge1.
\end{equation*}
Let $\rho$ be the metric on $\X$  induced from $\Z$, i.e. for any $x,y\in\X$, $\rho(x,y)=|x-y|$. Then $(\X, \rho)$ is a Polish space, and the Markov process $\Phi$ is Feller, since the space is discrete and has no accumulation points.

For all $n\ge1$, $p^{(n)}_{0,0}=1$ and when $i\ge1$,
\begin{align*}
&p^{(n)}_{2^i,2^{i+n}}=\P(\Phi_n=2^{i+n}|\Phi_0=2^i)=\frac{1}{2^n};\\
&p^{(n)}_{2^i,0}=\P(\Phi_n=0|\Phi_0=2^i)=1-\frac{1}{2^n}.
\end{align*}
Define $V(x) = x$ for all $x \in \X$ (i.e., the identity mapping on $\X$), then $P_nV$ is well defined for all $n\in\N$ and
\begin{equation}
\label{eq:ex2_boundedness}
    P_nV(2^i)=\int_\X V(y)p_n(2^i,\d y) =\E(\Phi_n|\Phi_0=2^i)=2^i<\infty.
\end{equation}
 Consider the test function family
\begin{equation*}
    \mathfrak{F}_\alpha=\{f\in C(\X):f(x)\le V^\alpha(x),\ \forall\, x\in\X\}, \quad  \alpha\in(0,1]. 
\end{equation*}
 
 The eventual continuity w.r.t $\mathfrak{F}_\alpha$ at $0$ is satisfied since $\X$ is discrete. $\Phi$ satisfies the lower bound condition \eqref{eq:LBC_P} at $z=0$ since \begin{equation*}
     \lim\limits_{n\to\infty}\P(\Phi_n=0|\Phi_0=x)=1,\quad\forall\, x\in\X.
 \end{equation*}
  By \eqref{eq:ex2_boundedness}, $\mathfrak{F}_{\alpha}$ satisfies Hypothesis $\mathbf{(H_1)}$ for $0<\alpha<1$. By Theorem \ref{thm:f_mixing}, we can conclude that $\Phi$ is asymptotically stable w.r.t. $\mathfrak{F}_\alpha$ with $0<\alpha<1$.
  In particular, the unique invariant measure for $\Phi$ is $\delta_0$.
  
Now consider $\alpha=1$. Given any $K\ge0$, for $n\ge \log_2K$, \begin{equation*}
    \chi_{\{|\Phi_n|\ge K\}}=\chi_{\{\Phi_n\neq 0\}}. 
\end{equation*}
Thus, when $x\neq0$ \begin{equation*}
    \limsup\limits_{n\to\infty}\E(|\Phi_n|\chi_{\{|\Phi_n|\ge K\}}|\Phi_0=x)=\limsup\limits_{n\to\infty}\E(\Phi_n|\Phi_0=x)=x>0.
\end{equation*}
This implies that $\{V(\Phi_n)\}_{n \ge 0}$ is not uniformly integrable; that is, Hypothesis $\mathbf{(H_1)}$ is not satisfied for $\mathfrak{F}_1$.

 Meanwhile, for the initial distribution $\nu = \delta_1 \neq \delta_0$,
\begin{equation*}
    \langle V, P^*_t \delta_1 \rangle = \E[V(\Phi_n) \mid \Phi_0 = 1) = 1 \neq \langle V, \delta_0 \rangle.
\end{equation*}
This shows that asymptotic stability with respect to $\mathfrak{F}_1$ fails.

Finally, we emphasize that asymptotic stability can not holds for initial distribution $\nu$ with $\mathfrak{F}\not\subset L^1(\nu)$. As an example, take $\alpha=1/2$ and $\nu =(p_k)_{k=1}^\infty$, where \begin{equation}
    p_k=\nu(2^k)=\left\{\begin{aligned}
        &0,   & \text{if }k\text{ is odd or }k=0;\\
        &c\cdot \frac{1}{m^2}, &\text{if }k=2m \text{ with }m\in\N_+;
    \end{aligned}\right.
\end{equation}
with $c=\left(\sum_{m=1}^\infty\frac{1}{m^2}\right)^{-1}$.
Then $\langle V^{1/2},\nu\rangle=\sum_{m=1}^\infty c\cdot\frac{1}{m^2}\cdot2^m=\infty.$

Define $P_n^*\nu=(p_k^{(n)})_{k=1}^\infty$ where $p_k^{(n)}=P_n^*\nu(2^k)=\P(\Phi_n=2^k|\mathcal{D}(\Phi_0)=\nu)$. Then \begin{equation}
    p_k^{(n)}=\left\{\begin{aligned}
        &0,   & \text{ if $k$ is odd, or if $k$ is even and } k \le n;\\
        &c\cdot \frac{1}{2^n}\cdot\frac{1}{m^2}, &\text{ if }k=2m+n \text{ with }m\in\N_+;
    \end{aligned}\right.
\end{equation}
Then\begin{equation*}
     \langle V^{1/2},P_n^*\nu\rangle=\sum_{m=1}^\infty c\cdot\frac{1}{2^n}\cdot\frac{1}{m^2}\cdot\sqrt{2^{2m+n}}=\frac{c}{2^n}\sum_{m=1}^\infty \frac{2^m}{m^2}=\infty.
\end{equation*}
Hence, $\langle V^{1/2},P_n^*\nu\rangle$ can not have finite limits.

\section{Proofs}\label{Sec:4}
\subsection{Sketch of the proofs}

This section is mainly devoted to the proofs of Theorems~\ref{thm:f_mixing}, \ref{thm:Q_f_mixing}, \ref{thm:f_uniform_mixing}, and \ref{thm:Q_f_uniform_mixing}. Before presenting the detailed arguments in the following subsections, we briefly outline the main ideas of the proofs. Among these results, the most technical part lies in establishing the implications $(\runum{3}) \Rightarrow (\runum{1})$ in each part. Since the proofs of these implications follow essentially the same strategy, we illustrate the core ideas by sketching the proof of this step in Theorem~\ref{thm:f_mixing}.

\vspace{2mm}

To begin, we recall a useful result originating from \cite{LasotaSzarek2006,Szarek2006,GongLiu2015,GLLL2024}, which guarantees the existence of an invariant measure under weak regularity properties together with the lower bound condition.

\begin{proposition}\label{Prop:Exist*}
		Suppose that there exists some $z\in \X$ such that $\Phi$ is Ces\`aro eventually continuous at $z$ and for any $\varepsilon>0$, 
		\begin{equation}\label{eq:LBC_0}\tag{$\mathcal{C}_3$}
			\limsup\limits_{t\rightarrow\infty}Q_t(z,B(z,\varepsilon))>0.
		\end{equation}	
		Then $\{Q_t(z,\cdot)\}_{t\ge0}$ is tight, which implies that there exists an invariant measure for  $\Phi$.
	\end{proposition}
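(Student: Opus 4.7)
The plan is to first establish the tightness of $\{Q_t(z,\cdot)\}_{t\ge 0}$, and then deduce the existence of an invariant measure from this tightness via the Krylov--Bogoliubov argument: any narrow subsequential limit $\mu$ of $Q_{t_n}(z,\cdot)$ along a sequence $t_n \to \infty$ is automatically $\{P_t\}$-invariant thanks to the Feller property of $\Phi$. The substantive content of the proposition therefore lies in the tightness claim, and I would concentrate the effort there.

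For the tightness, I would follow the scheme originating in \cite{LasotaSzarek2006} and refined in the eventual-continuity setting in \cite{GongLiu2015,GLLL2024}. The goal is to show, for every $\varepsilon > 0$, the existence of a compact set $K_\varepsilon \subset \X$ and a time $T_\varepsilon \ge 0$ with $Q_t(z, K_\varepsilon) \ge 1 - \varepsilon$ for all $t \ge T_\varepsilon$. The two hypotheses combine as follows: the Ces\`aro eventual continuity at $z$ yields, for each $f \in L_b(\X)$ and each $\eta > 0$, a radius $r = r(f,\eta) > 0$ such that
\begin{equation*}
\limsup_{t\to\infty} |Q_t f(x) - Q_t f(z)| < \eta, \quad \forall\, x \in B(z,r),
\end{equation*}
while the lower bound condition \eqref{eq:LBC_0} gives $\limsup_t Q_t(z, B(z,r)) > 0$ for every $r > 0$. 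Together, these express a form of \emph{self-similarity}: the long-run Ces\`aro distribution from $z$ places nontrivial mass on $B(z,r)$, and, for points in $B(z,r)$, the subsequent Ces\`aro distribution is arbitrarily close to that from $z$. The second step is to turn this self-similarity into uniform concentration on compacta by a recursive construction of $K_\varepsilon$ (enlarging a finite union of small balls around a carefully chosen net) and a careful bookkeeping of mass transfer between $B(z,r)$ and the remainder of $\X$.

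The anticipated main obstacle is the non-local-compactness of the Polish space $\X$: unlike in locally compact spaces, tightness cannot be read off from a vague-limit argument, and one must instead build $K_\varepsilon$ constructively. The essential difficulty is to rule out that a nontrivial fraction of mass escapes to infinity in the weak sense; the bootstrap using the self-similarity above is the key mechanism. Once tightness is established, any weak cluster point $\mu$ of $\{Q_{t_n}(z,\cdot)\}$ lies in $\mathcal{P}(\X)$, and the standard identity
\begin{equation*}
P_s^* Q_t(z, \cdot) - Q_t(z, \cdot) = \frac{1}{t} \int_t^{t+s} P_r^* \delta_z \, \d r - \frac{1}{t} \int_0^s P_r^* \delta_z \, \d r,
\end{equation*}
whose right-hand side has total variation norm at most $2s/t \to 0$ as $t \to \infty$ for fixed $s$, combined with the Feller property, shows $\mu = P_s^* \mu$ for every $s \ge 0$, completing the proof.
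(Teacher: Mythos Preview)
Your proposal is correct and aligns with the paper's treatment: the paper does not give a self-contained proof of this proposition but instead refers to \cite[Theorem 3.2]{GLLL2024}, remarking that its argument (the Lasota--Szarek tightness scheme combined with Krylov--Bogoliubov) goes through verbatim once one observes that only Ces\`aro eventual continuity at the single point $z$ is used. Your outline reproduces exactly that scheme, so there is nothing to add.
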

	\begin{remark}
		In \cite[Theorem 3.2]{GLLL2024}, it requires  $\Phi$ to be Ces\`aro eventually continuous on $\X$, which is stronger than being Ces\`aro eventually continuous at $z$. However, a closer examination of the proof reveals that the core arguments remain valid and the requirement can be weaken in the form stated in this proposition without any modification.
	\end{remark}
    
Since $L_b(\X)\subset\mathfrak{F}$, combining Proposition \ref{Prop:Exist*}, Gong, Liu, Liu and Liu
has shown the existence and uniqueness of invariant measure $\mu$ in \cite[Theorem 3.12]{GLLL2024}. Therefore, the key is to show:
\begin{equation}\label{eq:goal}
    \lim_{t\to\infty}|P_tf(x)-\langle f,\mu\rangle|=0, \quad \forall\, f\in\mathfrak{F}.
\end{equation}
The first issue to address is that $\langle f,\mu\rangle$ may be ill-defined when $f$ is unbounded.
 However, invoking the generalized Birkhoff's ergodic theorem (Lemma \ref{lem:generalized_Birk}) and Hypothesis $\mathbf{(H_1)}$, the integrability of $f$ can be assured (see Lemma \ref{lem:integrability_by_generalised_Bir} for more details).

Next, to establish \eqref{eq:goal}, it suffices to  construct a  coupling $(\Phi^{(1)},\Phi^{(2)})$ process such that 
\begin{equation*}
     \lim_{t\to\infty}|\E^xf(\Phi_t)-\E^\mu f(\Phi_t)|=0.
\end{equation*}
The key step then turns to the construction an independent coupling $(\Phi^{(1)},\Phi^{(2)})$ on a same probability space, such that the law of $\Phi^{(1)}$ and $\Phi^{(2)}$ coincides with $\Phi$ under $\P^x$ and $\P^\mu$, respectively. Let us rewrite \eqref{eq:goal} as:
\begin{equation}\label{eq:goal2}
\lim_{t\to\infty}\left|\E^{\delta_x\times \mu}f(\Phi_t^{(1)})- f(\Phi_t^{(2)})\right|=0
\end{equation}

In view of the eventual continuity of $\Phi$, this implies that for any $\varepsilon>0$, there exists $r>0$ such that if $x\in B(z,r)$, 
\begin{equation*}
    \limsup_{t\to\infty}\left|\E^{(x,z)}f(\Phi_t^{(1)})- f(\Phi_t^{(2)})\right|\le\varepsilon.
\end{equation*}
It therefore ensures that for any $\Theta\in\mathcal{P}(\X\times \X)$ on $\X\times\X$ satisfying $\supp\Theta\subset B(z,r)\times B(z,r)$, one has
\begin{equation*}
    \limsup_{t\to\infty}\left|\E^{\Theta}f(\Phi_t^{(1)})- f(\Phi_t^{(2)})\right|\le\varepsilon.
\end{equation*}
This motivates us to use such construction to verify \eqref{eq:goal2}. However, such relation holds only when the initial distribution is supported on $ B(z,r)\times B(z,r)$.

To overcome this difficulty, we employ the lower bound condition. More precisely, this condition ensures that, almost surely, the two components of the coupling process, $\Phi^{(1)}$ and $\Phi^{(2)}$, visit the ball $B(z,r)$ simultaneously for any initial distribution $\Theta_0$, in particular for $\Theta_0 = \delta_x \times \mu$. Roughly speaking, this fact, combined with the strong Markov property, leads to our main results.

The proof, however, involves certain technical details concerning the interchange of the limit and integral operations. When the integrand is bounded, this step follows easily from the bounded convergence theorem. However, when $f$ is unbounded, additional care is required. To address this, we employ the notion of uniform integrability to compensate for the lack of boundedness (see Lemma~\ref{lem:UI_exchange_limit_and_integral}), thereby resolving the issue.

The complete proofs are presented in the following subsections. For the reader’s convenience, we also recall in Appendix~\ref{appendix:UI} some basic properties of uniformly integrable random variables, which play a key role in the proof of the main theorems.

\subsection{Proof of Theorem \ref{thm:f_mixing}}\label{Subsection:proof_of_f_mixing}
\begin{proof}
	\noindent\emph{(\runum{1}) $\implies$ (\runum{2}):}
	By asymptotic stability w.r.t $\mathfrak{F}$, there exists a unique invariant measure $\mu\in\mathcal{P(\X)}$ such that $\mathfrak{F}\subset L^1(\mu)$ and for any $x \in \X$, $f\in\mathfrak{F}$, 
    \begin{equation*}
        \lim\limits_{t\to\infty}\vert P_tf(x) - \langle f, \mu \rangle \vert=0.
    \end{equation*}
    
    Given $x,y\in\X$, $f\in\mathfrak{F}$,
\begin{equation*}
		\lim\limits_{t\to\infty}\vert P_tf(x) - P_tf(y)\vert 
        \le \lim\limits_{t\to\infty}\vert P_tf(x) - \langle f, \mu \rangle \vert  + \lim\limits_{t\to\infty}\vert P_tf(y) - \langle f, \mu \rangle \vert =0 ,
	\end{equation*}
	which implies eventual continuity w.r.t $\mathfrak{F}$ on $\X$.
		
	For $z \in \supp \mu$, $r>0$, since $\mathfrak{F}\supset L_b(\X)$, there exists $f \in \mathfrak{F}$ such that $\chi_{B(z,r/2)} \le f \le \chi_{B(z,r)} $. Hence for all $x \in \X$, 
	\begin{equation*}
		\lim\limits_{t\to \infty} P_t (x, B(z,r)) \ge \lim\limits_{t\to \infty} P_tf(x) = \langle f, \mu \rangle \ge \mu(B(z,r/2)) > 0.
	\end{equation*}

	\noindent\emph{(\runum{2}) $\implies$ (\runum{3}):} This implication is straightforward.\vspace{2mm}	
			
	\noindent\emph{(\runum{3}) $\implies$ (\runum{1}):} Since \eqref{eq:LBC_P} implies \eqref{eq:LBC_0} and $L_b(\X)\subset\mathfrak{F}$, using Proposition \ref{Prop:Exist*}, there exists an ergodic invariant  measure  $\mu\in \mathcal{P}(\X)$.

    Firstly,  we show $\mathfrak{F}\subset L_1(\X,\mu)$, which is directly deduced from the following lemma. 
     \begin{lemma}\label{lem:integrability_by_generalised_Bir}
         Let $\mu\in \mathcal{P}(\X)$ be an ergodic invariant measure for $\Phi$ and $f\in B(\X)$. If there exists $\bar{\X}\in\mathcal{B}(\X)$ with $\mu(\bar{\X})>0$ and for $x\in \bar{\X}$
         \begin{equation*}
             \sup_{t\ge0}\E^x|f(\Phi_t)|<\infty,
         \end{equation*}then $f\in L^1(\mu)$.
     \end{lemma}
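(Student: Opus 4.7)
The plan is to argue by contradiction: assume that $f \notin L^1(\mu)$, i.e. $\langle |f|, \mu\rangle = \infty$, and derive a contradiction with the assumed boundedness $\sup_{t\ge 0}\E^x|f(\Phi_t)| < \infty$ on $\bar\X$. The essential tool is the generalized Birkhoff theorem (Lemma~\ref{lem:generalized_Birk}), which provides convergence of Ces\`aro averages toward $\langle g,\mu\rangle \in [0,+\infty]$ for nonnegative measurable $g$, $\P^\mu$-a.s. (and hence, by disintegration of $\P^\mu = \int \P^x\,\mu(\d x)$, $\P^x$-a.s.\ for $\mu$-a.e.\ $x\in\X$).

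First, I would apply the generalized Birkhoff theorem to the truncations $g_N := |f| \wedge N$, $N\in\N$, each of which is bounded and hence in $L^1(\mu)$. For each $N$, there exists a $\mu$-full-measure set $\X_N \subset \X$ such that for every $x \in \X_N$,
\begin{equation*}
    \lim_{t\to\infty}\frac{1}{t}\int_0^t g_N(\Phi_s)\,\d s = \langle g_N, \mu\rangle, \quad \P^x\text{-a.s.}
\end{equation*}
Let $\X_\infty := \bigcap_{N\ge 1}\X_N$; then $\mu(\X_\infty) = 1$, and by monotonicity in $N$ one has, for every $x \in \X_\infty$,
\begin{equation*}
    \liminf_{t\to\infty}\frac{1}{t}\int_0^t |f(\Phi_s)|\,\d s \;\ge\; \sup_{N}\langle g_N, \mu\rangle \;=\; \langle |f|, \mu\rangle \;=\; +\infty \quad \P^x\text{-a.s.}
\end{equation*}

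Next, I would pick a point $x_0 \in \bar\X \cap \X_\infty$, which is possible because $\mu(\bar\X) > 0$ and $\mu(\X_\infty) = 1$ force $\mu(\bar\X \cap \X_\infty) > 0$, hence this intersection is nonempty. Applying Fatou's lemma under $\P^{x_0}$ gives
\begin{equation*}
    +\infty \;=\; \E^{x_0}\!\left[\liminf_{t\to\infty}\frac{1}{t}\int_0^t |f(\Phi_s)|\,\d s\right] \;\le\; \liminf_{t\to\infty}\frac{1}{t}\int_0^t \E^{x_0}|f(\Phi_s)|\,\d s \;\le\; \sup_{s\ge 0}\E^{x_0}|f(\Phi_s)|,
\end{equation*}
where the first inequality uses Fatou (justified since $|f|\ge 0$) and Fubini–Tonelli, and the last follows by bounding the integrand pointwise by the supremum. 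Since $x_0 \in \bar\X$, the right-hand side is finite by hypothesis, giving the required contradiction. Hence $\langle |f|, \mu\rangle < \infty$.

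The main obstacle is not conceptual but requires care at the setup: one has to ensure that the generalized Birkhoff theorem is applicable for $\mu$-a.e.\ $x$ (not just under $\P^\mu$), and that the null sets $\X\setminus\X_N$ can be unioned over countably many $N$ to still leave a $\mu$-full-measure set whose intersection with $\bar\X$ is nonempty. Both are standard once one writes $\P^\mu = \int \P^x\,\mu(\d x)$ and exploits the continuous-time measurability of the Ces\`aro functional, which is guaranteed by the right-continuity of $\Phi$. No uniform integrability (Hypothesis $\mathbf{(H_1)}$) is needed for this lemma itself; only the pointwise supremum bound on $\bar\X$ is used.
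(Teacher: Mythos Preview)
Your proof is correct and follows essentially the same approach as the paper: contradiction via Birkhoff-type divergence of Ces\`aro averages of $|f|$, disintegration of $\P^\mu$ to obtain divergence $\P^x$-a.s.\ for $\mu$-a.e.\ $x$, intersection with $\bar\X$, and Fatou's lemma. The only cosmetic difference is that the paper packages your truncation step $g_N = |f|\wedge N$ into a separately stated ``generalized Birkhoff theorem'' (Lemma~\ref{lem:generalized_Birk}), whereas you unfold that argument inline.
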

    \begin{proof}
        We argue by contradiction. Otherwise, assume that $\langle |f|,\mu\rangle=\infty$. By Lemma~\ref{lem:generalized_Birk}, setting \begin{equation*}
            A=\left\{\omega:\lim\limits_{t\to\infty}\frac{1}{t}\int_0^t|f(\Phi_s(\omega))|\d s=\infty\right\},
        \end{equation*} then
        \begin{equation*}
     \P^\mu(A)=\int_\X\P^x(A)\mu(\d x)=1,
        \end{equation*}
 which implies that
 \begin{equation*}
     \P^x(A)=1\quad\mu\text{-a.s. }x\in\X. 
 \end{equation*} 
        To conclude, there exists $\X'\subset\X$ such that for all $x\in\X'$, $\P^x$-a.s. $\omega$,
    \begin{equation*}
    \lim\limits_{t\to\infty}\frac{1}{t}\int_0^t|f(\Phi_s(\omega))|\d s=\infty.
    \end{equation*}
    By Fatou's lemma,
    \begin{equation*}
        \E^x \lim\limits_{t\to\infty}\frac{1}{t}\int_0^t|f(\Phi_s(\omega))|\d s \le \liminf\limits_{t\to\infty} \E^x\frac{1}{t}\int_0^t|f(\Phi_s(\omega))|\d s=\liminf\limits_{t\to\infty} Q_t|f|(x),
    \end{equation*}
    which contradicts to the fact that $\sup_{t\ge0}Q_t|f|(x) \le \sup_{t\ge0}P_t|f|(x)<\infty$ for all $x\in\X$.
    \end{proof}

    To complete the proof of Theorem \ref{thm:f_mixing}, we shall apply Lemma \ref{lemma:uniformly_loss_of_memory} below. To this end, recall that by our assumption, Hypothesis $(\mathbf{H_1})$ holds for a family $\mathfrak{F}\subset B(\X)$. Then for any $f\in\mathfrak{F}$, let us consider the subfamily $\tilde{\mathfrak{F}}:=\{f\}\subset\mathfrak{F}$. Clearly, under such definition, Hypothesis $(\mathbf{H_2})$ is satisfied for $\tilde{\mathfrak{F}}$. Thus, using  Lemma \ref{lemma:uniformly_loss_of_memory} we conclude that,
        \begin{equation*}
			\limsup\limits_{t \to \infty}\sup_{f\in\tilde{\mathfrak{F}}} |P_tf(x) - \langle f,\mu\rangle | = \limsup\limits_{t \to \infty} |P_tf(x) - \langle f,\mu\rangle | = 0,\quad \forall\, x \in \X,
 	\end{equation*}
    which completes the proof.
    \end{proof}

\subsection{Proof of Theorem \ref{thm:Q_f_mixing}}\label{Subsection:proof_of_Q_f_mixing}
\begin{proof}
    \noindent\emph{(\runum{1})$\implies$(\runum{2}):}
	By mean ergodicity w.r.t $\mathfrak{F}$, there exists a unique invariant measure $\mu\in\mathcal{P(\X)}$ such that $\mathfrak{F}\subset L^1(\mu)$ and for any $x \in \X$, $f\in\mathfrak{F}$, 
    \begin{equation*}
        \lim\limits_{t\to\infty}\vert Q_tf(x) - \langle f, \mu \rangle \vert=0.
    \end{equation*}
    Given $x,y\in\X$,
	\begin{equation*}
		\lim\limits_{t\to\infty}\vert Q_tf(x) - Q_tf(y)\vert 
        \le \lim\limits_{t\to\infty}\vert Q_tf(x) - \langle f, \mu \rangle \vert  + \lim\limits_{t\to\infty}\vert Q_tf(y) - \langle f, \mu \rangle \vert =0 ,
	\end{equation*}
	which implies Ces\`aro eventually continuity w.r.t $\mathfrak{F}$ on $\X$.\vspace{1mm}
		
	Take $z \in \supp \mu$. Given any $r>0$, since $L_b(\X)\subset\mathfrak{F}$, there exists $f \in \mathfrak{F}$ such that $\chi_{B(z,r/2)} \le f \le \chi_{B(z,r)} $. Hence for all $x \in \X$, 
	\begin{equation*}
		\lim\limits_{t\to \infty} Q_t (x, B(z,r))  \ge \lim\limits_{t\to \infty} Q_tf(x) = \langle f, \mu \rangle \ge \mu(B(z,r/2)) > 0.
	\end{equation*} 
    This verifies the lower bound condition \eqref{eq:LBC_Q} at $z$.
    \vspace{1mm}
    
	\noindent\emph{(\runum{2})$\implies$(\runum{3}):} This implication is straightforward.\vspace{2mm}
    
	\noindent\emph{(\runum{3})$\implies$(\runum{1}):} The proof is divided into two steps.
    
    \vspace{2mm}
    \noindent 	$\mathbf{Step\;1}.$ We first show there exists $z\in\X$ such that for any $\varepsilon>0$,
            \begin{equation}\label{eq:LBC_3}
                \inf_{x\in\X}\liminf\limits_{t\rightarrow\infty}Q_t(x,B(z,\varepsilon))>0,\tag{$\mathcal{C}_3$}
            \end{equation}
    This step is nearly the same of that in \cite{GLLL2024} but only requiring $\{f\in L_b(\X):\|f\|_\infty\le1\}\subset\mathfrak{F}$.
   
    \vspace{2mm}
    
    Since $\mathfrak{F}\supset L_b(\X)$, for any $\varepsilon>0$, $\exists\, f\in\mathfrak{F}$ such that $\chi_{\{B(z,\varepsilon/2)\}}\leq f\leq \chi_{\{B(z,\varepsilon)\}}.$ 
    According to Proposition \ref{Prop:Exist*}, there exists an ergodic invariant probability measure $\mu$ and $\{Q_t(z,\cdot)\}_{t\geq 0}$ weakly converges to $\mu$ as $t\rightarrow\infty$, hence
		\begin{equation}\label{eq 3.9}
		\liminf\limits_{t\rightarrow\infty}Q_t(z,B(z,\varepsilon/2))\geq \mu(B(z,\varepsilon/2)) =:\alpha>0.
		\end{equation}
            Since $\{\Phi_t\}_{t\geq 0}$ is Ces\`aro eventually continuous w.r.t $\mathfrak{F}$ at $z,$ there exists $\delta>0$ such that $\forall\,z'\in B(z,\delta),$
		\begin{equation}\label{eq 3.10}
		\limsup\limits_{t\rightarrow\infty}|Q_tf(z')-Q_tf(z)|<\alpha/4.
		\end{equation}
        
		Combine (\ref{eq 3.9}) and (\ref{eq 3.10}), $\forall\,z'\in B(z,\delta)$,
		\begin{equation}\label{eq 3.11}
		\begin{aligned}
		\liminf\limits_{t\rightarrow\infty}Q_t(z',B(z,\varepsilon))\geq\frac{1}{2} \liminf\limits_{t\rightarrow\infty}Q_t(z,B(z,\varepsilon/2))-\limsup\limits_{t\rightarrow\infty}|Q_tf(z')-Q_tf(z)|\geq \alpha/4.
		\end{aligned}
		\end{equation}
		Let
        \begin{equation*}
		\beta := \inf\limits_{x\in \X}\limsup\limits_{t\rightarrow\infty}Q_t(x,B(z,r))>0.
		\end{equation*}
		For all $x\in \X$, there exists $T$ such that $P_T(x,B(z,\delta))\geq\beta/2$. 
        Define
		\begin{equation*}
		\nu(\cdot):=\P^x(\Phi_T\in\cdot| \Phi_T\in B(z,r)).
		\end{equation*}
		then $\nu(B(z,r))=1$ and 
		Fatou's lemma implies that
		\begin{equation*}
		\liminf\limits_{t\rightarrow\infty}Q_t\nu(B(z,\varepsilon))\geq\int_{X}	\liminf\limits_{t\rightarrow\infty}Q_t(y,B(x,\varepsilon))\nu(\d y)\geq\alpha/4.
		\end{equation*}\par 
		Therefore, we have
		\begin{equation*}
		\liminf\limits_{t\rightarrow\infty}Q_t(x,B(x,\varepsilon))=\liminf\limits_{t\rightarrow\infty}Q_tP_T(x,B(x,r))\geq\alpha\beta/8>0\quad\forall\,x\in \X.
		\end{equation*}
        \noindent 	$\mathbf{Step\;2}.$ The second step is similar to the proof of Theorem \ref{thm:f_mixing}, where we shall apply Lemma \ref{lemma:uniformly_loss_of_memory} below. In view of  Hypothesis $(\mathbf{H_1})$ for the family $\mathfrak{F}\subset B(\X)$, for any $f\in\mathfrak{F}$, let us consider the subfamily $\tilde{\mathfrak{F}}=\{f\}\subset\mathfrak{F}$. Then Hypothesis $(\mathbf{H_2})$ is satisfied for such $\tilde{\mathfrak{F}}$. Moreover, $\Phi$ is uniformly Ces\`aro eventually continuous w.r.t. $\tilde{\mathfrak{F}}=\{f\}$ at $z$. By Lemma \ref{lemma:uniformly_cesaro_loss_of_memory} we have, $\forall\, x \in \X$,
        \begin{equation*}
			\limsup\limits_{t \to \infty} |Q_tf(x) - \langle f,\mu\rangle | = 0,
 	\end{equation*}
    which completes the proof.

\end{proof}

\subsection{Proof of Theorem 
\ref{thm:f_uniform_mixing}}\label{Subsection:proof_of_f_uniform_mixing}

Before turning to the proof of Theorem 
\ref{thm:f_uniform_mixing}, we first present the following technical lemma, which constitutes the key component of the argument. Its proof relies essentially on coupling techniques.

\begin{lemma}\label{lemma:uniformly_loss_of_memory}
	Let $\mathfrak{F}\subset B(\X)$ satisfy Hypothesis $(\mathbf{H_2})$ and $\mu$ be the unique invariant measure of $\Phi$.
    If there exists $z \in \X$ such that 
    \begin{itemize}
        \item [(\runum{1})] $\Phi$ is uniformly eventually continuous w.r.t $\mathfrak{F}$ at $z$;
        \item [(\runum{2})] $\Phi$ satisfies the lower bound condition \eqref{eq:LBC_P} at $z$.
    \end{itemize}
	Then for all $x \in \X$,
    \begin{equation*}
		\limsup\limits_{t \to \infty} \sup_{f\in\mathfrak{F}}|P_tf(x) - \langle f,\mu\rangle | = 0.
	\end{equation*}
\end{lemma}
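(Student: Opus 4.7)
The plan is to employ an independent coupling argument combined with a stopping-time analysis. On a common probability space I construct two independent copies $\Phi^{(1)}, \Phi^{(2)}$ of $\Phi$ with $\Phi^{(1)}_0 = x$ and $\mathcal{D}(\Phi^{(2)}_0) = \mu$. Invariance of $\mu$ gives $\mathcal{D}(\Phi^{(2)}_t) = \mu$ for all $t \ge 0$, so that
\[
P_t f(x) - \langle f, \mu \rangle = \E\bigl[f(\Phi^{(1)}_t) - f(\Phi^{(2)}_t)\bigr], \qquad \forall\, f \in \mathfrak{F},
\]
and it suffices to show $\limsup_{t \to \infty}\sup_{f \in \mathfrak{F}}\bigl|\E[f(\Phi^{(1)}_t) - f(\Phi^{(2)}_t)]\bigr| = 0$.

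Fix $\varepsilon > 0$. Using uniform eventual continuity of $\Phi$ at $z$ together with a triangle inequality, I choose $r > 0$ such that $\limsup_{s \to \infty} \sup_{f \in \mathfrak{F}} |P_s f(y_1) - P_s f(y_2)| \le \varepsilon$ for every $y_1, y_2 \in B(z, r)$. Next I show that the pair $\Psi_t := (\Phi^{(1)}_t, \Phi^{(2)}_t)$ visits $U := B(z, r)^2$ infinitely often almost surely. By independence and the inequality $\liminf_t (a_t b_t) \ge (\liminf_t a_t)(\liminf_t b_t)$ for nonnegative sequences, \eqref{eq:LBC_P} yields
\[
\inf_{(y_1, y_2) \in \X \times \X} \liminf_{t \to \infty} \P^{(y_1, y_2)}(\Psi_t \in U) \ge \alpha^2 > 0,
\]
with $\alpha := \inf_y \liminf_t P_t(y, B(z, r))$. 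Combined with the Markov property of $\Psi$, a standard backward-martingale/tail-event argument (of the type used in Step~1 of the proof of Theorem~\ref{thm:Q_f_mixing}) then gives $\P(\Psi_t \in U \text{ i.o.}) = 1$.

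Consequently, for each $N \ge 0$ the stopping time $\sigma_N := \inf\{t \ge N : \Psi_t \in U\}$ is $\P$-a.s.\ finite. Applying the strong Markov property for $\Psi$ at $\sigma_N$ and splitting over $\{\sigma_N \le t\}$ and $\{\sigma_N > t\}$ gives, after taking $\sup_{f \in \mathfrak{F}}$,
\begin{align*}
\sup_{f \in \mathfrak{F}} |P_t f(x) - \langle f, \mu \rangle| &\le \E\!\left[\chi_{\{\sigma_N \le t\}} \sup_{f \in \mathfrak{F}} \bigl|P_{t-\sigma_N} f(\Phi^{(1)}_{\sigma_N}) - P_{t-\sigma_N} f(\Phi^{(2)}_{\sigma_N})\bigr|\right] \\
&\quad + \E\!\left[\bigl(V(\Phi^{(1)}_t) + V(\Phi^{(2)}_t)\bigr) \chi_{\{\sigma_N > t\}}\right].
\end{align*}
The second term vanishes as $t \to \infty$: since $\sigma_N < \infty$ a.s., $\chi_{\{\sigma_N > t\}} \to 0$, and Hypothesis $(\mathbf{H_2})$ guarantees that the uniformly integrable family $\{V(\Phi^{(i)}_t)\}_{t \ge 0}$ satisfies $\E[V(\Phi^{(i)}_t)\chi_{\{\sigma_N > t\}}] \to 0$.

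The principal difficulty lies in controlling the first term, since the integrand is \emph{not} uniformly bounded across $\mathfrak{F}$. On $\{\sigma_N \le t\}$ one has $\Phi^{(1)}_{\sigma_N}, \Phi^{(2)}_{\sigma_N} \in B(z, r)$ and $t - \sigma_N \to \infty$, so by the choice of $r$ the integrand admits the almost-sure bound $\limsup_{t \to \infty}(\cdot) \le \varepsilon$. Pointwise it is dominated by $\chi_{\{\sigma_N \le t\}}\,\E\bigl[V(\Phi^{(1)}_t) + V(\Phi^{(2)}_t) \,\big|\, \mathcal{F}^{\Psi}_{\sigma_N}\bigr]$, which remains uniformly integrable in $t$ because uniform integrability is preserved under conditional expectations. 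A reverse-Fatou-type lemma for uniformly integrable sequences (namely: if $Y_t \ge 0$ is uniformly integrable and $\limsup_t Y_t \le \varepsilon$ a.s., then $\limsup_t \E[Y_t] \le \varepsilon$, which follows from the decomposition $Y_t \le \varepsilon + (Y_t - \varepsilon)^+$ together with Vitali's convergence theorem) upgrades the almost-sure bound to an $L^1$ bound of $\varepsilon$. Combining the two estimates yields $\limsup_{t \to \infty} \sup_{f \in \mathfrak{F}} |P_t f(x) - \langle f, \mu \rangle| \le \varepsilon$, and the arbitrariness of $\varepsilon$ concludes the proof.
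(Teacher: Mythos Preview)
Your proposal is correct and follows essentially the same route as the paper: independent coupling $(\Phi^{(1)},\Phi^{(2)})$ with initial law $\delta_x\times\mu$, a hitting time $\tau$ of $B(z,r)\times B(z,r)$ shown to be a.s.\ finite via the lower bound condition \eqref{eq:LBC_P}, the decomposition over $\{\tau\le t\}$ and $\{\tau>t\}$, and then the same uniform-integrability machinery (preservation under conditional expectation plus a reverse-Fatou lemma for UI families, which is exactly the paper's Lemma~\ref{lem:UI_mathematic_expactation} and Lemma~\ref{lem:UI_exchange_limit_and_integral}). The only cosmetic differences are that the paper uses the single stopping time $\tau$ rather than your family $\sigma_N$ (the parameter $N$ is not actually needed, since $t-\tau\to\infty$ already on $\{\tau<\infty\}$), and the paper works with the closed ball $\overline{B(z,r)}$ to guarantee $\vec\Phi_\tau\in B$ for c\`adl\`ag paths---a detail you should adjust, but which does not affect the argument.
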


    \begin{proof}
	By $\pi$-$\lambda$ theorem, a transition probability kernel $R_t$ on the Polish state space $(\X \times \X, \mathcal{B}(\X \times \X))$ is uniquely defined by 
	\begin{equation*}
		R_t((x,y),A \times B) = P_t(x,A) \cdot P_t(y,B), \quad \forall\, A,B \in \mathcal{B}(\X), \quad \forall\, (x,y) \in \X \times \X .
	\end{equation*} 
	Let $\left(\Omega, \mathcal{F},\{\vec{\Phi}_t=(\Phi_t^{(1)},\Phi_t^{(2)})\}_{t\ge 0}, \{\mathcal{F}_t\}_{t\ge 0}, \{\P^{(x,y)}\}_{(x,y)\in {\X\times \X}}\right)$ denote the Markov family with transition kernels $\{R_t\}_{t\ge0}$. Define the probability measure 
	\begin{equation*}
		\P^{\Theta}(\cdot) = \int_{\X\times \X} \P^{(x,y)}(\cdot)\Theta(\d x,\d y)\text{, }\forall\, \Theta \in \mathcal{M}(\X\times \X).
	\end{equation*}
	The corresponding expectation under $\P^\Theta$ is denoted by $\mathbb{E}^{\Theta}$.	
	Let $F(x,y)=f(x)-f(y)$ and $\Theta_0=\delta_x\times\mu$. Since    
    \begin{align*}
		|P_tf(x) - \langle P_tf,\mu\rangle | \
        &= \left| \mathbb{E}^{\Theta_0} F(\vec{\Phi}_t)\right|.
	\end{align*}
    It suffices to show that for all $x \in \X$ and $\varepsilon>0$,
    \begin{equation*}
        \limsup\limits_{t \to \infty} \sup_{f\in\mathfrak{F}} \mathbb{E}^{\Theta_0} |F(\vec{\Phi}_t) | \le 3\varepsilon.
    \end{equation*}
    \noindent $\mathbf{Step\;1. \ Hitting \ Time\ Analysis.}$ 
    
    By uniform eventual continuity w.r.t $\mathfrak{F}$, there exists $r = r(\varepsilon)>0$ such that for all $x,y \in \overline{B(z,r)}$,
	\begin{equation}
    \begin{aligned}
        \limsup\limits_{t \to \infty} \sup_{f\in\mathfrak{F}}| \mathbb{E}^{(x,y)} F(\vec{\Phi}_t) |
        =&\limsup_{t \to \infty} \sup_{f\in\mathfrak{F}}|P_tf(x) - P_tf(y)| \\
        \le&
		\limsup_{t \to \infty} \sup_{f\in\mathfrak{F}}|P_tf(x) - P_tf(z) |+\limsup\limits_{t \to \infty} \sup_{f\in\mathfrak{F}}|P_tf(y) - P_tf(z) |
        < \varepsilon.
    \end{aligned}
		\label{eq:2}
	\end{equation}
	Let $B := \overline{B(z,r)}\times \overline{B(z,r)} \in \mathcal{B}(\X\times \X)$, and $\tau:= \inf\{t\ge 0: \vec \Phi_t\in B\}$. The goal of this step is to show
	\begin{equation}\label{eq:hitting_time_is_as_finite}
		\P^{\Theta_0} (\tau < \infty ) = 1. 
	\end{equation}
    
	By lower bound condition \eqref{eq:LBC_P}, for any initial point $(x,y)\in\X\times\X$, there exists $T^* =T^*(x,y)$ such that for all $t \ge T^*$,  
	\begin{equation*}
		\P^{(x,y)}(\vec{\Phi}_{t}\in B) = P_t(x,B(z,r)) \cdot P_t(y,B(z,r)) \ge \gamma_z(r),
	\end{equation*}
    where
    \begin{equation*}
        \gamma_z(r) = \left(\frac{\inf_{x\in\X} \liminf\limits_{t \to \infty} P_t(x,B(z,r))}2\right)^2>0.
    \end{equation*}
	Therefore,
	\begin{equation*}
		\liminf\limits_{t \to \infty}\P^{(x,y)}(\vec{\Phi}_{t}\in B) \ge \gamma_z(r).
	\end{equation*}
	By Fatou's lemma, for all $\Theta \in \mathcal{P}(\X\times \X)$,
	\begin{equation*}
		\begin{aligned}
			\liminf\limits_{t \to \infty}\P^{\Theta} (\vec{\Phi}_{t}\in B) &= 	\liminf\limits_{t \to \infty} \int_{\X\times\X} \P^{(x,y)}(\vec{\Phi}_t\in B) \Theta(\d x,\d y)\\
			&\ge \int_{\X\times\X} \liminf\limits_{t \to \infty} \P^{(x,y)}(\vec{\Phi}_t\in B) \Theta(\d x,\d y) \ge \gamma_z(r) .
		\end{aligned}
	\end{equation*}
    Hence, $\forall\,\Theta \in \mathcal{P}(\X\times \X)$, there exists $T=T(\Theta)$ such that
		\begin{equation*}
			\P^{\Theta} (\vec{\Phi}_{t}\in B)  \ge\frac{ \gamma_z(r)}{2},
		\end{equation*}
    
    Specifically, for $\Theta_0 = \delta_{x}\times\mu$, take $t_0 = T(\Theta_0)$ then
	\begin{equation*}
		\P^{\Theta_0} (\tau > t_0 )\le\P^{\Theta_0}\left( \vec{\Phi}_{t_0} \notin B\right) \le 1-\frac{\gamma_z(r)}{2}.
	\end{equation*}
	Recursively define
    \begin{equation*}
        \Theta_{i+1}(\cdot) = \P^{\Theta_i}\left(\vec{\Phi}_{t_{i}} \in \cdot | t_i \le \tau\right), \quad t_{i+1} = T(\Theta_i),\quad i=1,2,3\cdots
    \end{equation*}
    Then we have
    \begin{equation*}
		\P^{\Theta_i} (\tau > t_i )  \le  \P^{\Theta_i}\left( \vec{\Phi}_{t_i} \notin B\right) \le 1-\frac{\gamma_z(r)}{2},\quad i\in\Z_+.
	\end{equation*}
	By Markov property,
	\begin{equation*}
        \begin{aligned}
            \P^{\Theta_0} (\tau = \infty) &\le \P^{\Theta_0} (\tau >t_0 + t_1 + \cdots + t_n ) \\
            &\le \P^{\Theta_1} (\tau > t_1 + \cdots + t_n) \cdot \left(1-\frac{\gamma_z(r)}{2}\right)
            \le \dots \le \left(1-\frac{\gamma_z(r)}{2}\right)^n.
        \end{aligned}
	\end{equation*}
	As $ n \to \infty$, we get \eqref{eq:hitting_time_is_as_finite}.

    \vspace{2mm}
    \noindent $\mathbf{Step\;2.\ Decomposition\ and\ Estimatition.}$ 

    We calculate that
    \begin{align*}
    \limsup\limits_{t\to\infty}\sup_{f\in\mathfrak{F}}\left|\mathbb{E}^{\Theta_0}\left[F(\vec{\Phi}_{t})\right]\right|
        \le&\limsup\limits_{t\to\infty}\sup_{f\in\mathfrak{F}}\left|\mathbb{E}^{\Theta_0}\left[F(\vec{\Phi}_{t})\chi_{\{\tau\le t\}}\right]\right|+\limsup\limits_{t\to\infty}\sup_{f\in\mathfrak{F}}\left|\mathbb{E}^{\Theta_0}\left[F(\vec{\Phi}_{t})\chi_{\{\tau>t\}}\right]\right|\\
        := & I +I\!I.
    \end{align*}
    By the strong Markov property,
   \begin{align*}
       I=&\limsup_{t\to\infty} \sup_{f\in\mathfrak{F}}\left|\E^{\Theta_0}\left[F(\vec{\Phi}_{t})\chi_{\{\tau \le t\}}\right]\right|
       \\
       =&\limsup\limits_{t \to \infty} \sup_{f\in\mathfrak{F}}\left|\E^{\Theta_0}\left[\E^{\Theta_0}\left(F(\vec{\Phi}_{t})|\mathcal{F_\tau}\right)\chi_{\{\tau \le t\}}\right]\right|\\
       \le&\limsup\limits_{t \to \infty} \sup_{f\in\mathfrak{F}}\E^{\Theta_0}\left|\E^{\Theta_0}\left[F(\vec{\Phi}_{t})|\mathcal{F_\tau}\right]\chi_{\{\tau \le t\}}\right|\\
       \le&\limsup\limits_{t \to \infty} \E^{\Theta_0}\sup_{f\in\mathfrak{F}}\left|\E^{\Theta_0}\left[F(\vec{\Phi}_{t})|\mathcal{F_\tau}\right]\chi_{\{\tau \le t\}}\right|.
   \end{align*}
   Note that
   \begin{align*}
   \sup_{f\in\mathfrak{F}}\left|\E^{\Theta_0}\left[F(\vec{\Phi}_{t})|\mathcal{F_\tau}\right]\chi_{\{\tau \le t\}}\right|\le&\sup_{f\in\mathfrak{F}}\E^{\Theta_0}\left[\left|F(\vec{\Phi}_{t})\right||\mathcal{F_\tau}\right]\\
   \le& \sup_{f\in\mathfrak{F}}\E^{\Theta_0}\left[\left|f({\Phi}^{(1)}_{t})\right||\mathcal{F_\tau}\right]+\sup_{f\in\mathfrak{F}}\E^{\Theta_0}\left[\left|f({\Phi}^{(2)}_{t})\right||\mathcal{F_\tau}\right]\\
   \le&\E^{\Theta_0}\left[V({\Phi}^{(1)}_{t})|\mathcal{F_\tau}\right]+\E^{\Theta_0}\left[V({\Phi}^{(2)}_{t})|\mathcal{F_\tau}\right]
   \end{align*}
   By Hypothesis $(\mathbf{H_2})$, $\{V({\Phi}^{}_{t})\}_{t\ge0}$ is uniformly integrable under $\P^{x}$ and hence $\left\{V({\Phi}^{(1)}_{t})\right\}_{t\ge0}$ is uniformly integrable under $\P^{\Theta_0}$. Since $\mu$ is invariant measure for $\Phi$, $\{V(\Phi_t)\}_{t\ge0}$ has the same distribution under $\P^\mu$ and $\E^\mu V(\Phi_t)=\langle V,\mu\rangle<\infty$. Therefore, $\left\{V({\Phi}^{(2)}_{t})\right\}_{t\ge0}$ is uniformly integrable under $\P^{\Theta_0}$. By Lemma \ref{lem:UI_mathematic_expactation}, both $\left\{\E^{\Theta_0}\left[V({\Phi}^{(1)}_{t})|\mathcal{F_\tau}\right]\right\}_{t\ge0}$ and $\left\{\E^{\Theta_0}\left[V({\Phi}^{(2)}_{t})|\mathcal{F_\tau}\right]\right\}_{t\ge0}$ are uniformly integrable.
   Applying Lemma \ref{lem:UI_exchange_limit_and_integral} to $I$,
   \begin{align*}
       I\le &\E^{\Theta_0}\limsup\limits_{t \to \infty} \sup_{f\in\mathfrak{F}}\left|\E^{\Theta_0}\left[F(\vec{\Phi}_{t})|\mathcal{F_\tau}\right]\chi_{\{\tau \le t\}}\right|\\
       =&\E^{\Theta_0}\limsup\limits_{t \to \infty} \sup_{f\in\mathfrak{F}}\left|\E^{\vec{\Phi}_{\tau}}\left[F(\vec{\Phi}_{t-\tau})\right]\chi_{\{\tau \le t\}}\right|.
   \end{align*}
    Since  $\mathbb{E}^{\Theta_0}(\vec \Phi_\tau \in {B}) =1$, \eqref{eq:2} and \eqref{eq:hitting_time_is_as_finite},
    \begin{equation*}
        \limsup\limits_{t \to \infty} \sup_{f\in\mathfrak{F}}\left| \mathbb{E}^{\vec{\Phi}_{\tau(\omega)}(\omega)}F(\vec{\Phi}_{t-\tau(\omega)}(\omega)) \chi_{\{\tau(\omega)\le t\}} \right|\le \varepsilon \quad \P^{\Theta}\text{-a.s. } \omega.
    \end{equation*}
    This implies that
    \begin{equation*}
        I\le\varepsilon.
    \end{equation*}
    For the second term, 
    \begin{align*}
       \limsup\limits_{t \to \infty} \sup_{f\in\mathfrak{F}}\left|\E^{\Theta_0}\left[F(\vec{\Phi}_{t})\chi_{\{\tau > t\}}\right]\right|
       \le&\limsup\limits_{t \to \infty} \E^{\Theta_0}\sup_{f\in\mathfrak{F}}\left|F(\vec{\Phi}_{t})\chi_{\{\tau > t\}}\right|\\
       \le&\limsup\limits_{t \to \infty} \E^{\Theta_0}\sup_{f\in\mathfrak{F}}\left|f(\Phi^{(1)}_{t})\chi_{\{\tau > t\}}\right|+\limsup\limits_{t \to \infty} \E^{\Theta_0}\sup_{f\in\mathfrak{F}}\left|f(\Phi^{(2)}_{t})\chi_{\{\tau > t\}}\right|\\
       \le&\limsup\limits_{t \to \infty} \E^{\Theta_0}\left[V(\Phi^{(1)}_{t})\chi_{\{\tau > t\}}\right]+\limsup\limits_{t \to \infty} \E^{\Theta_0}\left[V(\Phi^{(2)}_{t})\chi_{\{\tau > t\}}\right]
   \end{align*}
    For given $\varepsilon>0$, since $\left\{V(\Phi_t^{(1)})\right\}_{t\ge0}$ is uniformly integrable, there exist $\Delta_0\ge0$ such that for any $A\in\mathcal{F}$ satisfying $\P^{\Theta_0}(A)\le\Delta_0$, \begin{equation*}
\E^{\Theta_0}\left[V(\Phi_t^{(1)})\chi_{A}\right]\le\varepsilon.
    \end{equation*}
    Consequently, applying  \eqref{eq:hitting_time_is_as_finite}, there exists $T_0$ such that for $t\ge T_0$,\begin{equation*}
        \P^{\Theta_0}(\tau>t)\le \Delta_0
    \end{equation*} 
    To conclude,
    \begin{equation*}
    \limsup\limits_{t\to\infty}\E^{\Theta_0}\left[V(\Phi_t^{(1)})\chi_{\{\tau>t\}}\right]\le\varepsilon.
    \end{equation*}
    Similarly, since $\left\{V(\Phi_t^{(2)})\right\}_{t\ge0}$ is uniformly integrable under $\P^{\Theta_0}$,
    \begin{equation*}
    \limsup\limits_{t\to\infty}\E^{\Theta_0}\left[V(\Phi_t^{(2)})\chi_{\{\tau>t\}}\right]\le\varepsilon.
    \end{equation*}
    Combining both terms we get $I\!I\le2\varepsilon$.
    As $\varepsilon$ is arbitrary, this completes the proof of Lemma \ref{lemma:uniformly_loss_of_memory}.
	\end{proof}
     
    \paragraph{Proof of Theorem \ref{thm:f_uniform_mixing}:}
	\begin{proof}
	\noindent\emph{(\runum{1}) $\implies$ (\runum{2}):}
	Since $\mathbf{\Phi}$ is uniformly asymptotically stable w.r.t $\mathfrak{F}$, there exists the unique invariant probability measure $\mu\in\mathcal{P}(\X)$ with $\mathfrak{F}\subset L^1(\mu)$. Furthermore, for any $x \in \X$,  
    \begin{equation*}
       \lim\limits_{t\to\infty}\sup_{f\in\mathfrak{F}}\vert P_tf(x) - \langle f, \mu \rangle \vert=0.
    \end{equation*}
    Given $x,y\in\X$,
	\begin{equation*}
		\lim\limits_{t\to\infty}\sup_{f\in\mathfrak{F}}\vert P_tf(x) - P_tf(y)\vert 
        \le \lim\limits_{t\to\infty}\sup_{f\in\mathfrak{F}}\vert P_tf(x) - \langle f, \mu \rangle \vert  + \lim\limits_{t\to\infty}\sup_{f\in\mathfrak{F}}\vert P_tf(y) - \langle f, \mu \rangle \vert =0 ,
	\end{equation*}
	which implies Ces\`aro eventually continuity w.r.t $\mathfrak{F}$ on $\X$.
		
	Finally,  for $z \in \supp \mu$, $r>0$, since $\{f\in L_b(\X):\|f\|_{\infty}\le1\}\subset\mathfrak{F}$ , there exists $f \in \mathfrak{F}$ such that $\chi_{B(z,r/2)} \le f \le \chi_{B(z,r)} $. Hence for all $x \in \X$, 
	\begin{equation*}
		\lim\limits_{t\to \infty} P_t (x, B(z,r))  \ge \lim\limits_{t\to \infty} P_tf(x) = \langle f, \mu \rangle \ge \mu(B(z,r/2)) > 0.
	\end{equation*} 
    \vspace{1mm}
    
	\noindent\emph{(\runum{2}) $\implies$ (\runum{3}):}  This implication is straightforward.\vspace{2mm}	
			
	\noindent\emph{(\runum{3}) $\implies$ (\runum{1}):} For the same reason as Theorem \ref{thm:f_mixing} there exists invariant ergodic probability measure $\mu\in \mathcal{P}(\X)$ and $\mathfrak{F}\subset L_1(\X,\mu)$. 

    Now, $\forall\, x \in \X$,
	By lemma \ref{lemma:uniformly_loss_of_memory}, we get the uniformly asymptotic stability w.r.t $\mathfrak{F}$.
	\end{proof}

\subsection{Proof of Theorem \ref{thm:Q_f_uniform_mixing}}\label{Subsection:proof_of_Q_f_uniform_mixing}

\paragraph{Proof of Theorem \ref{thm:Q_f_uniform_mixing}:}
\begin{proof}
	\noindent\emph{(\runum{1})$\implies$(\runum{2}):}
	Since $\mathbf{\Phi}$ is uniformly mean ergodic w.r.t $\mathfrak{F}$, there exists the unique invariant probability measure $\mu\in\mathcal{P}(\X)$ with $\mathfrak{F}\subset L^1(\mu)$. Furthermore, for any $x \in \X$,  
    \begin{equation*}
   \lim\limits_{t\to\infty}\sup_{f\in\mathfrak{F}}\vert Q_tf(x) - \langle f, \mu \rangle \vert=0.
    \end{equation*}
    
    Given $x,y\in\X$,
	\begin{equation*}
		\lim\limits_{t\to\infty}\sup_{f\in\mathfrak{F}}\vert Q_tf(x) - Q_tf(y)\vert 
        \le \lim\limits_{t\to\infty}\sup_{f\in\mathfrak{F}}\vert Q_tf(x) - \langle f, \mu \rangle \vert  + \lim\limits_{t\to\infty}\sup_{f\in\mathfrak{F}}\vert Q_tf(y) - \langle f, \mu \rangle \vert =0 ,
	\end{equation*}
	which implies Ces\`aro eventually continuity w.r.t $\mathfrak{F}$ on $\X$.\vspace{1mm}
		
	Finally, let $z \in \supp \mu$, $r>0$, since $\{f\in L_b(\X):\|f\|_{\infty}\le1\}\subset\mathfrak{F}$, there exists $f \in \mathfrak{F}$ such that $\chi_{B(z,r/2)} \le f \le \chi_{B(z,r)} $. Hence for all $x \in \X$, 
	\begin{equation*}
		\lim\limits_{t\to \infty} Q_t (x, B(z,r)) \ge \lim\limits_{t\to \infty} Q_tf(x) = \langle f, \mu \rangle \ge \mu(B(z,r/2)) > 0.
	\end{equation*} 
    This verifies the lower bound condition \eqref{eq:LBC_Q} at $z$.
    \vspace{1mm}
    
	\noindent\emph{(\runum{2})$\implies$(\runum{3}):}  This implication is straightforward.\vspace{3mm}	

	\noindent\emph{(\runum{3})$\implies$(\runum{1}):}
        With the same reasoning as in the proof of Theorem~\ref{thm:Q_f_mixing}, the lower bound condition~\eqref{eq:LBC_Q} is equivalent to~\eqref{eq:LBC_3} under the assumption of Cesàro eventual continuity with respect to $\mathfrak{F}$. Moreover, there exists a unique ergodic invariant probability measure $\mu \in \mathcal{P}(\X)$. To complete the proof of Theorem \ref{thm:f_uniform_mixing}, we invoke the following technical lemma.
    
    \begin{lemma}
    \label{lemma:uniformly_cesaro_loss_of_memory}
    Let $\mathfrak{F}\subset B(\X)$ satisfy Hypothesis $(\mathbf{H_2})$ and $\mu$ be the unique invariant measure of $\Phi$.  
    If there exists $z \in \X$ such that 
    \begin{itemize}
        \item [(\runum{1})] $\Phi$ is uniformly Ces\`aro eventually continuous w.r.t $\mathfrak{F}$ at $z$;
        \item [(\runum{2})] $\Phi$ satisfies the lower bound condition $(\mathcal{C}_3)$ at $z$.
    \end{itemize}
		Then for all $x\in \X$,
        \begin{equation*}
			\limsup\limits_{t \to \infty} \sup_{f\in\mathfrak{F}}|Q_tf(x) - Q_tf(z) | = 0.
		\end{equation*}
        and
        \begin{equation*}
			\limsup\limits_{t \to \infty} \sup_{f\in\mathfrak{F}}|\langle f,\mu\rangle - Q_tf(z) | = 0.
		\end{equation*}
    \end{lemma}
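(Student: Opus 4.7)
The plan is to adapt the coupling argument of Lemma \ref{lemma:uniformly_loss_of_memory} to the Ces\`aro setting. Both conclusions of the lemma follow from the stronger statement
\[
\limsup_{t\to\infty}\,\sup_{f\in\mathfrak{F}}\bigl|Q_tf(x) - \langle f,\mu\rangle\bigr| = 0 \qquad \text{for every } x \in \X,
\]
whose specialisation to $x = z$ yields the second conclusion and whose combination with the triangle inequality $|Q_tf(x) - Q_tf(z)| \le |Q_tf(x) - \langle f,\mu\rangle| + |Q_tf(z) - \langle f,\mu\rangle|$ yields the first. To prove the display, set up the independent coupling $\vec{\Phi}_t = (\Phi^{(1)}_t, \Phi^{(2)}_t)$ on $\X \times \X$ with kernel $R_t((x,y), A \times B) = P_t(x, A)\,P_t(y, B)$, write $F(u,v) := f(u) - f(v)$, take $\Theta_0 = \delta_x \times \mu$, and observe that, by the invariance of $\mu$, $\Phi^{(2)}_s \sim \mu$ for every $s \ge 0$, so that
\[
Q_tf(x) - \langle f,\mu\rangle \;=\; \mathbb{E}^{\Theta_0}\Bigl[\tfrac{1}{t}\!\int_0^t F(\vec{\Phi}_s)\,\d s\Bigr].
\]

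Fix $\varepsilon > 0$ and, by the uniform Ces\`aro eventual continuity at $z$, choose $r > 0$ so that $\limsup_{s\to\infty}\sup_{f\in\mathfrak{F}}|Q_sf(u) - Q_sf(v)| \le 2\varepsilon$ for all $(u,v) \in B := \overline{B(z,r)} \times \overline{B(z,r)}$. Let $\tau := \inf\{t \ge 0 : \vec{\Phi}_t \in B\}$, a stopping time, and decompose
\[
\mathbb{E}^{\Theta_0}\Bigl[\tfrac{1}{t}\!\int_0^t F(\vec{\Phi}_s)\,\d s\Bigr] = \mathbb{E}^{\Theta_0}\Bigl[\bigl(\tfrac{1}{t}\!\int_0^\tau + \tfrac{1}{t}\!\int_\tau^t\bigr) F(\vec{\Phi}_s)\,\d s\,;\, \tau \le t\Bigr] + \mathbb{E}^{\Theta_0}\Bigl[\tfrac{1}{t}\!\int_0^t F(\vec{\Phi}_s)\,\d s\,;\, \tau > t\Bigr].
\]
By the strong Markov property at $\tau$, the conditional expectation of $\tfrac{1}{t}\!\int_\tau^t F(\vec{\Phi}_s)\,\d s$ given $\mathcal{F}_\tau$ equals $\tfrac{t-\tau}{t}\bigl(Q_{t-\tau}f(\Phi^{(1)}_\tau) - Q_{t-\tau}f(\Phi^{(2)}_\tau)\bigr)$ on $\{\tau \le t\}$, whose $\limsup_t$ is at most $2\varepsilon$ uniformly in $f$, almost surely on $\{\tau < \infty\}$, by the choice of $r$. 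The pre-$\tau$ piece and the $\{\tau > t\}$ piece are controlled by Hypothesis $(\mathbf{H_2})$ through the domination $|F| \le V(\Phi^{(1)}) + V(\Phi^{(2)})$, together with the uniform integrability of $\{V(\Phi_s)\}_{s \ge 0}$, Fubini's theorem, the decay $\P^{\Theta_0}(\tau > s) \to 0$, and Lemmas \ref{lem:UI_mathematic_expactation} and \ref{lem:UI_exchange_limit_and_integral} for exchanging $\limsup_t$ with $\mathbb{E}^{\Theta_0}$; both pieces vanish in the limit. Sending $\varepsilon \downarrow 0$ finishes the argument.

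The main obstacle is proving $\P^{\Theta_0}(\tau < \infty) = 1$ from the merely averaged assumption $(\mathcal{C}_3)$, which provides only $\inf_x \liminf_t Q_t(x, B(z,r)) \ge \beta > 0$ rather than a pointwise lower bound on $P_s(x, B(z,r))$ of the kind that was used in Lemma \ref{lemma:uniformly_loss_of_memory} to control the product $P_s(x, B(z,r))\,P_s(y, B(z,r))$ governing the independent coupling. The stationarity of $\Phi^{(2)}$ under $\Theta_0 = \delta_x \times \mu$ is decisive: independence and invariance give
\[
\mathbb{E}^{\Theta_0}\Bigl[\tfrac{1}{t}\!\int_0^t \chi_B(\vec{\Phi}_s)\,\d s\Bigr] \;=\; \mu(B(z,r))\,Q_t(x, B(z,r)) \;\ge\; \mu(B(z,r))\,\beta/2
\]
for $t$ large, and both factors are positive since $z \in \operatorname{supp}\mu$ (which follows by the Portmanteau theorem from $Q_t(z, \cdot) \to \mu$ weakly, granted by Proposition \ref{Prop:Exist*}, and the uniform positivity of $\liminf_t Q_t(z, B(z,r))$). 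My strategy to convert this Ces\`aro positivity into a.s.\ finiteness of $\tau$ is: first establish $\sigma_1 := \inf\{t : \Phi^{(1)}_t \in B(z,r)\} < \infty$ a.s.\ by iterating the strong-Markov estimate $\P^x(\sigma_1 = \infty) \le (1-\beta)\,\P^x(\sigma_1 > T)$ and letting $T \to \infty$, justified by $\P^y(\sigma_1 < \infty) \ge \liminf_s Q_s(y, B(z,r)) \ge \beta$ uniformly in $y$; then introduce the successive return times $T_n$ of $\Phi^{(1)}$ to $B(z,r)$, each finite a.s.\ by the same argument applied after the Markov restart, and exploit $\P^{\Theta_0}(\Phi^{(2)}_{T_n} \in B(z,r)) = \mu(B(z,r))$ (from the independence of $\Phi^{(2)}$ from $\{T_n\}$ plus stationarity) to run a Markov iteration at the $T_n$ that produces a geometric tail for $\{\tau > T_n\}$, whence $\P^{\Theta_0}(\tau < \infty) = 1$.
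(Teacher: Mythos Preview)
Your coupling reduction to the product process $\vec{\Phi}=(\Phi^{(1)},\Phi^{(2)})$ with $\Theta_0=\delta_x\times\mu$ does not survive the passage from condition~\eqref{eq:LBC_P} to the purely averaged condition~$(\mathcal{C}_3)$: under $(\mathcal{C}_3)$ alone the coupled hitting time $\tau=\inf\{t:\vec{\Phi}_t\in B\}$ can have $\P^{\Theta_0}(\tau=\infty)>0$, and the paper's own iterated-function-system example in Section~3.2 already witnesses this. There $\Xi=(\Phi,\Psi)$ with $\Psi_t^y=y+t\bmod 2\pi$, invariant measure $\mu=\delta_0\times\mathrm{Leb}$, and $z=(0,0)$. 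The independent copy $\Xi^{(2)}$ started from $\mu$ has deterministic angular component $\Psi^{(2)}_t=Y_0+t$ with $Y_0\sim\mathrm{Leb}$, while $\Psi^{(1)}_t=y_0+t$; the phase gap $y_0-Y_0$ is constant in $t$, so whenever $|y_0-Y_0|>2r$ the two copies can never lie in $B((0,0),r)$ simultaneously, giving $\P^{\Theta_0}(\tau=\infty)\ge 1-2r/\pi>0$. Your iteration at the return times $T_n$ of $\Phi^{(1)}$ cannot repair this: although $\P^{\Theta_0}(\Phi^{(2)}_{T_n}\in B(z,r))=\mu(B(z,r))$ unconditionally, the Markov restart at $T_n$ uses the \emph{conditional} law of $\Phi^{(2)}_{T_n}$ given $\{\tau>T_n\}$, which is no longer $\mu$, and in the example above the events $\{\Phi^{(2)}_{T_n}\in B(z,r)\}$ are perfectly correlated (they all equal $\{|y_0-Y_0|<r\}$), so no geometric tail emerges.

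The paper avoids the product construction entirely in the Ces\`aro case. It works with a \emph{single} copy of $\Phi$ and proves, for $\nu=\delta_x$ and $\nu=\mu$ separately, that $\limsup_{t}\sup_{f}\bigl|\E^{\nu}[\tfrac1t\int_0^t f(\Phi_s)\,\d s]-Q_tf(z)\bigr|=0$, using the single-process hitting time $\tau=\inf\{t:\Phi_t\in\overline{B(z,r)}\}$. For one process, $(\mathcal{C}_3)$ does give $\P^{\nu}(\tau<\infty)=1$ by the renewal-type argument you sketched for $\sigma_1$, and after $\tau$ the strong Markov property plus the uniform Ces\`aro eventual continuity at $z$ compares the post-$\tau$ Ces\`aro average to $Q_{t-\tau}f(z)$; the uniform integrability from $(\mathbf{H_2})$ and Lemmas~\ref{lem:UI_Cesaro_av}, \ref{lem:UI_mathematic_expactation}, \ref{lem:UI_exchange_limit_and_integral} then dispose of the boundary pieces $\tfrac1t\int_0^\tau$, $\tfrac1t\int_t^{t+\tau}$ and the event $\{\tau>t\}$. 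This is precisely why the lemma is phrased with $Q_tf(z)$ rather than $\langle f,\mu\rangle$ on the right-hand side: the two conclusions are proved in parallel and only combined afterwards, rather than attacking $|Q_tf(x)-\langle f,\mu\rangle|$ directly via a coupling.
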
     
    Invoking Lemma \ref{lemma:uniformly_cesaro_loss_of_memory},  it directly follows that
    \begin{equation*}
			\limsup\limits_{t \to \infty} \sup_{f\in\mathfrak{F}}|Q_tf(x)-\langle f,\mu\rangle | = 0,\quad 
            \forall\, x\in\X,
		\end{equation*}
    which implies the desired result.
\end{proof}

It remains to establish Lemma \ref{lemma:uniformly_cesaro_loss_of_memory}, which is based on a coupling construction in the following.
\paragraph{Proof of Lemma \ref{lemma:uniformly_cesaro_loss_of_memory}}
\begin{proof}
    Let $(\Omega,\mathcal{F},\{\Phi_t\}_{t\ge 0}, \{\mathcal{F}_t\}_{t\ge 0}, \{\P^x\}_{x\in \X})$ be a Markov family with associated Markov operators $\{P_t\}_{t\ge0}$. Define the probability measure 
	\begin{equation*}
		\P^{\nu}(\cdot) := \int_\X \P^{x}(\cdot)\nu(\d x)\quad\forall\, \nu \in \mathcal{P}(\X).
	\end{equation*}
    The expectation taken under $\P^{\nu}$ is denoted by $\E^{\nu}$.
    It suffices to show for any $\varepsilon >0$,
	\begin{equation}\label{goal1}
        \limsup\limits_{t \to \infty} \sup_{f\in\mathfrak{F}}\left|\frac{1}{t}\int_0^t\mathbb{E}^{\nu}f(\Phi_s)\d s - Q_tf(z) \right|\le 5\varepsilon,
	\end{equation}
    where $\nu=\delta_x$ for any $x\in\X$ or $\nu=\mu$.
    The proof is divided into two steps.

    \vspace{2mm}
    
    \noindent $\mathbf{Step\;1. \ Hitting \ Time\ Analysis.}$
    
    Given $\varepsilon >0$, by uniformly Ces\`aro eventual continuity w.r.t $\mathfrak{F}$, there exists $r > 0$ such that for all $ z' \in B(z,r)$, 
    \begin{equation}\label{eq:cesaro_local_loss_of_memory}
		\limsup\limits_{t \to \infty} \sup_{f\in\mathfrak{F}}|Q_tf(z') - Q_tf(z) | \le \varepsilon.	
	\end{equation}
    Define the stopping time $\tau := \inf\{ t \ge 0 : \Phi_t \in \overline{B(z, r)} \}$. The main goal of this step is to show: 
    \begin{equation}
        \P^{\nu} (\tau < \infty) =1,\quad\forall\,\nu \in \mathcal{P}(\X).
        \label{eq:stoping_time_is_finte}
    \end{equation}
    
    In view of lower bound condition \eqref{eq:LBC_3}, define
    \begin{equation*}
        \beta_z(r):= \inf_{x\in\X} \liminf\limits_{t \to \infty} Q_t(x,B(z,r)) >0.
    \end{equation*}
    By Fatou's lemma, for all $\nu \in \mathcal{P}(\X)$,
	\begin{equation*}
		\liminf\limits_{t\to\infty}\int_\X Q_t(x,B(z,r))\nu(\d x)
		\ge\int_\X\liminf\limits_{t \to\infty} Q_t(x,B(z,r))\nu(\d x)\ge\beta_z(r),
	\end{equation*}
	Hence, $\forall\,\nu\in\mathcal{P}(\X)$, there exists $T=T(\nu)$ such that
	\begin{equation*}
		\frac{1}{T}\int_0^T \P^{\nu} (X_{t}\in B(z,r)) \d t \ge\frac12 \beta_z(r),
	\end{equation*}
	which implies 
    \begin{equation*}
        \P^{\nu}(\tau\le T)\ge\sup_{t\in[0,T]} \P^{\nu}(X_{t}\in B(z,r))\ge\frac{1}{T}\int_0^T \P^{\nu}(X_{t}\in B(z,r))\d t\ge\beta_z(r)/2.
    \end{equation*}
    Set $\nu_0 =\nu$, and for $i\in\N$
    recursively define
    \begin{equation*}
         t_{i} :=T(\nu_i);\quad \nu_{i+1}(\cdot) := \P^{\nu_i}(\Phi_{t_{i+1}} \in \cdot | t_i \le \tau).
    \end{equation*}
    By Markov property,
	\begin{equation*}
		\begin{aligned}
		    \P^{\nu} (\tau = \infty) \le& \P^{\nu_0} (\tau >t_0 + t_1 + \cdots + t_n )\\
            \le& \P^{\nu_1} (\tau > t_1 + \cdots + t_n) \cdot \left(1-\frac{\beta_z(r)}{2}\right)\le \dots \le\left( 1-\frac{\beta_z(r)}{2}\right)^n.
		\end{aligned}
	\end{equation*}
    Then relation \eqref{eq:stoping_time_is_finte} is ensured by taking $ n \to \infty$.

\vspace{2mm}

\noindent $\mathbf{Step\;2. \ Decomposition\ and\ Estimation.}$ 

We compute that
    \begin{align*}
    \limsup_{t \to \infty} \sup_{f \in \mathfrak{F}} \left| \mathbb{E}^{\nu} \left[ \frac{1}{t} \int_0^t f(\Phi_s) \, \d s \right] - Q_t f(z) \right|\le&\limsup_{t \to \infty} \sup_{f \in \mathfrak{F}} \left| \mathbb{E}^{\nu} \left[ \frac{1}{t} \int_0^t f(\Phi_s) \, \d s \cdot \chi_{\{\tau \le t\}} \right] - Q_t f(z) \right| \\
    &+ \limsup_{t \to \infty} \sup_{f \in \mathfrak{F}} \left| \mathbb{E}^{\nu} \left[ \frac{1}{t} \int_0^t f(\Phi_s) \, \d s \cdot \chi_{\{\tau > t\}} \right] \right| \\
    := &I + I\!I.
\end{align*}

One useful fact is that the family $\left\{ \frac{1}{t} \int_0^t V(\Phi_s) \, \mathrm{d}s \right\}_{t \ge 0}$ is uniformly integrable under $\mathbb{P}^\nu$ for $\nu=\mu \text{ or }\delta_x$ where $x\in\X$. Actually, by Lemma \ref{lem:UI_Cesaro_av}, it suffice to show the uniform integrability of $\{V(\Phi_t)\}_{t \ge 0}$.

When $\nu = \mu$, the uniform integrability of $\{V(\Phi_t)\}_{t \ge 0}$ is guaranteed by the fact that it is identically distributed under $\mathbb{P}^\mu$, and $\langle V(\Phi_t), \mathbb{P}^\mu \rangle = \langle V, \mu \rangle < \infty$; when $\nu = \delta_x$, the uniformly integrability is guaranteed by Hypothesis $\mathbf{(H_2)}$.

Therefore, for given $\varepsilon>0$, there exists $\Delta>0$ such that for any $A\in\mathcal{F}$ satisfying $\P^{\nu}(A)\le \Delta$, one has
\begin{equation}
    \E^\nu\left\{\frac{1}{t}\int_0^tV(\Phi_s)\d s\cdot \chi_A\right\}\le  \varepsilon.
\end{equation}

Let us fix any $T_0=T_0(\nu)>0$ sufficiently large be such that $\P^\nu(\tau >T_0)<\Delta$. 

For term $I\!I$, 
\begin{equation*}
I\!I \le \limsup_{t \to \infty} \mathbb{E}^\nu \left[ \frac{1}{t} \int_0^t V(\Phi_s) \, \mathrm{d}s \cdot \chi_{\{\tau > T_0\wedge t\}} \right] \le \varepsilon.
\end{equation*}

For term $I$, we decompose it as follows:
\begin{align*}
    I \le{} &\limsup_{t \to \infty} \mathbb{E}^{\nu} \left\{ \sup_{f \in \mathfrak{F}} \left| \mathbb{E}^{\nu} \left[ \frac{1}{t} \int_\tau^{t+\tau} f(\Phi_s) \d s \,\middle|\, \mathcal{F}_\tau \right] \chi_{\{\tau \le t\}} - Q_t f(z) \right| \right\} \\
    &\quad + \limsup_{t \to \infty} \sup_{f \in \mathfrak{F}} \left| \mathbb{E}^{\nu} \left[ \frac{1}{t} \int_0^\tau f(\Phi_s) \d s \cdot \chi_{\{\tau \le t\}} \right] \right| \\
    &\quad + \limsup_{t \to \infty} \sup_{f \in \mathfrak{F}} \left| \mathbb{E}^{\nu} \left[ \frac{1}{t} \int_t^{t+\tau} f(\Phi_s) \d s \cdot \chi_{\{\tau \le t\}} \right] \right| \\
    :=&I_1 + I_2 + I_3.
\end{align*}

Note that
\begin{equation*}
    \sup_{f \in \mathfrak{F}} \left| \mathbb{E}^{\nu} \left[ \frac{1}{t} \int_\tau^{t+\tau} f(\Phi_s) \d s \,\middle|\, \mathcal{F}_\tau \right] \chi_{\{\tau \le t\}} - Q_t f(z) \right| 
    \le 2 \cdot \mathbb{E}^{\nu} \left[ \frac{1}{2t} \int_0^{2t} V(\Phi_s) \d s \,\middle|\, \mathcal{F}_\tau \right] + \sup_{t \ge 0} Q_t V(z).
\end{equation*}
By Lemma \ref{lem:UI_mathematic_expactation}, the family $\left\{ \mathbb{E}^{\nu} \left[ \frac{1}{2t} \int_0^{2t} V(\Phi_s) \d s \,\middle|\, \mathcal{F}_\tau \right] \right\}$ is uniformly integrable, and hence the left-hand side of the above inequality is uniformly integrable.

Applying Lemma \ref{lem:UI_exchange_limit_and_integral} and strong Markov property, we obtain
\begin{align*}
    I_1 &{}\le \mathbb{E}^{\mu} \left\{ \limsup_{t \to \infty} \sup_{f \in \mathfrak{F}} \left| \mathbb{E}^{\mu} \left[ \frac{1}{t} \int_\tau^{t+\tau} f(\Phi_s) \d s \,\middle|\, \mathcal{F}_\tau \right] \chi_{\{\tau \le t\}} - Q_t f(z) \right| \right\} \\
    &={} \mathbb{E}^{\mu} \left\{ \limsup_{t \to \infty} \sup_{f \in \mathfrak{F}} \left| \mathbb{E}^{\Phi_\tau} \left[ \frac{1}{t} \int_0^t f(\Phi_s) \d s \right] \chi_{\{\tau \le t\}} - Q_t f(z) \right| \right\}.
\end{align*}

Thanks to \eqref{eq:cesaro_local_loss_of_memory} and the fact that $\P^\mu(\Phi_\tau \in B(z, r)) = 1$, we have
\begin{align*}
    &\limsup_{t \to \infty} \sup_{f \in \mathfrak{F}} \left| \mathbb{E}^{\Phi_\tau} \left[ \frac{1}{t} \int_0^t f(\Phi_s) \d s \right] \chi_{\{\tau \le t\}} - Q_t f(z) \right| \\
    &= \limsup_{t \to \infty} \sup_{f \in \mathfrak{F}} \left| \mathbb{E}^{\Phi_\tau} \left[ \frac{1}{t} \int_0^t f(\Phi_s) \d s \right] - Q_t f(z) \right| \\
    &= \limsup_{t \to \infty} \sup_{f \in \mathfrak{F}} \left| Q_t f(\Phi_\tau) - Q_t f(z) \right| \le \varepsilon, \quad \P^\mu\text{-a.s.}
\end{align*}
This implies that $I_1 \le \varepsilon$.

 In what follows, let us deal with the remaining terms $I_2$ and $I_3$. Recall that $\left\{\frac{1}{t}\int_0^tV(\Phi_s)\d s\right\}_{t\ge0}$ is uniformly integrable. Thus 
\begin{align*}
    I_2&\le \limsup_{t\ge T_0,t\to\infty}\E^\nu\left[\frac{1}{t}\int_0^{\tau}V(\Phi_s)\chi_{\{\tau\leq t\}}\chi_{\{\tau\leq T_0\}}\d s\right]+\limsup_{t\ge T_0,t\to\infty}\E^\nu\left[\frac{1}{t}\int_0^{\tau}V(\Phi_s)\chi_{\{\tau\leq t\}}\chi_{\{\tau> T_0\}}\d s\right]\\
    &\le \limsup_{t\ge T_0,t\to\infty}\E^\nu\left[\frac{1}{t}\int_0^{T_0}V(\Phi_s)\d s\right]+\limsup_{t\ge T_0,t\to\infty}\E^\nu\left[\frac{1}{t}\int_0^{t}V(\Phi_s)\d s\chi_{\{\tau> T_0\}}\right]\\    
    &=\limsup_{t\to\infty}\left[\frac{1}{t}\int_0^{t}\E^\nu V(\Phi_s)\d s\right]+\varepsilon.
\end{align*}
And similarly, 
\begin{align*}
    I_3&\le \limsup_{t\ge T_0,t\to\infty}\E^\nu\left[\frac{1}{t}\int_t^{t+T_0}V(\Phi_s)\d s\right]+2\cdot\limsup_{t\ge T_0,t\to\infty}\E^\nu\left[\frac{1}{2t}\int_0^{2t}V(\Phi_s)\d s\chi_{\{\tau> T_0\}}\right]\\
    &= \limsup_{t\to\infty}\E^\nu\left[\frac{1}{t}\int_t^{t+T_0}V(\Phi_s)\d s\right]+2\cdot\limsup_{t\to\infty}\E^\nu\left[\frac{1}{2t}\int_0^{2t}V(\Phi_s)\d s\chi_{\{\tau> T_0\}}\right]\\
   & \le \limsup_{t\to\infty}\left[\frac{1}{t}\int_t^{t+T_0}\E^\nu V(\Phi_s)\d s\right]+2\varepsilon.
\end{align*}

When $\nu=\mu$,
\begin{align*}    &\limsup_{t\to\infty}\left[\frac{1}{t}\int_0^{T_0}\E^\nu V(\Phi_s)\d s\right]=\limsup_{t\to\infty}\left[\frac{1}{t}\int_t^{t+T_0}\E^\nu V(\Phi_s)\d s\right]=\limsup_{t\to\infty}\frac{T_0}{t}\cdot\langle V,\mu\rangle=0.
\end{align*}

When $\nu=\delta_x$, \begin{equation*}
    \limsup_{t\to\infty}\left[\frac{1}{T_0}\int_0^{t}\E^\nu V(\Phi_s)\d s\right]=\limsup_{t\to\infty}\left[\frac{1}{t}\int_t^{t+T_0}\E^\nu V(\Phi_s)\d s\right]\le\limsup_{t\to\infty}\frac{T_0}{t}\cdot\sup_{t\ge0}P_tV(x)=0.
\end{equation*}

To conclude, $I\le I_1+I_2+I_3\le4\varepsilon$ and \begin{equation*}
    \limsup_{t \to \infty} \sup_{f \in \mathfrak{F}} \left| \mathbb{E}^{\nu} \left[ \frac{1}{t} \int_0^t f(\Phi_s) \, \d s \right] - Q_t f(z) \right|\le I+I\!I\le5\varepsilon,
\end{equation*}
which completes the proof.

\end{proof} 

\subsection{Proof of Proposition \ref{lem:S_Lypv}}
\begin{proof}
    Since $\mathcal{L}$ is the generator of Feller process $\Phi$ (see \cite[{Definition 3.2.1}]{Kulik2018}) ,
    \begin{equation*}  V(\Phi_t)=V(\Phi_0)+\int_0^t\mathcal{L}V(\Phi_s)\d s + M_t,
    \end{equation*} 
    where $M_t$ is a local martingale. Then there exists a sequence of nondeceasing random variables $\{T_n\}_{n=1}^\infty$ with $\lim\limits_{n\to\infty}T_n=\infty$ a.s. such that $M_{t\wedge T_n}$ is a martingale for all $n\in\N$. Hence
    \begin{equation*}
        \E^x V(\Phi_{t\wedge T_n})\le  V(\Phi_0)-\E^x\int_0^{t\wedge T_n}(\varphi(V(\Phi_s))\d s+C\E^x[t\wedge T_n].
    \end{equation*}
    Since $\lim\limits_{t\to\infty}T_n=\infty$, $\varphi(V(\Phi_s))\ge0$, we have 
    \begin{equation*}
        \E^x V(\Phi_{t})\le\liminf\limits_{n\to\infty}\E^x V(\Phi_{t\wedge T_n})\le\limsup\limits_{n\to\infty}\E^x V(\Phi_{t\wedge T_n})\le  V(\Phi_0)-\E^x\int_0^t(\varphi(V(\Phi_s))\d s+Ct
    \end{equation*}

    Since $\varphi$ is increasing and concave, $\varphi^{-1}$ exists and it is convex. Define $U=\varphi\circ V$, then $V=\varphi^{-1}\circ U$.

    By Jensen's inequality, we have
    \begin{equation*}
        \varphi^{-1}(\E^x U(\Phi_t))\le  E^xV(\Phi_t) \le V(\Phi_0)-\int_0^t(\E^x\varphi(V(\Phi_s))\d s+Ct=  V(\Phi_0)-\int_0^t(\E^xU(\Phi_s))\d s+Ct.
    \end{equation*}
     By comparison principle, since the solution $f(t)\in C^1(\R_+)$ of the following ordinary differential equation:
    \begin{equation*}
    \left\{
        \begin{aligned}
        \frac{df(t)}{dt}&={(-f(t)+C)}{(\varphi'\circ\varphi^{-1}(f(t)))}\\
        f(0)&=U(x)
    \end{aligned}\right.
    \end{equation*}
    converges to some fixed point, i.e. $\lim\limits_{t\to\infty}f(t)<\infty$, hence $\E^x V(\Phi_t)$ is uniformly bounded for all $x\in\X$.

    To be precise, this is because $h(x):={(-x+c)}{(\varphi'\circ\varphi^{-1}(x))}$ has only one fixed point $x=c$ and when $0<x< C$, $h(x)>0$ and when $x> C$, $h(x)<0$. This implies that the fix point is stable.

\end{proof}

    \vspace{5mm}
    \textbf{Acknowledgments.}  \quad  This work is supported by National Natural Science Foundation of China (No.12231002). The authors wish to express our sincere thanks to Professor Yong Liu, our supervisor, for many enlightening discussions and persistent instructions.
 \appendix
 \section{Properties of uniformly integrable random variables}\label{appendix:UI}
 In this appendix, we introduce the properties of uniformly integrable random variables which are useful in the proof of our main theorem.
  Let $\{\xi_t\}_{t\ge0}$ be a family of random variables on the probability space $(\Omega,\mathcal{F},\P)$.
  \begin{definition}
      $\{\xi_t\}_{t\ge0}$ is said to be unifromly integrable, if 
      \begin{equation*}
          \lim_{K\to\infty}\sup_{t\ge0}\E(|\xi_t|\chi_{\{|\xi_t|\ge K\}})=0.
      \end{equation*}
  \end{definition}

  \begin{lemma}\label{lem:boundedness_implies_UI}
    Let $\{\xi_t\}_{t\ge 0}$ be a family of random variables and $g:\R_+\rightarrow\R_+$ satisfying $\lim\limits_{a\to\infty}g(a)/a=\infty$. If 
    \begin{equation}\label{eq:bounded_in_L1}
        \sup_{t\ge0}\E(g(|\xi_t|))<\infty,
    \end{equation}
then $\{\xi_t\}_{t\ge 0}$ is uniformly integrable.
\end{lemma}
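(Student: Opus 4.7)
The plan is to apply the classical de la Vallée Poussin-type trick: the condition $\lim_{a \to \infty} g(a)/a = \infty$ means that $g$ grows superlinearly, so on any tail $\{|\xi_t| \ge K\}$ we can dominate $|\xi_t|$ by $g(|\xi_t|)/M$ for an arbitrarily large prefactor $M$, provided $K$ is chosen sufficiently large. Combined with the uniform $L^1$-bound on $g(|\xi_t|)$, this forces the tail integrals of $|\xi_t|$ to be uniformly small.

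To carry this out, I would first denote $C := \sup_{t \ge 0} \E[g(|\xi_t|)] < \infty$, which is finite by hypothesis \eqref{eq:bounded_in_L1}. Fixing an arbitrary $\varepsilon > 0$, set $M := C/\varepsilon$. By the growth assumption $\lim_{a \to \infty} g(a)/a = \infty$, there exists a threshold $K_0 = K_0(\varepsilon) > 0$ such that $g(a)/a \ge M$, equivalently $a \le g(a)/M$, for all $a \ge K_0$. Then for any $K \ge K_0$ and any $t \ge 0$, one writes
\begin{equation*}
    \E\bigl[|\xi_t|\,\chi_{\{|\xi_t| \ge K\}}\bigr]
    \le \frac{1}{M}\,\E\bigl[g(|\xi_t|)\,\chi_{\{|\xi_t| \ge K\}}\bigr]
    \le \frac{C}{M} = \varepsilon.
\end{equation*}

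Taking the supremum over $t \ge 0$ and then letting $K \to \infty$ yields $\lim_{K \to \infty} \sup_{t \ge 0} \E[|\xi_t|\,\chi_{\{|\xi_t| \ge K\}}] = 0$, which is precisely the definition of uniform integrability. Since the growth hypothesis on $g$ does essentially all of the work and no further probabilistic machinery is required, I do not anticipate any genuine obstacle in this argument; the proof is short and self-contained.
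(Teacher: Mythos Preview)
Your proof is correct and is exactly the standard de la Vall\'ee Poussin argument. The paper does not actually give its own proof of this lemma; it merely cites \cite[Theorem 4.6.2]{Durrett2019}, whose proof is essentially the one you wrote.
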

See e.g. \cite[Theorem 4.6.2]{Durrett2019} for a proof.\vspace{2mm}

\begin{lemma}
\label{lem:UI_compare}
    Let $\{\eta_t\}_{t\ge0}$ be another family of random variables. Suppose $|\xi_t|\le |\eta_t|$ and $\{\eta_t\}_{t\ge0}$ is uniformly integrable, then $\{\xi_t\}_{t\ge0}$ is uniformly integrable.
\end{lemma}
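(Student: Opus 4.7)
The plan is to argue directly from the definition of uniform integrability, using the pointwise domination $|\xi_t|\le|\eta_t|$ to convert the tail estimate for $\eta_t$ into one for $\xi_t$. The whole proof should be just a few lines, so rather than any clever construction I would simply chase the inequalities carefully.

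First, I would observe that the pointwise bound $|\xi_t|\le|\eta_t|$ gives the set inclusion $\{|\xi_t|\ge K\}\subseteq\{|\eta_t|\ge K\}$ for every $K>0$ and every $t\ge0$. Consequently,
\begin{equation*}
\E\bigl(|\xi_t|\chi_{\{|\xi_t|\ge K\}}\bigr)\le \E\bigl(|\eta_t|\chi_{\{|\xi_t|\ge K\}}\bigr)\le \E\bigl(|\eta_t|\chi_{\{|\eta_t|\ge K\}}\bigr),
\end{equation*}
where the first inequality uses $|\xi_t|\le|\eta_t|$ and the second uses the inclusion of events together with non-negativity of $|\eta_t|$.

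Next, I would take the supremum over $t\ge0$ on both sides and then let $K\to\infty$. By the assumed uniform integrability of $\{\eta_t\}_{t\ge0}$, the right-hand side tends to zero, so
\begin{equation*}
\lim_{K\to\infty}\sup_{t\ge0}\E\bigl(|\xi_t|\chi_{\{|\xi_t|\ge K\}}\bigr)=0,
\end{equation*}
which is precisely the definition of uniform integrability for $\{\xi_t\}_{t\ge0}$. There is essentially no obstacle here; the only thing to be careful about is the direction of the inequalities on the indicator sets, which is why it is useful to write out the two-step chain above rather than try to collapse it into one inequality.
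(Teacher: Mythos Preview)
Your proof is correct and follows the same approach as the paper: both use the pointwise bound to get the set inclusion $\{|\xi_t|\ge K\}\subseteq\{|\eta_t|\ge K\}$ and then bound $\E(|\xi_t|\chi_{\{|\xi_t|\ge K\}})$ by $\E(|\eta_t|\chi_{\{|\eta_t|\ge K\}})$. Your version is slightly more explicit in writing out the intermediate step $\E(|\eta_t|\chi_{\{|\xi_t|\ge K\}})$, but the argument is identical.
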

\begin{proof}
    Since $|\xi_t|\le |\eta_t|$, and $\{|\xi_t|> K\}\subset \{|\eta_t|>K\}$,
    \begin{equation*}
        \E(|\xi_t|\chi_{\{|\xi_t|\ge K\}})\le \E(|\eta_t|\chi_{\{\eta_t|\ge K\}}).
    \end{equation*}
    By definition of uniform integrability, the proof is completed.
\end{proof}
This lemma states that the family of random variables controlled by a family of uniformly integrable random variables is itself uniformly integrable.
\begin{lemma}
    $\{\xi_t\}_{t\ge0}$ is uniformly integrable is equivalent to $\{|\xi_t|\}_{t\ge0}$  is uniformly integrable.
\end{lemma}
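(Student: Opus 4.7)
The plan is to reduce the equivalence directly to the definition itself. By the definition given at the start of the appendix, uniform integrability of $\{\xi_t\}_{t\ge 0}$ amounts to
\begin{equation*}
\lim_{K\to\infty}\sup_{t\ge 0}\E\bigl(|\xi_t|\chi_{\{|\xi_t|\ge K\}}\bigr)=0.
\end{equation*}
Applying the same definition verbatim to the family $\{|\xi_t|\}_{t\ge 0}$ produces
\begin{equation*}
\lim_{K\to\infty}\sup_{t\ge 0}\E\bigl(\bigl||\xi_t|\bigr|\chi_{\{||\xi_t||\ge K\}}\bigr)=0.
\end{equation*}
Since $\bigl||\xi_t|\bigr|=|\xi_t|$ pointwise on $\Omega$ and therefore $\{||\xi_t||\ge K\}=\{|\xi_t|\ge K\}$, the two displayed conditions are literally the same numerical expression, so no inequality or approximation is needed and the equivalence is immediate.

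Alternatively, one may invoke Lemma \ref{lem:UI_compare} in both directions. Setting $\eta_t:=|\xi_t|$, the pointwise bound $|\xi_t|\le|\eta_t|$ (with equality) shows that uniform integrability of $\{|\xi_t|\}$ forces that of $\{\xi_t\}$. The reverse implication follows symmetrically by swapping the roles, using $\bigl||\xi_t|\bigr|\le|\xi_t|$. Either route yields the conclusion in one line.

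There is essentially no obstacle to overcome; the role of this lemma is purely bookkeeping, ensuring that subsequent uniform integrability arguments (in particular those used in the proofs of Lemmas \ref{lemma:uniformly_loss_of_memory} and \ref{lemma:uniformly_cesaro_loss_of_memory} in Section \ref{Sec:4}) may freely replace $\xi_t$ by $|\xi_t|$ and vice versa without further justification.
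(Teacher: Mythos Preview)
Your proof is correct and follows essentially the same approach as the paper: both simply observe that, since the definition of uniform integrability is stated in terms of $|\xi_t|$, applying it to $\{|\xi_t|\}$ yields the identical condition. Your alternative via Lemma~\ref{lem:UI_compare} is also fine but unnecessary.
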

\begin{proof}
 By definition,
 \begin{align*}
        &\{\xi_t\}_{t\ge0}\text{ is uniformly integrable }\\\iff&\lim\limits_{K\to\infty} \sup_{t\ge0}\E(|X_t|\chi_{\{|X_t|> K\}})=0\\
        \iff  &\{|\xi_t|\}_{t\ge0}\text{ is uniformly integrable }.
    \end{align*}
\end{proof}
This lemma show that the sign of random variables does not change the uniform integrability.
\begin{lemma}\label{lem:UI_mathematic_expactation}
    Let $\{\xi_t\}_{t\ge0}$ be a family of uniformly integrable random variables and $\mathcal{G}$ be a sub $\sigma$-algebra of $\mathcal{F}$. Then $\{\E[\xi_t|\mathcal{G}]\}_{t\ge 0}$ is uniformly integrable.
\end{lemma}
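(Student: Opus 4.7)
My plan is to work directly with the definition of uniform integrability given in the appendix, namely to show that $\lim_{K\to\infty}\sup_{t\ge0}\E[|\eta_t|\chi_{\{|\eta_t|\ge K\}}]=0$ for $\eta_t:=\E[\xi_t\mid \mathcal{G}]$. The two ingredients that make this possible are (a) conditional Jensen's inequality together with the tower property, which lets me transfer an integral of $|\eta_t|$ against a $\mathcal{G}$-measurable indicator into an integral of $|\xi_t|$ against the same indicator, and (b) a uniform bound $\sup_t \E|\xi_t|=:M<\infty$, which is a standard consequence of the uniform integrability of $\{\xi_t\}$ (this is the only auxiliary fact I need beyond what is already stated in the appendix).

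The key steps are as follows. First, by conditional Jensen's inequality, $|\eta_t|\le\E[|\xi_t|\mid\mathcal{G}]$, so taking expectations yields $\E|\eta_t|\le\E|\xi_t|\le M$ for all $t\ge0$. Second, set $A^t_K:=\{|\eta_t|\ge K\}$ and observe that $A^t_K\in\mathcal{G}$ since $\eta_t$ is $\mathcal{G}$-measurable; the tower property then gives
\begin{equation*}
\E[|\eta_t|\chi_{A^t_K}]\le \E[\E[|\xi_t|\mid\mathcal{G}]\chi_{A^t_K}]=\E[|\xi_t|\chi_{A^t_K}].
\end{equation*}
Third, I decompose the right-hand side by a truncation parameter $L>0$:
\begin{equation*}
\E[|\xi_t|\chi_{A^t_K}]\le \E[|\xi_t|\chi_{\{|\xi_t|\ge L\}}]+L\cdot\P(A^t_K),
\end{equation*}
and estimate $\P(A^t_K)\le \E|\eta_t|/K\le M/K$ by Markov's inequality. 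Given $\varepsilon>0$, the uniform integrability of $\{\xi_t\}$ lets me first choose $L$ so that the first term is at most $\varepsilon/2$ uniformly in $t$; then I choose $K$ with $LM/K\le\varepsilon/2$, which makes the second term uniformly small. Taking $\sup_t$ and then $K\to\infty$ gives the desired conclusion.

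There is no substantial obstacle here: the only subtle point is making sure the ``exceptional'' set $A^t_K$ is $\mathcal{G}$-measurable so that the tower property applies cleanly, and that the Markov estimate on $\P(A^t_K)$ is uniform in $t$, which follows from the uniform $L^1$-bound on $\{\xi_t\}$ derived at the outset. Everything else is routine.
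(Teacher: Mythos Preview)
Your proof is correct and follows essentially the same approach as the paper: both use conditional Jensen to dominate $|\eta_t|$ by $\E[|\xi_t|\mid\mathcal{G}]$, exploit the $\mathcal{G}$-measurability of $\{|\eta_t|\ge K\}$ together with the tower property to transfer the truncated expectation back to $|\xi_t|$, and then control the resulting integral via Markov's inequality and the uniform $L^1$-bound. The only cosmetic difference is that the paper invokes the equivalent characterization of uniform integrability as ``uniform absolute continuity plus $L^1$-boundedness'' directly, whereas you spell out the truncation-at-level-$L$ decomposition that underlies this equivalence.
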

\begin{proof}
    By Jensen's inequality, 
    \begin{equation*}
        |\E [\xi_t|\mathcal{G}]|\le \E [|\xi_t||\mathcal{G}].
    \end{equation*}
    By Lemma \ref{lem:UI_compare}, without loss of generality we can assume $\xi_t\ge0$ for all $t\ge0$.
    \begin{align*}
        &\E\left\{\E[\xi_t|\mathcal{G}]\chi_{\{\E[\xi_t|\mathcal{G}]> K\}}\right\}\\
        =&\E\left\{\E[\xi_t\chi_{\{\E[\xi_t|\mathcal{G}]>K\}}\right\}|\mathcal{G}]\\
        =&\E[\xi_t\chi_{\{\E[\xi_t|\mathcal{G}]> K\}}].
    \end{align*}
    Since 
    \begin{equation*}
        \P(\E[\xi_t|\mathcal{G}]> K)\le \frac{\sup_{t\ge0}\E[\xi_t]}{K}.
    \end{equation*}
    Because $\{\xi_t\}$ is uniformly integrable and hence uniformly absolutely continuous and uniformly bounded in $L^1$, we have
    \begin{equation*}
        \lim\limits_{K\to\infty}\E\left\{\E[\xi_t|\mathcal{G}]\chi_{\{\E[\xi_t|\mathcal{G}]> K\}}\right\}=0.
    \end{equation*}
\end{proof}
This lemma indicates that uniformly integrability remains after taking mathematic conditional expectation.

\begin{lemma}\label{lem:UI_Cesaro_av}
    Suppose $\{\xi_t\}_{t\ge0}$ is uniformly integrable, then $\{\frac{1}{t}\int_0^t\xi_s\d s\}_{t\ge0}$ is uniformly integrable.
\end{lemma}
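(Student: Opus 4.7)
The plan is to verify uniform integrability of $\eta_t := \frac{1}{t}\int_0^t \xi_s\,\d s$ via the Dunford--Pettis-type criterion: a family is UI iff it is bounded in $L^1$ and uniformly absolutely continuous. First, since $|\eta_t| \le \frac{1}{t}\int_0^t |\xi_s|\,\d s$ and the preceding lemma shows $\{|\xi_t|\}_{t\ge 0}$ is UI whenever $\{\xi_t\}_{t\ge 0}$ is, Lemma~\ref{lem:UI_compare} allows us to assume without loss of generality that $\xi_t \ge 0$.

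Next I would check the two ingredients separately. For $L^1$-boundedness, Fubini together with the fact that UI implies $L^1$-boundedness yields $\E\eta_t = \frac{1}{t}\int_0^t \E\xi_s\,\d s \le \sup_s \E\xi_s =: M < \infty$. For uniform absolute continuity, given $\varepsilon>0$, UI of $\{\xi_t\}_{t\ge 0}$ provides $\delta>0$ such that $\sup_s \E[\xi_s\chi_A] < \varepsilon$ whenever $\P(A) < \delta$; Fubini then gives
\begin{equation*}
\E[\eta_t \chi_A] \;=\; \frac{1}{t}\int_0^t \E[\xi_s\chi_A]\,\d s \;<\; \varepsilon,
\end{equation*}
uniformly in $t \ge 0$ and in all such $A$.

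To conclude UI, Markov's inequality gives $\P(\eta_t > K) \le M/K$ uniformly in $t$, so choosing $K$ large enough that $M/K < \delta$ and applying the uniform absolute continuity with the event $A_t := \{\eta_t > K\}$ delivers $\sup_{t\ge 0} \E[\eta_t \chi_{\{\eta_t > K\}}] \le \varepsilon$. Since $\varepsilon > 0$ is arbitrary and $K$ can be taken arbitrarily large accordingly, this is exactly the definition of uniform integrability of $\{\eta_t\}_{t\ge 0}$.

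The main obstacle is not conceptual but rather making sure that $\delta$ and the threshold $K$ can be chosen uniformly in $t$; the quantitative Markov bound with the \emph{uniform} constant $M$ is the key step that enables this. An alternative, shorter route would be to invoke the converse direction of de la Vallée Poussin to produce a nondecreasing convex superlinear $g$ with $\sup_t \E g(\xi_t)<\infty$, then apply Jensen's inequality to get $\E g(\eta_t) \le \frac{1}{t}\int_0^t \E g(\xi_s)\,\d s \le \sup_s \E g(\xi_s)$, and finish via Lemma~\ref{lem:boundedness_implies_UI}; however, this relies on a classical fact not developed in the appendix, whereas the route above uses only the tools already introduced.
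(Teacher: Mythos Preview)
Your proof is correct and follows essentially the same route as the paper: reduce to $\xi_t\ge 0$, then verify $L^1$-boundedness and uniform absolute continuity of the Ces\`aro averages by pushing the expectation inside the time integral via Fubini. If anything, your write-up is more self-contained, since you explicitly close the loop from $L^1$-boundedness plus uniform absolute continuity back to the defining tail condition via Markov's inequality, whereas the paper leaves that equivalence implicit.
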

\begin{proof}
    Without loss of generality, assume $\xi\ge0$ for all $t\ge0$. 
    
    First, we show it is uniformly bounded in $L^1$:
    \begin{equation*}
        \E\left\{\frac{1}{t}\int_0^t\xi_s\d s\right\}=\frac{1}{t}\int_0^t\E\xi_s\d s\le\sup_{t\ge0}\E\xi_t<\infty.
    \end{equation*}
    The last inequality is due to the uniform integrability of $\{\xi_t\}_{t\ge0}$.
    
    Next, we show it is uniformly absolutely continuous: 
    Given $A\in\mathcal{F}$, by uniformly absolutely continuous of $\{\xi_t\}_{t\ge0}$, given $\varepsilon>0$, there exists $\Delta>0$, when $\P(A)\le\Delta$, $\sup_{t\ge0}\E(\xi\chi_A)\le\varepsilon$. Hence
    \begin{equation*}
        \E\left\{\frac{1}{t}\int_0^t\xi_s\d s\chi_A\right\}=\frac{1}{t}\int_0^t\E(\xi_s\chi_A)\d s\le \varepsilon.
    \end{equation*}
\end{proof}
This lemma shows that the Ces\`aro average of a family of uniformly integrable random variables is uniformly integrable.

Finally, we show the usefulness of uniform integrability in exchanging the integral and limit process.
{\begin{lemma}\label{lem:UI_exchange_limit_and_integral}
    If $\{\xi_t\}_{t\ge0}$ is nonnegative and uniformly integrable, then
    \begin{equation*}
        \limsup_{t\to\infty}\E \xi_t\le \E \limsup_{t\to\infty} \xi_t.
    \end{equation*}
    
\end{lemma}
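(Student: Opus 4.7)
The plan is to establish this reverse-Fatou-type inequality by combining the classical reverse Fatou lemma, which demands a dominating integrable function, with a truncation argument that uses uniform integrability to handle the unbounded tails. The essential idea is that uniform integrability serves as a substitute for domination, but only after cutting off large values at a truncation level $K$ and sending $K \to \infty$ at the end.

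First, for each fixed $K>0$, I would decompose $\xi_t = (\xi_t \wedge K) + (\xi_t - K)^+$, so that
\[
\E \xi_t \;=\; \E[\xi_t \wedge K] + \E[(\xi_t - K)^+].
\]
The truncated part $\xi_t \wedge K$ is dominated by the constant $K$, which is integrable on the probability space $(\Omega,\mathcal{F},\P)$, so the classical reverse Fatou lemma applies and gives
\[
\limsup_{t\to\infty} \E[\xi_t \wedge K] \;\le\; \E\!\left[\limsup_{t\to\infty}(\xi_t \wedge K)\right] \;\le\; \E\!\left[\limsup_{t\to\infty} \xi_t\right],
\]
where the last inequality uses the pointwise bound $\xi_t \wedge K \le \xi_t$. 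For the tail term, the elementary estimate $(\xi_t - K)^+ \le \xi_t \chi_{\{\xi_t > K\}}$ together with uniform integrability yields
\[
\sup_{t\ge 0} \E[(\xi_t - K)^+] \;\le\; \sup_{t\ge 0} \E[\xi_t \chi_{\{\xi_t > K\}}] \;=:\; \varepsilon(K),
\]
and $\varepsilon(K) \to 0$ as $K \to \infty$ by definition of uniform integrability.

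Combining these two estimates via subadditivity of $\limsup$ I would conclude
\[
\limsup_{t\to\infty} \E \xi_t \;\le\; \limsup_{t\to\infty} \E[\xi_t \wedge K] + \limsup_{t\to\infty} \E[(\xi_t - K)^+] \;\le\; \E\!\left[\limsup_{t\to\infty} \xi_t\right] + \varepsilon(K),
\]
and then send $K \to \infty$ to obtain the claimed inequality.

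The one mildly delicate point, which I would not call a serious obstacle, is measurability of $\limsup_{t\to\infty} \xi_t$ when $t$ ranges over a continuum. This is handled in the standard way by choosing a sequence $t_n \to \infty$ along which $\E \xi_{t_n} \to \limsup_{t\to\infty} \E \xi_t$, applying the above argument to the countable family $\{\xi_{t_n}\}_{n\ge 1}$, and finally noting that $\limsup_{n\to\infty} \xi_{t_n} \le \limsup_{t\to\infty} \xi_t$ pointwise, so monotonicity of the expectation delivers the desired bound on the right-hand side.
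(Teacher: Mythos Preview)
Your proof is correct and follows essentially the same approach as the paper: truncate at level $K$, apply the reverse Fatou lemma to the bounded part, control the tail via uniform integrability, and let $K\to\infty$ (the paper uses the decomposition $\xi_t = \xi_t\chi_{\{\xi_t\le K\}} + \xi_t\chi_{\{\xi_t>K\}}$ rather than $(\xi_t\wedge K)+(\xi_t-K)^+$, but this is immaterial). Your additional remark on the measurability of the continuum $\limsup$ is a nice touch that the paper omits.
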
}
\begin{proof}
    For all $\varepsilon>0$, take $K$ such that \begin{equation*}
        \sup_{t\ge0}\E(\xi_t\chi_{\{\xi_t>K\}})<\varepsilon.
    \end{equation*}
    Then \begin{align*}
        \limsup_{t\to\infty}\E \xi_t\le &\limsup_{t\to\infty}\E \xi_t\chi_{\{\xi_t\le K\}}+\varepsilon\\
        \le& \E \limsup_{t\to\infty}\xi_t\chi_{\{\xi_t\le K\}} +\varepsilon\\
        \le&\limsup_{t\to\infty}\xi_t +\varepsilon.
    \end{align*}
    The second inequality is due to the upper bound of $\{\xi_t\chi_{\{\xi_t\le K\}}\}$ and Fatou's lemma. Since $\varepsilon$ is arbitrary, we get the desired results.
\end{proof}
 \section{Generalized Birkhoff's theorem}
 \begin{lemma}[Generalized Birkhoff's ergodic theorem]
\label{lem:generalized_Birk}
    Let $\Phi$ be a Markov process on a probability space $(\Omega,\mathcal{F},\P)$ with unique invariant measure $\mu$. Let $\P^\mu$ be the distribution of $\Phi$ with initial distribution $\mu$. Suppose $f\ge0$ satisfying $\langle f,\mu\rangle=\infty$ then $\P^\mu$-a.s. $\omega\in\Omega$. 
    \begin{equation*}
        \lim\limits_{t\to\infty}\frac{1}{t}\int_0^t f(\Phi_s(\omega))\d s =\infty.
    \end{equation*}
\end{lemma}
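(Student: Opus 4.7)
The plan is to reduce the divergent case to the classical (continuous-time) Birkhoff ergodic theorem via monotone truncation. First, I note that the uniqueness of the invariant measure $\mu$ forces $\mu$ to be ergodic: otherwise there would exist an invariant Borel set $A$ with $0 < \mu(A) < 1$, and then the conditionals $\mu(\,\cdot\,|\,A)$ and $\mu(\,\cdot\,|\,A^c)$ would be two distinct invariant measures, contradicting uniqueness. Consequently the stationary process $(\Phi,\P^\mu)$ corresponds to an ergodic measure-preserving flow on path space, and the standard continuous-time Birkhoff theorem applies for every $g\in L^1(\mu)$.

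For each $N\in\mathbb{N}$, set $f_N:=f\wedge N$. Then $f_N$ is nonnegative and bounded, hence $f_N\in L^1(\mu)$ with $\langle f_N,\mu\rangle<\infty$. The continuous-time Birkhoff theorem provides a set $A_N\subset\Omega$ with $\P^\mu(A_N)=1$ on which
\[
\lim_{t\to\infty}\frac{1}{t}\int_0^t f_N(\Phi_s(\omega))\,\d s=\langle f_N,\mu\rangle.
\]
Taking the countable intersection $A:=\bigcap_{N\ge1}A_N$ gives a single set of full $\P^\mu$-measure on which the above convergence holds simultaneously for every $N$.

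For any $\omega\in A$ and any $N\ge1$, the pointwise inequality $f\ge f_N$ yields
\[
\liminf_{t\to\infty}\frac{1}{t}\int_0^t f(\Phi_s(\omega))\,\d s\;\ge\;\lim_{t\to\infty}\frac{1}{t}\int_0^t f_N(\Phi_s(\omega))\,\d s\;=\;\langle f_N,\mu\rangle.
\]
By the monotone convergence theorem, $\langle f_N,\mu\rangle\uparrow\langle f,\mu\rangle=\infty$ as $N\to\infty$. Letting $N\to\infty$ in the previous display therefore forces the Cesàro average to diverge to $+\infty$ on $A$, which is exactly the claim.

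The only nontrivial conceptual point, and the one I would expect to be the main obstacle, is the invocation of the continuous-time Birkhoff theorem for the stationary Markov process $(\Phi,\P^\mu)$: this requires realizing $\P^\mu$ as an invariant measure for the shift semigroup on path space and verifying that uniqueness (hence ergodicity) of $\mu$ as an invariant measure of the Markov semigroup transfers to ergodicity of the shift on path space. This transfer is standard (see, e.g., the continuous-time version of Doob's theorem on stationary Markov measures), so the remainder of the argument reduces to the routine truncation-and-monotonicity computation above.
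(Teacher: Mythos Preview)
Your proof is correct and follows essentially the same route as the paper: truncate $f$ by $f_N=f\wedge N$ (the paper uses $f\chi_{\{f\le n\}}$, an inessential difference), apply Birkhoff's theorem to each $f_N$, intersect the full-measure sets, and then push $N\to\infty$ using monotone convergence and $f\ge f_N$. Your additional remark that uniqueness of $\mu$ implies ergodicity is a welcome clarification that the paper's proof leaves implicit.
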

\begin{proof}
    Let $f_n(x)=f(x)\chi_{|f(x)|\le n}$. Then $\|f_n\|_{\infty}\le n$. By Birkhoff's ergodic theorem, for any $n\in\N_+$, there exist $\Omega_n\in\Omega$ with $\P^{\mu}(\Omega_n)=1$ such that for all $\omega\in\Omega_n$,
        \begin{equation*}
        \lim\limits_{t\to\infty}\frac{1}{t}\int_0^t f_n(\Phi_s(\omega))\d s =\langle f_n,\mu\rangle.
    \end{equation*}
    Let $\Omega'=\bigcap_{n\in\N_+}\Omega_n$, then $\P^\mu(\Omega')=1$ and for all $\omega\in\Omega'$,
    \begin{equation*}
        \lim\limits_{t\to\infty}\frac{1}{t}\int_0^t f_n(\Phi_s(\omega))\d s =\langle f_n,\mu\rangle,\quad \forall\, n\in\N_+.
    \end{equation*}

    Now, fix $\omega\in\Omega'$. By monotone convergence theorem, $\lim\limits_{n\to\infty}\langle f_n,\mu\rangle=\langle f,\mu\rangle=\infty $. Hence for any $M>0$, there exists $N$ such that for all $n\ge N$, $\langle f_n,\mu\rangle\ge2M$.

    Then for $n=N$, there exists $T(\omega,N)$ such that for all $t\ge T(\omega, N)$, \begin{equation*}
        \frac{1}{t}\int_0^t f_{N}(\Phi_s(\omega))\d s \ge\frac{1}{2}\langle f_{N},\mu\rangle\ge M.
    \end{equation*}
    Since $0\le f_{N}\le f$, 
    \begin{equation*}
        \frac{1}{t}\int_0^t f(\Phi_s(\omega))\d s\ge \frac{1}{t}\int_0^t f_{N}(\Phi_s(\omega))\d s \ge M,\quad \forall\, t\ge T(\omega,N).
    \end{equation*}
    Therefore, 
    \begin{equation*}
         \liminf\limits_{t\to\infty}\frac{1}{t}\int_0^t f(\Phi_s(\omega))\d s \ge M.
    \end{equation*}
    Since $M$ is arbitrary, we have
    \begin{equation*}
         \lim\limits_{t\to\infty}\frac{1}{t}\int_0^t f(\Phi_s(\omega))\d s =\infty, \quad \omega\in\Omega'.
    \end{equation*}
    This completes the proof.
\end{proof}

\small    
\bibliographystyle{abbrv}
\bibliography{References}
\end{document}